\documentclass[12pt]{amsart}

\usepackage{amsmath,amssymb,amsthm}
\usepackage{bm,bbm,comment, mathtools}
\usepackage{graphicx}
\usepackage[top=30truemm, bottom=30truemm,left=25truemm,right=25truemm]{geometry}

\thickmuskip=1.0\thickmuskip
\medmuskip=0.8\medmuskip
\thinmuskip=0.8\thinmuskip
\arraycolsep=0.3\arraycolsep
\AtBeginDocument{
  \abovedisplayskip     =0.5\abovedisplayskip
  \abovedisplayshortskip=0.5\abovedisplayshortskip
  \belowdisplayskip     =0.5\belowdisplayskip
  \belowdisplayshortskip=0.5\belowdisplayshortskip}

  \theoremstyle{definition}
  \newtheorem{dfn}{Definition}[section]
  \newtheorem*{cond}{Condition (A)}
  \newtheorem{rem}[dfn]{Remark}
  \theoremstyle{plain}
  \newtheorem{prop}[dfn]{Proposition}
  \newtheorem{conj}[dfn]{Conjecture}
  \newtheorem{lem}[dfn]{Lemma}
  \newtheorem{thm}[dfn]{Theorem}
  \newtheorem*{thm*}{Theorem}
  \newtheorem*{lem*}{Lemma}
  \newtheorem{cor}[dfn]{Corollary}

\renewcommand{\labelenumi}{(\theenumi)}

\numberwithin{equation}{section}
\begin{document}

\title[Kurokawa-Mizumoto congruence and differential operators]{Kurokawa-Mizumoto congruence and differential operators on automorphic forms}
\author{Nobuki TAKEDA}
\address{Department of Mathematics, Graduate School of Science, Kyoto University, Kyoto 606-8502, Japan}
\email{takeda.nobuki.72z@st.kyoto-u.ac.jp}
\date{}
  \maketitle
  \begin{abstract}
    We give the sufficient conditions for the vector-valued Kurokawa-Mizumoto congruence
    related to the Klingen-Eisenstein series to hold.
    And we give a reinterpretation for differential operators on automorphic forms by the representation theory.
  \end{abstract}
  
  \section{Introduction}
  Let $k$, $\nu$ be even integers,
  and $f(z) =\sum_{n>0}a(n,f)\mathbf{e}(nz)$ be a (degree 1) normalized Hecke eigenform of weight $k+\nu$.
  Here $\mathbf{e}(x) \coloneqq  e^{2\pi i x}$ for $x \in \mathbb{C}$.
  Then, it is conjectured that for a sufficiently large prime ideal $\mathfrak{p}$ of Hecke field $\mathbb{Q}(f)$ for $f$
  associated with the special value of the L-function $L(k-1,f,\mathrm{St})$,
  there is a some degree 2 Siegel cusp form $F$ of weight $\det^k\otimes\mathrm{Sym}^\nu$
  and that the following congruence holds:
  \[\lambda_F(p)=(1+p^{k-2})a(p,f) \mod \mathfrak{p}',\]
  where $\lambda_F(p)$ is the eigenvalue of the Hecke operator $T(p)$ on $F$,
  and $\mathfrak{p}'$ is a prime ideal of $\mathbb{Q}(f,F)=\mathbb{Q}(f)\cdot\mathbb{Q}(F)$ lying above $\mathfrak{p}$.
  Let $[f]^{(k+\nu,k)}$ be a Klingen-Eisenstein lift of $f$ to a degree 2, weight $\det^k\otimes\mathrm{Sym}^\nu$.
  Since $\lambda_{[f]^{(k+\nu,k)}}=(1+p^{k-2})a(p,f)$ due to Arakawa \cite{arakawa1983vector},
  This congruence is between a Klingen-Eisenstein lift and another Siegel modular form.
  Congruences of this type are proved initially by Kurokawa \cite{kurokawa1979congruences} for $k= 20, \nu=0$,
  then by Mizumoto \cite{mizumoto1986congruences} for $k=22, \nu=0$,
   and by Satoh \cite{satoh1986certain} for $\nu=2$, and sufficient conditions examined by Katsurada-Mizumoto \cite{katsurada2012congruences} for $\nu=0$.
  We call the congruences of this type Kurokawa-Mizumoto congruences.
  
  We give sufficient conditions for the vector-valued Kurokawa-Mizumoto congruence.
  This result proves Conjecture 10.6 of Begstr\"{o}m, Faber and van der Geer \cite{bergstrom2014siegel}.
  \begin{thm*}[Theorem~\ref{thm:main}]
      Let $k,\nu$ be positive even integers with $k\geq 6$,
      $f_{1,1}=f, \ldots, f_{1,d_1}$ be a basis of $S_{k+\nu}\left(\Gamma_1\right)$ consist of normalized Hecke eigenforms,
      $p$ be a prime number of $\mathbb{Q}$
      and $A \in H_{2}(\mathbb{Z})_{>0}$ be a half-integral positive definite matrix of degree $2$.
      Suppose that $A$ and $p$ satisfy the following conditions:
      \begin{enumerate}
        \item $\mathrm{ord}_p(\mathbb{L}(k-1,f,\mathrm{St}))=:\alpha>0$,
        \item $\mathrm{ord}_p(\mathcal{C}_{4,k}(f)a(A,[f]^{(k+\nu,k)}))= 0$,
        \item $p\geq2(k+\nu)-3$.
      \end{enumerate}
      Then, there is a Hecke eigenform $G \in M_{\rho_2}(\Gamma_2)$
       such that G is not a scalar multiple of $\left[f\right]^{(k+\nu,k)}$ and
      \[\left[f\right]^{(k+\nu,k)}\equiv_{ev} G \mod \mathfrak{p}\]
      for some prime ideal $\mathfrak{p}\mid p $ of $\mathbb{Q}(G)$.
      If $\mathrm{ord}_p(\gamma_1) = 0$, Condition (3) can be changed to Condition (3)':
      \begin{enumerate}
        \renewcommand{\labelenumi}{(\arabic{enumi})'}
        \setcounter{enumi}{2}
        \item $p\geq \mathrm{max}\left\{2k,k+\nu-2\right\}$.
      \end{enumerate}
    
      If moreover $k\geq 6$ and $p$ satisfy the following conditions:
      \begin{enumerate}
        \renewcommand{\labelenumi}{(\arabic{enumi})}
        \setcounter{enumi}{3}
        \item $\mathrm{ord}_p(\mathbb{L}(k-1,f_{1,i},\mathrm{St}))\leq 0 \ (2\leq i \leq d_1)$,
        \item $p$ is coprime with every $\mathfrak{A}(f_{r,i})$ ($1\leq r \leq2$, $1\leq i \leq d_r$).
      \end{enumerate} 
      there is a Hecke eigenform $G \in S_{\rho_2}(\Gamma_2)$
       such that G is not a scalar multiple of $\left[f\right]^{(k+\nu,k)}$ and 
      \[\left[f\right]^{(k+\nu,k)}\equiv_{ev} G \mod \mathfrak{p}^\alpha\]
      for some prime ideal $\mathfrak{p}\mid p $ of $\mathbb{Q}(G)$.
  \end{thm*}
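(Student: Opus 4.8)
The plan is to identify the algebraic standard $L$-value $\mathbb{L}(k-1,f,\mathrm{St})$ with the congruence number of the Klingen-Eisenstein eigensystem and then run the Ribet-Katsurada-Mizumoto congruence-module argument in the vector-valued setting. Write $E:=[f]^{(k+\nu,k)}$ and let $\phi\colon\mathbb{T}\to\mathbb{Q}(f)$ be its Hecke eigencharacter, so that $\phi(T(p))=(1+p^{k-2})a(p,f)$ by Arakawa's formula. First I would set $\mathcal{E}:=\mathcal{C}_{4,k}(f)\,E$ and prove that $\mathcal{E}\in M_{\rho_2}(\Gamma_2)$ has $\mathfrak{p}$-integral Fourier coefficients, with $a(A,\mathcal{E})$ a $\mathfrak{p}$-unit. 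The unit statement is exactly Condition (2). The integrality of every Fourier coefficient is where Condition (3), $p\geq 2(k+\nu)-3$ (resp.\ Condition (3)$'$ when $\mathrm{ord}_p(\gamma_1)=0$), is needed: the bound forces the generalized Bernoulli numbers and the critical values of $f$ occurring in the Fourier-coefficient formula for the Klingen-Eisenstein series to be $\mathfrak{p}$-integral, and it controls the denominators introduced by the $\mathrm{Sym}^\nu$-valued differential operator that builds $E$ from a scalar Eisenstein series; when $\gamma_1$ is already a $p$-unit these denominators are milder and a smaller prime suffices.

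The heart of the argument is the Fourier-coefficient identity, to be imported from the earlier sections, that exhibits $\mathbb{L}(k-1,f,\mathrm{St})$ in the non-degenerate (rank $2$) Fourier coefficients of the analytically normalized Eisenstein series, the one with $\Phi(E)=f$. This identity is obtained by differentiating a Garrett-type pullback of a higher-degree Siegel Eisenstein series into the weight $\det^k\otimes\mathrm{Sym}^\nu$; it is precisely the place where the differential-operator machinery of the paper is used. Granting it, Conditions (1) and (2) together pin down $\mathrm{ord}_p(\mathcal{C}_{4,k}(f))=\alpha$ and force $\Phi(\mathcal{E})\equiv 0\bmod\mathfrak{p}$: the renormalization that makes the bulk coefficients $\mathfrak{p}$-units multiplies the boundary datum $\Phi(E)=f$ by a constant of positive valuation. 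Thus $\mathcal{E}\bmod\mathfrak{p}$ is a nonzero element of $\ker(\Phi\bmod\mathfrak{p})$ carrying the Eisenstein eigensystem $\phi\bmod\mathfrak{p}$, while no genuine Eisenstein series lies in $\ker\Phi$. A linear-algebra argument on a Hecke-stable $\mathbb{Z}_{(p)}$-lattice — comparing the Eisenstein and cuspidal constituents of the reduction — then produces a Hecke eigenform $G\in M_{\rho_2}(\Gamma_2)$, linearly independent from $E$, with $E\equiv_{ev}G\bmod\mathfrak{p}$; by equivariance of $\Phi$ and the Hecke operators under reduction this gives $\lambda_G(p)\equiv(1+p^{k-2})a(p,f)$.

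For the sharper conclusion I would upgrade the congruence from $\mathfrak{p}$ to $\mathfrak{p}^\alpha$ and force $G$ to be cuspidal, using Conditions (4) and (5). Condition (4), $\mathrm{ord}_p(\mathbb{L}(k-1,f_{1,i},\mathrm{St}))\leq 0$ for $2\leq i\leq d_1$, singles out $f=f_{1,1}$ as the only degree-$1$ eigenform whose standard $L$-value carries the $p$-divisibility, so the whole congruence number $p^\alpha$ must be absorbed on the cuspidal side rather than shared among Eisenstein eigensystems; this is what promotes $\Phi(\mathcal{E})\equiv 0\bmod\mathfrak{p}$ to an honest congruence modulo $\mathfrak{p}^\alpha$. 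Condition (5), coprimality of $p$ with every $\mathfrak{A}(f_{r,i})$, guarantees that the relevant Hecke-module lattices are $\mathfrak{p}$-maximal — equivalently that the cuspidal Hecke algebra is \'{e}tale at $\mathfrak{p}$ away from the Eisenstein ideal — so that there are no spurious congruences among the $f_{r,i}$ to dilute the valuation, and the eigenform produced can be taken genuinely cuspidal, $G\in S_{\rho_2}(\Gamma_2)$.

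The step I expect to be the main obstacle is the combination of the integrality in the first paragraph with the identity in the second, in the genuinely vector-valued case. One must prove the precise relation tying $\mathbb{L}(k-1,f,\mathrm{St})$ to $a(A,E)$ and to $\Phi(E)$, and then control every period and every denominator introduced by the $\mathrm{Sym}^\nu$-valued differential operator, so that the divisibility of the suitably normalized algebraic $L$-value passes without loss to a congruence of $\mathfrak{p}$-integral modular forms. This is exactly where the differential-operator calculations of the earlier sections and the bookkeeping governed by $\gamma_1$, $\mathcal{C}_{4,k}(f)$ and the lower bound on $p$ become indispensable; the scalar case $\nu=0$ of Katsurada-Mizumoto is recovered when the differential operator degenerates to the identity.
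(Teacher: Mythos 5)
Your outline correctly locates the source of the $L$-value (the differentiated Garrett pullback) and the roles of conditions (2) and (3)/(3)$'$, but the load-bearing step of your first paragraph has a genuine gap. You propose to prove that $\mathcal{E}=\mathcal{C}_{4,k}(f)\,[f]^{(k+\nu,k)}$ has $\mathfrak{p}$-integral Fourier coefficients under conditions (1)--(3) alone. The only available control on the denominators of Klingen--Eisenstein coefficients is the Mizumoto-type bound (Proposition~\ref{prop:klingen-integrality}), which says $a(A,[f]^{\rho_2})\in V_{2,\mathbf{k}}\bigl(\frac{Z(4,k)}{\gamma_1\mathcal{C}_{4,k}(f)}\mathfrak{A}(f)^{-1}\mathbb{Z}_{(p)}\bigr)$; the factor $\mathfrak{A}(f)^{-1}$ can only be discarded under the coprimality hypothesis on $\mathfrak{A}(f)$, which is condition (5) and is \emph{not} assumed in the first half of the theorem. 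The paper avoids ever isolating $[f]$: the integral object is the pullback Fourier coefficient $g_A$ itself, whose $\mathbb{Z}_{(p)}$-integrality (Corollary~\ref{cor:g_N rationality}) comes from Fourier coefficients of the normalized Siegel--Eisenstein series and costs nothing; the congruence is then extracted by Lemma~\ref{lem:cong} from the decomposition in Corollary~\ref{cor:g_N}, because conditions (1)--(2) force $\mathrm{ord}_p\bigl(\zeta(3-2k)\gamma_1\mathcal{C}_{4,k}(f)\,a(A,[f]^{(k+\nu,k)})\,a(A,[f]^{(k+\nu,k)})\bigr)=-\alpha<0$ while the total sum is $p$-integral. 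The denominator thus sits in one \emph{component coefficient} of an integral form, not in a renormalized Eisenstein series. Your subsequent lattice step has the same flavor of unproved input: to conclude from $\overline{\mathcal{E}}\in\ker(\Phi\bmod\mathfrak{p})$ that the eigensystem occurs cuspidally, you need $\ker(\overline{\Phi})$ to equal the reduction of $S_{(k+\nu,k)}(\Gamma_2)(\mathbb{Z}_{(p)})$, i.e. $p$ prime to the index of $N_{(k+\nu,k)}(\Gamma_2)(\mathbb{Z}_{(p)})\oplus S_{(k+\nu,k)}(\Gamma_2)(\mathbb{Z}_{(p)})$ in the full lattice --- again an $\mathfrak{A}$-type condition established nowhere. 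A telltale sign: if your argument ran as stated, it would already produce a \emph{cuspidal} $G$ under (1)--(3), strictly stronger than the theorem's first assertion (whose $G$ may well be another Klingen lift $[f_{1,i}]^{(k+\nu,k)}$, and whose cuspidality the paper only obtains separately, in Theorem~\ref{thm:cusp} or under (4)--(5)).

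For the refined congruence, your "the congruence number must be absorbed on the cuspidal side" is a heuristic, not a proof, and it also misreads condition (4): in the paper, (4) together with (5) is used to make the terms $\gamma_1\mathcal{C}_{4,k}(f_{1,i})[f_{1,i}]^{\rho_2}a(A,[f_{1,i}]^{\rho_2})$ for $i\geq 2$ $p$-integral, so that they can be stripped from the pullback congruence, leaving a relation between $\gamma_1\mathcal{C}_{4,k}(f)[f]^{\rho_2}a(A,[f]^{\rho_2})$ and the cuspidal sum modulo $\mathbb{Z}_{(p)}$; after normalizing the cuspidal basis this pins down $\mathrm{ord}_\mathfrak{P}\bigl(\gamma_2\mathcal{C}_{4,k}(f_{2,1})a(A,f_{2,1})^2\bigr)\leq-\alpha$. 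The quantitative drop to $\mathfrak{p}^\alpha$ then comes from a concrete mechanism absent from your sketch: act by $T-\lambda_{[f]^{\rho_2}}(T)$ to annihilate the Eisenstein component, multiply by a $p$-unit to land in $S_{(k+\nu,k)}(\Gamma_2)(\mathbb{Z})$, and apply the vector-valued integrality lemma (Lemma~\ref{lem:vector-integrality}, the analogue of Lemma~\ref{lem:integrality}) with $F=f_{2,1}$ to convert the valuation deficit into $\lambda_{f_{2,1}}(T)\equiv\lambda_{[f]^{\rho_2}}(T)\bmod\mathfrak{p}^\alpha$ for all $T$. Without this (or an equivalent congruence-module computation), the second half of your proposal does not close.
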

  The former part  of the proof of this theorem follows Atobe-Chida-Ibukiyama-Katsurada-Yamauchi's method \cite{atobe2023harder},
  and the key to the proof of the main theorem is the following lemma \cite[Lemma 6.10]{atobe2023harder}.
  \begin{lem*}[Lemma~\ref{lem:cong}]
    Let $F_1,\ldots,F_d$ be Hecke eigenforms in $M_{\mathbf{k}}(\Gamma_n)$ linearly independent over $\mathbb{C}$.
    Let $K$ be the composite field $\mathbb{Q}(F_1),\ldots,\mathbb{Q}(F_d)$, $\mathcal{O}$ the ring of integers in $K$
    and $\mathfrak{p}$ a prime ideal of $K$. Let $G(Z) \in(M_{\rho_n}(\Gamma_n)\otimes V_{n,\mathbf{k}})(\mathcal{O}_{(\mathfrak{p})})$ and assume the following conditions
    \begin{enumerate}
      \item $G$ is expressed as
      \[ G(Z)=\sum_{i=1}^d c_i F_i(Z)\]
      with $c_i \in V_{n,\mathbf{k}}$.
      \item $c_1a(A,F_1) \in (V_{n,\mathbf{k}} \otimes V_{n,\mathbf{k}})(K)$ and $\mathrm{ord}_\mathfrak{p}(c_1a(A,F_1))<0$
      for some $A \in H_n(\mathbb{Z})$.
    \end{enumerate}
    Then there exists $i\neq 1$ such that
    \[F_i \equiv_{ev}F_1 \mod \mathfrak{p}.\]
  \end{lem*}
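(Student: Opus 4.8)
The plan is to argue by contradiction, exploiting the standard principle that a form with $\mathfrak{p}$ in the denominator of one eigencomponent, sitting inside a globally $\mathfrak{p}$-integral form, must be balanced by a congruent companion. Suppose that $F_i \not\equiv_{ev} F_1 \pmod{\mathfrak{p}}$ for every $i \neq 1$. Writing $\lambda_i \colon \mathbb{T} \to \mathcal{O}$ for the system of Hecke eigenvalues attached to $F_i$, where $\mathbb{T}$ is the (commutative) Hecke algebra acting on $M_{\mathbf{k}}(\Gamma_n)$, this assumption says precisely that the reductions $\bar{\lambda}_1$ and $\bar{\lambda}_i$ modulo $\mathfrak{p}$ are distinct for each $i \neq 1$. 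In particular, for each such $i$ there is a Hecke operator $t_i \in \mathbb{T}$ with $\lambda_1(t_i) \not\equiv \lambda_i(t_i) \pmod{\mathfrak{p}}$.

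Next I would build a projector onto the $F_1$-eigenline. Since the relevant Hecke eigenvalues are algebraic integers, hence $\mathfrak{p}$-integral, the element $s_i \coloneqq t_i - \lambda_i(t_i)\,\mathrm{id}$ (taken with coefficients in $\mathcal{O}_{(\mathfrak{p})}$) satisfies $\lambda_i(s_i) = 0$ while $\lambda_1(s_i) = \lambda_1(t_i) - \lambda_i(t_i)$ is a $\mathfrak{p}$-unit. Setting $s \coloneqq \prod_{i=2}^d s_i$ and using commutativity of $\mathbb{T}$, one obtains $\lambda_j(s) = 0$ for all $j \neq 1$ and $\lambda_1(s) \in \mathcal{O}_{(\mathfrak{p})}^\times$.

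I would then apply $s$ to $G$. Because $s$ acts by the scalar $\lambda_i(s)$ on $F_i$, condition (1) gives
\[ s\cdot G = \sum_{i=1}^d \lambda_i(s)\, c_i F_i = \lambda_1(s)\, c_1 F_1 . \]
On the one hand, $G$ is $\mathfrak{p}$-integral and $s$ is a $\mathfrak{p}$-integral Hecke operator preserving $\mathfrak{p}$-integrality of Fourier expansions, so $s\cdot G$ is $\mathfrak{p}$-integral, and in particular its $A$-th Fourier coefficient lies in $(V_{n,\mathbf{k}} \otimes V_{n,\mathbf{k}})(\mathcal{O}_{(\mathfrak{p})})$. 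On the other hand, that coefficient equals $\lambda_1(s)\, c_1 a(A,F_1)$, and since $\lambda_1(s)$ is a $\mathfrak{p}$-unit, condition (2) gives $\mathrm{ord}_\mathfrak{p}(\lambda_1(s)\, c_1 a(A,F_1)) = \mathrm{ord}_\mathfrak{p}(c_1 a(A,F_1)) < 0$, a contradiction. Hence $F_i \equiv_{ev} F_1 \pmod{\mathfrak{p}}$ for some $i \neq 1$.

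The main obstacle I anticipate is justifying that $s$ preserves $\mathfrak{p}$-integrality. A Hecke operator at the residue characteristic $p$ of $\mathfrak{p}$ can introduce $p$ in denominators, so the separation in the second step must be realized using only Hecke operators $t_i$ away from $p$. This requires knowing that the prime-to-$p$ parts of the eigensystems of $F_1$ and $F_i$ are already incongruent modulo $\mathfrak{p}$, which is exactly where the precise meaning of $\equiv_{ev}$ (congruence of the Hecke eigenvalues at good primes away from $p$) must be pinned down; this is the delicate point, and everything else is the formal projector computation above.
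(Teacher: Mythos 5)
Your projector argument is exactly the proof behind the paper's citation (the paper does not reprove the lemma but refers to \cite{atobe2023harder} Lemma 6.10, proved as in \cite{katsurada2008congruence} Lemma 5.1, which proceeds by choosing $T_i$ separating $\lambda_1$ from $\lambda_i$ mod $\mathfrak{p}$, applying $\prod_{i\geq 2}(T_i-\lambda_i(T_i))$ to $G$, and contradicting the $\mathfrak{p}$-integrality of the $A$-th Fourier coefficient), so your attempt is correct and takes essentially the same route. The only remark worth making is that the worry in your final paragraph is unnecessary here: $\equiv_{ev}$ is defined via the full integral Hecke algebra $\mathcal{H}_n$, and with the paper's normalization every $T\in\mathcal{H}_n$ — including operators at the residue characteristic — preserves $\mathfrak{p}$-integrality of Fourier coefficients, a fact the paper itself invokes verbatim in the proof of Theorem~\ref{thm:main}, so no restriction to prime-to-$p$ operators is needed.
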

  
  The latter part of the proof of the main theorem is due to Katsurada-Mizumoto's method \cite{katsurada2012congruences}.
  
  To obtain the main theorem,
  we take as $G(Z)$ in this corollary a function constructed from the Siegel-Eisenstein series $E_{n, \mathbf{k}}(Z,s)$
  and using the pullback formula (Theorem \ref{thm:eisen}), 
  we show that a decomposition of $G(Z)$ that satisfies the conditions of the corollary can be obtained.
  We also showed in Theorem~\ref{thm:cusp}, using Galois representations,
  the conditions under which G in Theorem~\ref{thm:main} can be taken as a cusp form.
  
  In the course of the proof,
  we will discuss using differential operators that preserve the automorphic properties,
  which are deeply investigated by Ibukiyama \cite{ibukiyama1999differential,Ibukiyama2020generic}.
  In this paper, we also reinterpreted this differential operator by using Howe duality in representation theory.
  The Proof is given using Ban's method \cite{ban2006rankin}.
  \begin{thm*}[Theorem~\ref{thm:mainweight}]
      Let F be a holomorphic automorphic form of weight $\tfrac{k}{2}\mathbbm{1}_{n}$ for $\Gamma_n$
      and $D \in \Delta_{n,k}$ be a Young diagram  such that $l(D)\leq\min\{n_1,\ldots,n_d\}$. 
      We put $\partial_Z=\left(\frac{1+\delta_{i,j}}{2}\frac{\partial}{\partial z_{i,j}}\right)$.
      We denote by $\mathrm{Res}$ the restriction of a function
      on $\mathbb{H}_n$ to $\mathbb{H}_{n_1} \times\cdots\times \mathbb{H}_{n_d}$.
      Then we have
        $\mathrm{Res}\left(\Phi_{h}(\partial_Z)F\right)\in \bigotimes_{s=1}^{d}  M_{\tau_{n_s,k}(D)}(\Gamma_{n_s})$ 
         for any $h \in$ {\small $\left(\left(\bigotimes_{s=1}^{d} \mathfrak{H}_{n_s,k}(D)\right)^{\mathrm{O}_k}
        \otimes \left(\bigotimes_{s=1}^{d} U^*_{\tau_{n_s,k}(D)}\right)\right)^{\widetilde{K'}}$}.
  \end{thm*}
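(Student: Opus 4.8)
The plan is to realize the operator $\Phi_h(\partial_Z)$ inside the Fock model of the Weil representation and to read off the automorphy directly from Howe duality, following Ban's method \cite{ban2006rankin}. First I would set up the oscillator representation $\omega$ of (the metaplectic cover of) $\mathrm{Sp}_{2n}(\mathbb{R})$ attached to the positive-definite quadratic space of rank $k$, realized on the holomorphic Fock space of polynomials on $M_{k,n}$; in this model the complexified maximal compact $\mathrm{GL}_n(\mathbb{C})$ and the compact group $\mathrm{O}_k$ act as a Howe dual pair, and the Kashiwara--Vergne decomposition writes the pluriharmonics as a multiplicity-free sum $\bigoplus_{D \in \Delta_{n,k}} \tau_{n,k}(D) \boxtimes \sigma_D$. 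The space $\mathfrak{H}_{n_s,k}(D)$ is exactly the $D$-isotypic harmonic component for degree $n_s$, and the hypothesis $l(D) \le \min\{n_1,\dots,n_d\}$ guarantees that $\tau_{n_s,k}(D)$ is nonzero on every factor. A holomorphic form $F$ of weight $\tfrac{k}{2}\mathbbm{1}_n$ corresponds to the $\mathrm{O}_k$-spherical lowest weight vector, so that differentiating by $\partial_Z$ against a harmonic polynomial raises the $\mathrm{O}_k$-type while producing the paired $\mathrm{GL}_n$-type.

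Next I would exploit the see-saw coming from the block embedding $\prod_s \mathrm{Sp}_{2n_s}(\mathbb{R}) \hookrightarrow \mathrm{Sp}_{2n}(\mathbb{R})$, which is the representation-theoretic shadow of the restriction $\mathrm{Res}$. Under this embedding $\omega$ factors as $\bigotimes_s \omega^{(s)}$, each $\omega^{(s)}$ decomposing under $\mathrm{GL}_{n_s}(\mathbb{C}) \times \mathrm{O}_k$, while the orthogonal group of the original pair sits diagonally inside $\prod_s \mathrm{O}_k$. The vector $h$ is by definition diagonally $\mathrm{O}_k$-invariant --- it lies in $\bigl(\bigotimes_s \mathfrak{H}_{n_s,k}(D)\bigr)^{\mathrm{O}_k}$ --- paired against $\bigotimes_s U^*_{\tau_{n_s,k}(D)}$ and invariant under $\widetilde{K'}$. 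I would show that such an $h$ is precisely the datum of a $\widetilde{K'}$-equivariant embedding of $\bigotimes_s \tau_{n_s,k}(D)$ into $\bigotimes_s \mathfrak{H}_{n_s,k}(D)$, i.e.\ a $\prod_s U(n_s)$-equivariant projection onto the $D$-harmonic components, and that the symbol substitution $X \mapsto \partial_Z$ converts it into the differential operator $\Phi_h(\partial_Z)$.

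The automorphy is the real content, since holomorphy of $\Phi_h(\partial_Z)F$ is automatic --- the operator involves only the holomorphic derivatives $\partial/\partial z_{ij}$. Here the pluriharmonicity built into $\mathfrak{H}_{n_s,k}(D)$ is essential: it is exactly the condition cancelling the lower-order correction terms that arise when one commutes $\Phi_h(\partial_Z)$ past the slash action of each $\mathrm{Sp}_{2n_s}$, so that only the leading term, carrying $\tau_{n_s,k}(D)$, survives. The diagonal $\mathrm{O}_k$-invariance of $h$ guarantees that the output is insensitive to the auxiliary orthogonal variable and descends to a well-defined function on $\mathbb{H}_{n_1}\times\cdots\times\mathbb{H}_{n_d}$, while the $\widetilde{K'}$-invariance matches the action of the maximal compact of $\prod_s \mathrm{Sp}_{2n_s}(\mathbb{R})$ on the result with $\bigotimes_s \tau_{n_s,k}(D)$. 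Consequently $\mathrm{Res}(\Phi_h(\partial_Z)F)$ transforms under $\Gamma_{n_1}\times\cdots\times\Gamma_{n_d}$ with the vector-valued automorphy factor of weight $\tau_{n_s,k}(D)$ on each factor, placing it in $\bigotimes_s M_{\tau_{n_s,k}(D)}(\Gamma_{n_s})$.

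The main obstacle I expect is this last matching step: translating the abstract $\widetilde{K'}$-invariance of $h$ into the explicit classical automorphy factor $\tau_{n_s,k}(D)(C_s Z_s + D_s)$, with the correct determinant shift by $\tfrac{k}{2}$ absorbed into $\tau_{n_s,k}(D)$. This requires pinning down the intertwiner between the Fock model and the classical holomorphic model of Siegel forms and verifying that the pluriharmonic projection encoded by $h$ genuinely commutes with the slash action of each $\mathrm{Sp}_{2n_s}(\mathbb{R})$ --- the point where Ban's argument, reducing the equivariance to an identity in the universal enveloping algebra acting on lowest weight vectors, carries the weight of the proof.
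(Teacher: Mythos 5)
Your proposal follows essentially the same route as the paper, which likewise implements Ban's method: the Fock-model Weil representation with the Kashiwara--Vergne/Howe decomposition, the identification of $h$ with a $(\mathfrak{g}'_\mathbb{C},\widetilde{K'})$-intertwiner $\bigotimes_{s=1}^{d} L(\tau_{n_s,k}(D)) \to L(\tfrac{k}{2}\mathbbm{1}_n)$ (Lemma~\ref{lem:pluriharmonic}), a group-level statement for $\Phi_h(\pi_n^+)$ (Theorem~\ref{thm:maintype}), and the descent to $\mathbb{H}_n$ via the classical--automorphic dictionary (Proposition~\ref{prop:automisom}) together with Ban's derivative formula (Lemma~\ref{lem:deri}). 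The one nuance is at the step you rightly flag as the main obstacle: in the paper the lower-order terms in $\sqrt{Y}$ are not cancelled directly by pluriharmonicity, but rather the group-level theorem guarantees the output corresponds to a genuinely holomorphic vector-valued form $f$, which forces the non-holomorphic remainder $R$ to vanish and yields $\mathrm{Res}(\Phi_h(\partial_Z)F)=4^{-m}f$.
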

    
  \setcounter{section}{1}
  \vskip\baselineskip
  
  This paper is organized as follows:
  In Section 2, We explain the Siegel modular form, the Hecke operator,
  and the terminology used in this paper.
  In Section 3, we define the Siegel operator and Klingen-Eisenstein series
  that raise and lower the degree of Siegel modular forms and review their properties. 
  In Section 4, we state the results of differential operators
  that preserve the automorphic properties and give an interpretation by representation theory of the differential operator.
  Then,  we explain the Pullback formula, which is the key to the proof of the main theorem.
  In Section 5, we prove the main theorem.
  In Section 6, we consider the conditions that appear in the main theorem (Theorem \ref{thm:main}).
  In section 7, we give explicit examples of Kurokawa-Mizumoto congruences.
  
  \textbf{Notation.}
  For a commutative ring $R$, we denote by $R^\times$ the unit group of $R$.
  We denote by $M_{m,n}(R)$ the set of $m\times n$ matrices with entries in $R$.
  In particular, we denote $M_n(R)\coloneqq M_{n,n}(R)$.
  Let $\mathrm{GL}_n(R) \subset M_n(R)$ be a general linear group of degree $n$.
  For an element $X \in M_n(R)$, we denote by $X>0$ (resp. $X \geq 0$) X is a positive definite matrix
  (resp. a non-negative definite matrix).
  For a subset $S \subset M_n(R)$, we denote by $S_{>0}$ (resp. $S_{\geq 0}$) the  subset of positive definite
  (resp. non-negative definite) matrices in $S$.
  Let $\det^k$ be the 1 dimensional representation of multiplying $k$-square of determinant for $\mathrm{GL}_n(\mathbb{C})$,
  and  $\mathrm{Sym}^\nu$ be the degree $\nu$ symmetric tensor product representation of $\mathrm{GL}_2(\mathbb{C})$.
  
  Let $K$ be an algebraic field, and $\mathfrak{p}$ be a prime ideal of K.
  We denote by $K_\mathfrak{p}$ a $\mathfrak{p}$-adic completion of $K$
  and by $\mathcal{O}_K$ the integer ring of $K$.
  We denote by $\mathrm{ord}_{\mathfrak{p}}(\cdot)$
  the additive valuation of $K_\mathfrak{p}$ normalized so that
  $\mathrm{ord}_{\mathfrak{p}}(\varpi)=1$ for a prime element $\varpi$ of $K_{\mathfrak{p}}$.
  Let $p_\mathfrak{p}$ be the prime number such that $p_\mathfrak{p}\mathbb{Z}=\mathbb{Z}\cap\mathfrak{p}$.
  
  If a group $G$ acts on a set $V$ then, we denote by $V^G$ the $G$-invariant subspace of $V$.

  For a representation $(\tau, U)$, we denote by $(\tau^*, U^*)$ the contragredient representation of $(\tau, U)$.
  
  \section{Siegel Modular Forms}
  Let $\mathbb{H}_n$ be the Siegel upper half space of degree $n$, that is
  \[ \mathbb{H}_n = \{Z \in M_n(\mathbb{C}) | Z= {}^t\!Z=X+\sqrt{-1}Y, \ X,Y\in M_n(\mathbb{R}),\ Y >0\}.\]
  We put $\Gamma_n=\mathrm{Sp}_n(\mathbb{Z})=\{M \in \mathrm{GL}_{2n}(\mathbb{Z}) | MJ_n{}^t\!M=J_n\}$,
  where $J_n=\begin{pmatrix}O_n & I_n \\ -I_n & O_n \\ \end{pmatrix}$.\\
  Let $(\rho, V )$ be a polynomial representation of $\mathrm{GL}_n(\mathbb{C})$ on a finite-dimensional complex vector space $V$,
  and take a Hermitian inner product on V such that
  \[(\rho(g)v,w)=(v,\rho({}^t\!\bar{g})w).\]
  
  For a $V$-valued function $F$ on $\mathbb{H}_n$,
   and $g= \begin{pmatrix}A& B \\C & D \\\end{pmatrix} \in \Gamma_n$, put $j(g,Z)=CZ+D$, and
  \[ F|_{\rho}[g](Z)=\rho(j(g,Z))^{-1}F((AZ+B)(CZ+D)^{-1}) \quad (Z \in \mathbb{H}_n).\]
  We say that $F$ is a (level 1 holomorphic) Siegel modular form of weight $\rho$
  if $F$ is a holomorphic $V$-valued function on $\mathbb{H}_n$ and $F|_{\rho}[g]=F$ for all $g \in \Gamma_n$.
  (When $n=1$, another holomorphic condition is also needed.)
  
  Let $H_n(\mathbb{Z})_{\geq 0}$ (resp. $H_n(\mathbb{Z})_{>0}$) be
  the set of half-integral non-negative definite (resp. positive definite) matrices of degree $n$.
  A modular form $F \in M_\rho\left(\Gamma_n\right)$ has the Fourier expansion
   \[ F(Z)=\sum_{T\in H_n(\mathbb{Z})_{\geq0}}a(F,T)\mathbf{e}(\mathrm{tr}(TZ)),\]
   where $a(F,T)\in V$, $\mathbf{e}(z)=\exp(2\pi\sqrt{-1}z)$, and tr is the trace of $M_n(\mathbb{C})$.
   If $a(F, T)=0$ unless $T$ is positive definite,
   we say that $F$ is a (level 1 holomorphic) Siegel cusp form of weight $(\rho, V)$.
  We denote by $M_\rho\left(\Gamma_n\right)$ (resp. $S_\rho\left(\Gamma_n\right)$)
   a complex vector space of all modular (resp. cusp) forms of weight $\rho$.
  For $F,G \in M_\rho\left(\Gamma_n\right)$, we can define the Petersson inner product as
  \[ (F,G)=\int_{D } \left(\rho(\sqrt{Y})F(Z),\rho(\sqrt{Y})G(Z)\right)\det(Y)^{-n-1}dZ,\]
  where $Y=\Im(Z)$, $\sqrt{Y}$ is a positive definite symmetric matrix such that $\sqrt{Y}^2=Y$,
  and $D $ is a Siegel domain on $\mathbb{H}_n$ for $\Gamma_n.$
  This integral converges if either $F$ or $G$ is a cusp form. 
  
  \vskip\baselineskip
  
  We fix a basis $\left\{ v_1,\ldots v_r \right\}$ of the representation space $V$,
  which $(\rho|_{\mathrm{GL}_n(\mathbb{Z})},\oplus_{i=1}^r\mathbb{Z}v_i)$
  is a representation of $\mathrm{GL}_n(\mathbb{Z})$,
  and we write $V(\mathbb{Z})=\oplus_{i=1}^r\mathbb{Z}v_i$.
  For a subring $R$ of $\mathbb{C}$, we write $V(R)=V(\mathbb{Z})\otimes_{\mathbb{Z}}R$.
  We denote $M_\rho\left(\Gamma_n\right)(R)$ by the set of all Modular form $F \in M_\rho\left(\Gamma_n\right)$
  which Fourier coefficients $a(F,T)$ of $F$ are in $V(R)$,
  and $S_\rho\left(\Gamma_n\right)(R)$ is defined in the same way.
  
  Let $K$ be a number field and $\mathcal{O}$ be the ring of integers in $K$.
  For a prime ideal $\mathfrak{p}$ of $\mathcal{O}$ and $a =\sum_{i=1}^ra_iv_i\in V(R)$,
  we define the order $\mathrm{ord}_\mathfrak{p}(a)$ of $a$ respect to $\mathfrak{p}$ as
  \[\mathrm{ord}_\mathfrak{p}(a)=\min_{i=1,\ldots, r}\mathrm{ord}_\mathfrak{p}(a_i). \]
  We say that $\mathfrak{p}$ divides $a$ if $\mathrm{ord}_\mathfrak{p}(a)>0$ and write $\mathfrak{p}|a$.
  This order does not change when changing the basis by $GL_r(\mathbb{Z})$.
  
  \vskip\baselineskip
  
  We call a sequence of non-negative integers $\mathbf{k}=(k_1,k_2,\ldots)$ a dominant integral weight
  if $k_i \geq k_{i+1}$ for all $i$, and $k_i=0$ for almost all $i$,
  and the biggest integer $m$ such that $k_m \neq 0$ a depth $l(\mathbf{k})$ of $\mathbf{k}$.
  The set of dominant integral weights with depth less than or equal to $n$ corresponds bijectively to
  the set of irreducible polynomial representations of $\mathrm{GL}_n(\mathbb{C})$.
  We denote the representation of $\mathrm{GL}_n(\mathbb{C})$ corresponds to
  a dominant integral weight $\mathbf{k}$ by $(\rho_{n, \mathbf{k}},V_{n, \mathbf{k}})$,
  and write $M_\mathbf{k}\left(\Gamma_n\right) =M_{\rho_{n, \mathbf{k}}}\left(\Gamma_n\right)$
  and $S_\mathbf{k}\left(\Gamma_n\right) = S_{\rho_{n, \mathbf{k}}}\left(\Gamma_n\right)$.
  When  $\mathbf{k}=(k,k,\ldots,k)$ (i.e. $\rho_\mathbf{k}=\det^k$),
  we also write $M_k\left(\Gamma_n\right)=M_\mathbf{k}\left(\Gamma_n\right)$
  and $S_k\left(\Gamma_n\right)=S_\mathbf{k}\left(\Gamma_n\right) $.
  For a dominant integral weight $\mathbf{k}=(k_1,\ldots,k_n)$ associated to a representation of $\mathrm{GL}_n(\mathbb{C})$
  we define strong depth $s\text{-}l(\mathbf{k})$ of $\mathbf{k}$ as the biggest integer $m$ such that $k_m>k_n$.
  if $\mathbf{k}=(k,\ldots,k)$, set $s\text{-}l(\mathbf{k})=0$.
  
  \vskip\baselineskip
  
  Let $\mathcal{H}_n$ be the Hecke algebra over $\mathbb{Z}$
  associated to the pair ($\Gamma_n$, $\mathrm{GSP}_n(\mathbb{Q})\cap M_{2n}(\mathbb{Z}))$.
  For a subring $R$ of $\mathbb{C}$ we write $\mathcal{H}_n(R)= \mathcal{H}_n\otimes_{\mathbb{Z}} R$.
  We define the action of Hecke algebra $\mathcal{H}_n(\mathbb{C})$ on $M_\mathbf{k}\left(\Gamma_n\right)$ as
  \[T(F) = \nu(g)^{k_1+\cdots+k_n-\frac{n(n+1)}{2}}\sum_{i=1}^{r}F|_{\rho_{n, \mathbf{k}}}g_i\]
  for a modular $F \in M_\mathbf{k}\left(\Gamma_n\right)$
  and an element $T=\Gamma_ng\Gamma_n= \bigsqcup_{i=1}^r \Gamma_ng_i$ (cosets decomposition) $\in \mathcal{H}_n(\mathbb{C})$,
  where $\nu(g) \in \mathbb{Q}$ is a similitude factor of $g \in \mathrm{GSP}_n(\mathbb{Q})$.
  Note that this definition does not depend on the representatives and it is well-defined.
  
  For a positive rational number $a$ and a positive integer $m$,
  we define a Hecke operator $[a]_n, T(m) \in \mathcal{H}_n$ as
  \[[a]_n=\Gamma_n(a \cdot 1_n)\Gamma_n,\]
  \[T(m)=\sum_{\substack{d_1,\ldots,d_n, e_1,\ldots,e_n \\ d_i|d_{i+1},\  e_{i+1}|e_i\\d_ie_i=m}}
  \Gamma_n\mathrm{diag}(d_1,\ldots,d_n,e_1,\ldots,e_n)\Gamma_n.\]
  Then, $\mathcal{H}_n(\mathbb{Q})$ generated over $\mathbb{Q}$ by $T(p)$, $T(p^2)$, and $[p^{-1}]_n$ for all prime $p$,
  and $\mathcal{H}_n$ generated over $\mathbb{Z}$ by $T(p)$ and $T(p^2)$ for all prime $p$.
  
  We say that a modular form $F \in M_\mathbf{k}\left(\Gamma_n\right)$ is Hecke eigenform
  if $F$ is a common eigenfunction of all Hecke operators $T \in \mathcal{H}_n(\mathbb{C})$.
  For a Hecke eigenform $F \in M_\mathbf{k}\left(\Gamma_n\right)$
  we call the field $\mathbb{Q}(F)$ generated by all Hecke eigenvalues over $\mathbb{Q}$ the Hecke field of $F$.
  It is well known that $\mathbb{Q}(F)$ is a finite totally real algebraic field.
  For ease, we write $\mathbb{Q}(F_1,\ldots,F_m)=\mathbb{Q}(F_1)\cdots\mathbb{Q}(F_m)$
  for $F_1,\ldots,F_m \in M_\mathbf{k}\left(\Gamma_n\right)$.
  
  \begin{dfn}
    Let $F,G \in M_\mathbf{k}\left(\Gamma_n\right)$ be Hecke eigenforms, and $\mathfrak{p}$ be prime ideal in $\mathbb{Q}(F,G)$.
    if $\lambda_F(T)\equiv\lambda_G(T)\mod\mathfrak{p}$ for all $T \in \mathcal{H}_n$, we say that $F$ and $G$ are Hecke congruent,
    and denote $F \equiv_{ev} G\mod\mathfrak{p}$.
  
  \end{dfn}
  
  \vskip\baselineskip
  
  Let $F \in M_\mathbf{k}\left(\Gamma_n\right)$ be a Hecke eigenform,
  and for a prime number $p$ we take the Satake $p$-parameters $\alpha_0(p), \alpha_1(p), \ldots, \alpha_n(p)$ of $F$ so that
  \[\alpha_0(p)^2\alpha_1(p)\cdots\alpha_n(p)=p^{k_1+\cdots+k_n-n(n+1)/2}.\]
  We define standard L-function $L(s,F,\mathrm{St})$ by
  \[L(s,F,\mathrm{St})=\prod_p \left((1-\alpha_0(p)p^{-s})\prod_{r=1}^n\prod_{1\leq i_1<\cdots<i_r\leq n}(1-\alpha_0(p)\alpha_{i_1}(p)\cdots\alpha_{i_r}(p)p^{-s})\right).\]
  For a Hecke eigenform $F \in S_k(\Gamma_n)$,
  we define $\mathbb{L}(s,F,\mathrm{St})$ by
  \[\mathbb{L}(s,F,\mathrm{St})=\Gamma_\mathbb{C}(s)\prod_{i=1}^n\Gamma_\mathbb{C}(s+k-i)\frac{L(s,F,\mathrm{St})}{(F,F)}.\]
  
  \begin{prop}[{\cite[Appendix A]{mizumoto1991poles}}]\label{prop:scalar-algebricity}
    Let $F$ be a Hecke eigenform in $S_k\left(\Gamma_n\right)(\mathbb{Q}(F))$.
    We set $n_0=3$ if $n\geq 5$ with $n\equiv 4$ and $n_0=1$ otherwise.
    Let $m$ be a positive integer $n_0\leq m \leq k-n$ such that $m \equiv n \mod 2$.
    Then $\mathbb{L}(m,F,\mathrm{St}) \in \mathbb{Q}(F)$.
  \end{prop}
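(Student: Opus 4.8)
The plan is to derive the algebraicity of $\mathbb{L}(m,F,\mathrm{St})$ from a doubling (pullback) integral representation of the standard $L$-function, in the spirit of the pullback formula of Theorem~\ref{thm:eisen}. Concretely, I would pair $F\otimes\overline{F}$ on $\mathbb{H}_n\times\mathbb{H}_n$ against the restriction to the diagonally embedded $\mathbb{H}_n\times\mathbb{H}_n\hookrightarrow\mathbb{H}_{2n}$, $(Z_1,Z_2)\mapsto\mathrm{diag}(Z_1,Z_2)$, of the degree-$2n$ Siegel--Eisenstein series $E_k^{(2n)}(Z,s)$ of scalar weight $k$. The Garrett--B\"ocherer--Shimura doubling identity expresses this double Petersson pairing as a product of (i) an explicit archimedean zeta integral, (ii) elementary Euler factors, and (iii) the quantity $L(s,F,\mathrm{St})\,(F,F)$: integrating the restricted Eisenstein series against $\overline{F}$ in the second variable reproduces $F$ times the $L$-factor, and the remaining pairing produces $(F,F)$. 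The archimedean integral is exactly what yields the factor $\Gamma_{\mathbb{C}}(s)\prod_{i=1}^n\Gamma_{\mathbb{C}}(s+k-i)$ up to a power of $\pi$ and a rational number, so that after dividing by $(F,F)$ the normalized value $\mathbb{L}(m,F,\mathrm{St})$ equals, up to an algebraic factor, the ratio $\langle R,F\rangle/(F,F)$, where $R$ is the value of the restricted Eisenstein series at the critical point $s_0$ corresponding to $s=m$.

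The first main input is the arithmeticity of the Siegel--Eisenstein series at the critical point. I would invoke the results of Shimura (and Sturm, Feit) showing that, after multiplication by the normalizing gamma and zeta factors, the Fourier coefficients of $E_k^{(2n)}(Z,s_0)$ are algebraic, indeed rational; the parity condition $m\equiv n\bmod 2$ and the range $n_0\le m\le k-n$ are precisely the constraints placing $s_0$ among the critical values for which this holds. At the near-boundary points the value $R$ is only nearly holomorphic, so I would apply the holomorphic projection operator, which preserves $\mathbb{Q}$-rationality of Fourier coefficients and commutes with the Petersson pairing against the holomorphic cusp form $F$; this converts (iii) into a pairing of $F$ with a rational holomorphic cusp form.

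The second input is the rationality of the Petersson ratio. For any $G\in S_k(\Gamma_n)(\overline{\mathbb{Q}})$, the $F$-isotypic projection $\langle G,F\rangle/(F,F)$ is algebraic, and its behaviour under $\sigma\in\mathrm{Aut}(\mathbb{C}/\mathbb{Q})$ is governed by Shimura's reciprocity, $\bigl(\langle G,F\rangle/(F,F)\bigr)^{\sigma}=\langle G^{\sigma},F^{\sigma}\rangle/(F^{\sigma},F^{\sigma})$. Since this projection is Hecke-equivariant and $F$ has Fourier coefficients in $\mathbb{Q}(F)$, taking $G$ with coefficients in $\mathbb{Q}(F)$ forces the ratio to be fixed by $\mathrm{Aut}(\mathbb{C}/\mathbb{Q}(F))$, hence to lie in $\mathbb{Q}(F)$. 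Combining this with the arithmeticity of the Eisenstein value and the explicit archimedean factor yields $\mathbb{L}(m,F,\mathrm{St})\in\mathbb{Q}(F)$.

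The hard part is twofold. First, one must track the archimedean zeta integral precisely enough to verify that its transcendental part is exactly absorbed by the gamma factors defining $\mathbb{L}$, leaving only an algebraic multiple; this is a delicate but purely local computation. Second, the exceptional value $n_0=3$ for $n\ge 5$ with $n\equiv 4$ reflects the possible poles of $L(s,F,\mathrm{St})$ and of the Eisenstein series in that range---the very phenomenon analyzed in \cite{mizumoto1991poles}---where the naive doubling identity degenerates; there one must instead argue with the residue or with a suitably shifted Eisenstein series so as to remain within the holomorphic range, and this is where the bulk of the care in the argument is concentrated.
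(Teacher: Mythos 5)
The paper offers no proof of this proposition at all: it is imported verbatim from Appendix~A of Mizumoto \cite{mizumoto1991poles}, whose argument is exactly the one you outline---the Garrett--B\"ocherer doubling/pullback integral for $L(s,F,\mathrm{St})$, arithmeticity of the (nearly holomorphic) Siegel--Eisenstein series at the critical point in the sense of Shimura--Sturm--Feit, holomorphic projection, and $\mathrm{Aut}(\mathbb{C})$-equivariance of the Petersson ratio $\langle G,F\rangle/(F,F)$, with the exceptional case ($n_0=3$, tied to the near-boundary point $m=1$ and the pole analysis that is the subject of \cite{mizumoto1991poles}) handled separately. So your proposal is a correct reconstruction following essentially the same route as the cited proof, modulo the local archimedean computation you rightly flag as the delicate part.
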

  
  \section{Siegel Operator and Klingen-Eisenstein series}
  In this section, let $\mathbf{k}=(k_1,k_2,\ldots)$ be a dominant integral weight with depth $m$ and strong depth $m'$,
  and $n \geq r>0$ be positive integers such that $n\geq m$ and $r\geq m'$.
  In addition, put $\mathbf{k}'=(k_1,\ldots,k_r) $.
  
  For a modular form $F \in M_\mathbf{k}\left(\Gamma_n\right)$ and 
  we define the Siegel operator $\Phi=\Phi^n_r$ as
  \[\Phi(F)(Z_1)=\lim_{y\rightarrow\infty}F\left(\begin{pmatrix}Z_1 & O \\ O & \sqrt{-1}y\cdot 1_{n-r} \\ \end{pmatrix}\right)
  =\sum_{T_1\in H_r(\mathbb{Z})_{\geq0}}a\left(F,\begin{pmatrix}T_1 & O \\ O & O \\ \end{pmatrix}\right)\mathbf{e}(\mathrm{tr}(T_1Z_1)).\]
  Then, $\Phi(F)$ belongs to $M_{\mathbf{k}'}\left(\Gamma_r\right)$.
  
  Let $\Delta_{n,r}$ be the subgroup of $\Gamma_n$ defined by 
  \[\Delta_{n,r}=\left\{\begin{pmatrix}* & *\\ O_{n-r,n+r} & *\\ \end{pmatrix}\in \Gamma_n\right\}.\]
  We define the Klingen-Eisenstein series $[F]^\mathbf{k}(Z,s)=[F]^\mathbf{k}_{\mathbf{k}'}(Z,s)$
  of $F \in M_{\mathbf{k}'}\left(\Gamma_r\right)$ as
  \[[F]^\mathbf{k}(Z,s)=\sum_{g\in \Delta_{n,r}\backslash\Gamma_n}
  \left(\frac{\det\Im(Z)}{\det\Im(\mathrm{pr}_r^n(Z))}\right)^s
  F\left(\mathrm{pr}^n_r(Z)\right)|_{\rho_{n, \mathbf{k}}}g \quad (Z \in \mathbb{H}_n, s \in \mathbb{C}),\]
  where $\mathrm{pr}^n_r(Z)=Z_1$ for $Z=\begin{pmatrix}Z_1 & Z_{12} \\ ^tZ_{12} & Z_2 \\ \end{pmatrix}$
  with $Z_1\in \mathbb{H}_r, Z_2\in\mathbb{H}_{n-r}, Z_{12}\in M_{r,n-r}(\mathbb{C})$
  and define the Siegel-Eisenstein series $E_{n, \mathbf{k}}(Z,s)$
  of weight $\mathbf{k}$ with respect to $\Gamma_n$ as
  \[ E_{n, \mathbf{k}}(Z,s)=\sum_{g\in \Delta_{n,0}\backslash\Gamma_n}
  \left(\det\Im(Z)\right)^s|_{\rho_{n, \mathbf{k}}}g \quad (Z \in \mathbb{H}_n, s \in \mathbb{C}).\]
  
  The holomorphy of the Klingen-Eisenstein series and the Siegel-Eisenstein series holds as follows,
  which is due to Shimura \cite{shimura2000arithmeticity} and \cite[Proposition 2.1.]{atobe2023harder}.
  
  \begin{prop}[{\cite[Proposition 2.1.]{atobe2023harder}}]
    Let $k$ be a positive even integer.
    \begin{enumerate}
      \item Suppose that $k \geq \dfrac{n + 1}{2}$ and that neither $k = \dfrac{n +2}{2} \equiv 2 \mod 4$
        nor $k = \dfrac{n + 3}{2}  \equiv 2 \mod 4$. Then $E_{n,k}(Z)=E_{n,k}(Z,0)$ belongs to $M_k(\Gamma_n)$.
      \item Let $\mathbf{k} = (\underbrace{k+\nu,\ldots,k+\nu}_m,\underbrace{k,\ldots,k}_{n-m})$ such that $\nu \geq 0$ and $k > \dfrac{3m}{2} + 1$ and 
        let $f$ be a Hecke eigenform in $S_{k+\nu}(\Gamma_m)$.
        Then $[f]^\mathbf{k}(Z, s)$ can be continued meromorphically
        to the whole $s$-plane as a function of $s$, and holomorphic at $s = 0$.
        Moreover suppose that $k > \dfrac{n+m+3}{2}$.
        Then $[f]^\mathbf{k}(Z)=[f]^\mathbf{k}(Z, 0)$ belongs to $M_\mathbf{k}(\Gamma_n)$.
    \end{enumerate}
  
  \end{prop}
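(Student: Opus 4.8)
I would treat both assertions as instances of the analytic continuation of an Eisenstein series attached to a parabolic subgroup of $\mathrm{Sp}_n$, following the classical route through the explicit Fourier expansion and Shimura's archimedean confluent hypergeometric functions. For (1) I would first rewrite the Siegel--Eisenstein series in the familiar shape
\[E_{n,k}(Z,s)=(\det Y)^s\sum_{\{C,D\}}\det(CZ+D)^{-k}\lvert\det(CZ+D)\rvert^{-2s},\]
where $\{C,D\}$ runs over $\mathrm{GL}_n(\mathbb{Z})$-classes of coprime symmetric pairs, using $\det\Im(g\cdot Z)=\det Y/\lvert\det(CZ+D)\rvert^2$. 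A standard majorization by $\sum_{\{C,D\}}\lvert\det(CZ+D)\rvert^{-k-2\Re(s)}$ gives absolute convergence for $k+2\Re(s)>n+1$. The next step is the Fourier expansion $E_{n,k}(Z,s)=\sum_{T\in H_n(\mathbb{Z})}a(T;Y,s)\,\mathbf{e}(\mathrm{tr}(TX))$, in which each coefficient factors as a Siegel series $b(T,s)$ times an archimedean confluent hypergeometric function $\xi(Y,T;s)$. The Siegel series is a finite Euler product, a rational function of the $p^{-s}$ computable explicitly (Kitaoka, Katsurada) and entire after normalization, while Shimura's analysis of $\xi(Y,T;s)$ supplies its meromorphic continuation together with the exact Gamma-factor poles. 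Combining the two yields the meromorphic continuation of every Fourier coefficient, hence of $E_{n,k}(Z,s)$.

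To pin down the value at $s=0$ I would track the Gamma factors in $\xi$. The hypothesis $k\geq(n+1)/2$ places $s=0$ within the range where the normalized coefficients are regular, while the two excluded congruences $k=(n+2)/2\equiv 2$ and $k=(n+3)/2\equiv 2\pmod 4$ are exactly the parameters at which a pole of the archimedean factor (equivalently, a zero of the normalizing product in the functional equation) collides with $s=0$; removing them guarantees holomorphy there. Finally, at $s=0$ the confluent hypergeometric function degenerates: $\xi(Y,T;0)$ reduces to $e^{-2\pi\mathrm{tr}(TY)}$ for $T\geq 0$ and vanishes otherwise, so the $X$- and $Y$-dependence recombine into $\mathbf{e}(\mathrm{tr}(TZ))$ supported on $T\in H_n(\mathbb{Z})_{\geq 0}$. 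Thus $E_{n,k}(Z,0)$ is holomorphic, and since the construction makes it invariant under the slash action $|_{\det^k}$, it lies in $M_k(\Gamma_n)$.

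For (2), the Klingen--Eisenstein series $[f]^{\mathbf{k}}(Z,s)$ is the Eisenstein series attached to the parabolic $\Delta_{n,m}$ induced from the cusp form $f\in S_{k+\nu}(\Gamma_m)$; absolute convergence for $\Re(s)\gg 0$ is again obtained by majorization. For the meromorphic continuation I would use the doubling/pullback method to express the inner product of $[f]^{\mathbf{k}}$ against $f$ in terms of a Siegel--Eisenstein series of larger degree and the standard $L$-function $L(s,f,\mathrm{St})$; the continuation from part (1) together with the known analytic properties of $L(s,f,\mathrm{St})$ then propagates to $[f]^{\mathbf{k}}(Z,s)$ (equivalently, one may invoke Langlands' general theory, where the constant term is a ratio of standard $L$-values and intertwining integrals). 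The condition $k>\tfrac{3m}{2}+1$ ensures that the normalizing factor is holomorphic and nonvanishing at the point corresponding to $s=0$, giving holomorphy there, while the stronger condition $k>\tfrac{n+m+3}{2}$ forces the lower-rank confluent hypergeometric functions in the Fourier expansion to degenerate to holomorphic exponentials at $s=0$ and controls the growth, so that $[f]^{\mathbf{k}}(Z,0)\in M_{\mathbf{k}}(\Gamma_n)$.

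\textbf{Main obstacle.} The hard part is the archimedean analysis: establishing the precise meromorphic continuation, the exact location of poles, and the special values at $s=0$ of Shimura's confluent hypergeometric functions $\xi(Y,T;s)$, since it is these that produce both the delicate $\bmod\,4$ exclusions in (1) and the sharp weight bounds in (2). All of this technical input is contained in \cite{shimura2000arithmeticity} and \cite[Proposition 2.1]{atobe2023harder}, which I would cite rather than reprove.
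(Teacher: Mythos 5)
The paper gives no proof of this proposition at all---it is imported verbatim from \cite[Proposition 2.1]{atobe2023harder}, resting on Shimura's work \cite{shimura2000arithmeticity}---and your proposal follows essentially the same route: you sketch the standard argument (Fourier expansion into Siegel series times Shimura's confluent hypergeometric factors for (1), the pullback/doubling reduction to part (1) and $L(s,f,\mathrm{St})$ for (2)) and then defer the decisive archimedean analysis to exactly those two references, so the approaches coincide. One small caveat on your heuristic for the exclusions: when $k=\tfrac{n+2}{2}\equiv 2 \pmod 4$ or $k=\tfrac{n+3}{2}\equiv 2 \pmod 4$ the value $E_{n,k}(Z,0)$ is in fact finite but merely fails to be holomorphic in $Z$ (the confluent hypergeometric factor does not degenerate to a pure exponential, leaving a nearly holomorphic form, as with weight $2$ for $n=1$), rather than the series acquiring a pole at $s=0$ as your wording suggests---but since you cite Shimura for precisely this point, this does not affect the soundness of the proposal.
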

  
  Consider the case $n=2$, $r=1$.
  In this case, the correspondence between the Siegel operator and the Klingen-Eisenstein series
  has been investigated by Andrianov \cite{and} and Arakawa \cite{arakawa1983vector}.
  
  Let $N_{(k+\nu,k)}(\Gamma_2)$ be the orthogonal complement  of $S_{(k+\nu,k)}(\Gamma_2)$
  in $M_{(k+\nu,k)}(\Gamma_2)$ for the Petersson inner product.
  Then, the following isomorphism holds.
  
  \begin{prop}[{\cite[Proposition 1.3.]{arakawa1983vector}}]\label{prop:arakawa1}
    Let $k$, $\nu$ be even integers with $k>4$, $\nu>0$.
  
    Then the space $N_{(k+\nu,k)}(\Gamma_2)$ is isomorphic to $S_{k+\nu}(\Gamma_1)$,
    and the isomorphism is given via the Siegel operator $\Phi^2_1$ and the Klingen-Eisenstein series $[\ \cdot\ ]^{(k+\nu,k)}$. 
  
  \end{prop}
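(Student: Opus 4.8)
The plan is to realize the claimed isomorphism as a pair of mutually inverse maps: the Siegel operator $\Phi=\Phi^2_1$ in one direction and the Klingen-Eisenstein lift $\iota\colon f\mapsto[f]^{(k+\nu,k)}$ in the other. I would organize everything around the left-exact sequence
\[
0\longrightarrow S_{(k+\nu,k)}(\Gamma_2)\longrightarrow M_{(k+\nu,k)}(\Gamma_2)\xrightarrow{\ \Phi\ }M_{k+\nu}(\Gamma_1),
\]
whose exactness at the middle term is the standard identification $\ker\Phi=S_{(k+\nu,k)}(\Gamma_2)$: a form lies in $\ker\Phi$ iff $a(F,\mathrm{diag}(t,0))=0$ for all $t\geq0$, and transporting this through the $\mathrm{GL}_2(\mathbb{Z})$-action on Fourier coefficients forces $a(F,T)=0$ for every singular (rank $\leq1$) positive semidefinite $T$, i.e.\ $F$ is a cusp form.

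First I would pin down the image of $\Phi$. The constant term of $\Phi(F)$ equals $a(F,0_2)$, and the transformation law $a(F,{}^tUTU)=\rho(U)\,a(F,T)$ for $U\in\mathrm{GL}_2(\mathbb{Z})$, specialised to $T=0$, shows that $a(F,0_2)$ is fixed by $\rho(\mathrm{GL}_2(\mathbb{Z}))$. Since $\rho=\det^k\otimes\mathrm{Sym}^\nu$ restricts on $\mathrm{SL}_2(\mathbb{Z})$ to the nontrivial irreducible $\mathrm{Sym}^\nu$ (this is exactly where $\nu>0$ enters) and $\mathrm{SL}_2(\mathbb{Z})$ is Zariski-dense in $\mathrm{SL}_2(\mathbb{C})$, the space of invariants is zero, so $a(F,0_2)=0$ and hence $\Phi(F)\in S_{k+\nu}(\Gamma_1)$. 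Thus $\Phi$ already maps into the cusp forms, and since $\ker\Phi=S_{(k+\nu,k)}(\Gamma_2)$ is by definition the orthogonal complement of $N_{(k+\nu,k)}(\Gamma_2)$, the restriction $\Phi|_N\colon N_{(k+\nu,k)}(\Gamma_2)\to S_{k+\nu}(\Gamma_1)$ is injective.

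It then remains to exhibit a section. For $f\in S_{k+\nu}(\Gamma_1)$ the value $[f]^{(k+\nu,k)}(Z)=[f]^{(k+\nu,k)}(Z,0)$ is a genuine holomorphic element of $M_{(k+\nu,k)}(\Gamma_2)$ by the holomorphy Proposition (applicable since $k>4$ gives $k>\tfrac{n+m+3}{2}$ for $n=2$, $m=1$; the general, non-eigenform case follows by linearity). Two facts then close the argument. (i) $\Phi\bigl([f]^{(k+\nu,k)}\bigr)=f$: putting $Z=\mathrm{diag}(z_1,\sqrt{-1}y)$ and letting $y\to\infty$, every coset $g\in\Delta_{2,1}\backslash\Gamma_2$ other than the identity contributes a term that decays, leaving exactly $f(z_1)$ — the reproducing property built into the Klingen normalisation. (ii) $[f]^{(k+\nu,k)}\in N_{(k+\nu,k)}(\Gamma_2)$: for any cusp form $G$, unfolding the defining sum against $G$ in the Petersson product collapses the integral over $\Gamma_2\backslash\mathbb{H}_2$ to one over $\Delta_{2,1}\backslash\mathbb{H}_2$ expressed through the constant term $\Phi(G)$ of $G$ along the parabolic $\Delta_{2,1}$, which vanishes by cuspidality; hence $\bigl([f]^{(k+\nu,k)},G\bigr)=0$. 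Together (i) and (ii) show that $\iota$ lands in $N$ and is a right inverse of $\Phi|_N$, so combined with the injectivity above, $\Phi|_N$ and $\iota$ are mutually inverse isomorphisms.

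I expect the main obstacle to be the justification of (i) and (ii) at the non-absolutely-convergent point $s=0$: both the reproducing property and the unfolding are transparent for $\mathrm{Re}(s)\gg0$, but transferring them to $s=0$ requires the meromorphic continuation and holomorphy supplied by the cited Proposition, together with the check that the term-by-term limit defining $\Phi$ commutes with analytic continuation in $s$. The representation-theoretic vanishing of the constant term, by contrast, is clean, and is precisely the step that uses the hypothesis $\nu>0$.
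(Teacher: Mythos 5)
Your argument is correct and follows essentially the same route as the source: the paper itself gives no proof but quotes the result from Arakawa \cite{arakawa1983vector}, whose Proposition 1.3 is established along exactly the lines you use --- $\ker\Phi^2_1=S_{(k+\nu,k)}(\Gamma_2)$ via $\mathrm{GL}_2(\mathbb{Z})$-equivalence of singular $T$ to $\mathrm{diag}(t,0)$, cuspidality of the image because $a(F,0_2)$ is a $\rho(\mathrm{GL}_2(\mathbb{Z}))$-invariant vector and $\mathrm{Sym}^\nu$ has no nonzero $\mathrm{SL}_2$-invariants for $\nu>0$, and the Klingen lift as a section of $\Phi$ landing in $N_{(k+\nu,k)}(\Gamma_2)$ by unfolding. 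The only inessential point is your closing caveat: for even $k>4$ the Klingen--Eisenstein series attached to a cusp form converges absolutely already at $s=0$ (Klingen's bound $k>n+r+1=4$, carried over to the vector-valued setting by Arakawa), so both the reproducing property and the orthogonality follow from direct unfolding, with no appeal to analytic continuation in $s$ needed.
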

  
  The following facts about the Hecke eigenvalues are known.
  
  \begin{prop}[{\cite[Proposition 3.2.]{arakawa1983vector}}]\label{prop:arakawa2}
    Let $F \in N_{(k+\nu,k)}(\Gamma_2)$ and put $\Phi (F)(z)=f(z) \in S_{k+\nu}(\Gamma_1)$.
    Then, $F$ is a Hecke eigenform for $\mathcal{H}_2$,
    if and only if $f$ is a common eigenform for $\mathcal{H}_1$.
  
    In this situation, set $T(m)F=\lambda_F(m)F$ and $T(m)f=\lambda_f(m)f$ ($m=1,2,\ldots$).
    Then, for any prime $p$, we have
    \[\left\{
      \begin{array}{ll}
        \lambda_F(p)&=(1+p^{k-2})\lambda_f(p),\\
        \lambda_F(p^2)&=(1+p^{k-2}+p^{2k-4})\lambda_f(p^2)+(p-1)p^{2k+\nu-4}.
      \end{array}
    \right. 
      \]
    
    \end{prop}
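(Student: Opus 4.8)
The plan is to reduce everything to the compatibility of the Siegel operator $\Phi=\Phi^2_1$ with the Hecke action and then to a short computation with Satake parameters. By Proposition~\ref{prop:arakawa1}, the hypothesis $F\in N_{(k+\nu,k)}(\Gamma_2)$ forces $F$ to be a scalar multiple of the Klingen--Eisenstein lift $[f]^{(k+\nu,k)}$ of $f=\Phi(F)$. The first step is to record the Hecke--equivariance of this lift: following Zharkovskaya's theorem, $\Phi$ intertwines the two Hecke algebras through a ring homomorphism $\psi\colon\mathcal{H}_2\to\mathcal{H}_1$ satisfying $\Phi(T\cdot\varphi)=\psi(T)\cdot\Phi(\varphi)$ for every $\varphi\in M_{(k+\nu,k)}(\Gamma_2)$ and $T\in\mathcal{H}_2$, and dually $T\cdot[g]^{(k+\nu,k)}=[\psi(T)g]^{(k+\nu,k)}$ for $g\in S_{k+\nu}(\Gamma_1)$. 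If $f$ is an eigenform then the second identity makes $F=[f]^{(k+\nu,k)}$ an eigenform; conversely, if $F$ is an eigenform then $\psi(T)f=\Phi(T\cdot F)=\lambda_F(T)\,\Phi(F)=\lambda_F(T)f$, so using the surjectivity of $\psi$ onto $\mathcal{H}_1$ we conclude that $f$ is an eigenform. This gives the equivalence; it remains to compute $\lambda_F(p)$ and $\lambda_F(p^2)$.

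For the eigenvalues I would pass to Satake parameters. Write $\alpha_0=\alpha_0(p)$, $\alpha_1=\alpha_1(p)$ for the parameters of $f$, normalized by $\alpha_0^2\alpha_1=p^{k+\nu-1}$, so that the two Frobenius (spin) parameters of $f$ are $\{\alpha_0,\alpha_0\alpha_1\}$ and $\lambda_f(p)=\alpha_0(1+\alpha_1)$. The effect of $\psi$ on parameters---equivalently, the parabolic-induction description of the Klingen series---shows that the degree-$2$ parameters of $F$ are $\{\beta_0,\beta_1,\beta_2\}=\{\alpha_0,\alpha_1,p^{k-2}\}$, which respects the normalization $\beta_0^2\beta_1\beta_2=p^{(k+\nu)+k-3}$. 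Since $\lambda_F(p)=\beta_0(1+\beta_1)(1+\beta_2)$, the new factor $(1+\beta_2)=1+p^{k-2}$ produces exactly $\lambda_F(p)=(1+p^{k-2})\lambda_f(p)$.

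For $T(p^2)$ I would feed these parameters into Andrianov's spinor identity
\[\sum_{m\geq 0}\lambda_F(p^m)X^m=\frac{1-p^{2k+\nu-4}X^2}{\prod_{i=1}^4(1-\gamma_i X)},\]
where the four spin parameters are $\gamma_i\in\{\alpha_0,\alpha_0\alpha_1,\alpha_0 p^{k-2},\alpha_0\alpha_1 p^{k-2}\}$, so that the denominator factors as $(1-\lambda_f(p)X+p^{k+\nu-1}X^2)(1-\lambda_f(p)p^{k-2}X+p^{k+\nu-1}p^{2k-4}X^2)$. Reading off the coefficient of $X^2$ gives $\lambda_F(p^2)=e_1^2-e_2-p^{2k+\nu-4}$, where $e_1,e_2$ are the first two elementary symmetric functions of the $\gamma_i$. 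A direct symmetric-function computation, organized by powers of $p^{k-2}$, yields $e_1^2-e_2=(1+p^{k-2}+p^{2k-4})\lambda_f(p^2)+p^{2k+\nu-3}$ once one substitutes the degree-$1$ Hecke relation $\lambda_f(p)^2=\lambda_f(p^2)+p^{k+\nu-1}$. Subtracting the numerator term and using $p^{2k+\nu-3}-p^{2k+\nu-4}=(p-1)p^{2k+\nu-4}$ gives precisely the claimed formula for $\lambda_F(p^2)$.

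The main obstacle is pinning down the two inputs special to the Klingen lift: (i) that the third Satake parameter is exactly $p^{k-2}$, which requires the parabolic-induction/Zharkovskaya description of $[f]^{(k+\nu,k)}$ together with care about the normalizing factor $\nu(g)^{k_1+\cdots+k_n-n(n+1)/2}$ in the Hecke action fixed in Section~2; and (ii) the value of the numerator constant $p^{2k+\nu-4}=p^{k_1+k_2-4}$ in Andrianov's identity for the vector-valued weight $\det^k\otimes\mathrm{Sym}^\nu$. Once these are fixed, the rest is the routine symmetric-function bookkeeping sketched above, and the appearance of the non-multiplicative term $(p-1)p^{2k+\nu-4}$ is seen to come entirely from the numerator of the spinor identity, invisible to a naive ``product of $(1+\beta_i)$'' heuristic.
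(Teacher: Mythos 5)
Your argument cannot be checked against an internal proof: the paper states this proposition as a quoted result of Arakawa and offers no argument of its own, so the only benchmark is Arakawa's original paper, which likewise rests on the Hecke-equivariance of the Siegel operator and of the Klingen--Eisenstein lift (your first step) but then computes the eigenvalues by explicit double-coset manipulations rather than through the spinor generating series. Your route is correct, and I verified the computation: with $a=\alpha_0$, $b=\alpha_1$, $q=p^{k-2}$, the spin parameters $\{a,ab,aq,abq\}$ give $e_1=a(1+b)(1+q)$, $e_2=a^2b(1+q)^2+a^2q(1+b^2)$, hence $e_1^2-e_2=\lambda_f(p^2)(1+q+q^2)+p^{k+\nu-1}q$ after substituting $\lambda_f(p^2)=\lambda_f(p)^2-p^{k+\nu-1}$, and subtracting the numerator term $p^{2k+\nu-4}$ yields exactly $(1+p^{k-2}+p^{2k-4})\lambda_f(p^2)+(p-1)p^{2k+\nu-4}$; the normalization checks ($\beta_0^2\beta_1\beta_2=p^{2k+\nu-3}$, matching the paper's convention) also go through. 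Three points deserve tightening. First, the relation $\beta_0^2\beta_1\beta_2=p^{2k+\nu-3}$ with $\beta_1=\alpha_1$, $\beta_2=p^{k-2}$ only determines $\beta_0$ up to sign, and $\beta_0=-\alpha_0$ would falsify $\lambda_F(p)=(1+p^{k-2})\lambda_f(p)$; you must extract the sign from the parabolic-induction/Zharkovskaya description itself (or compute the $T(p)$-eigenvalue directly), not from the normalization. Second, your caveat (ii) is cleanly discharged by observing that Andrianov's identity lives in the abstract local Hecke algebra with numerator $1-p^2[p]_2X^2$, weight-independently; under the paper's normalization $[p]_2$ acts on $M_{(k+\nu,k)}(\Gamma_2)$ by $\nu(p1_4)^{2k+\nu-3}\rho(p1_2)^{-1}=p^{2(2k+\nu-3)-(2k+\nu)}=p^{2k+\nu-6}$, giving the constant $p^{2k+\nu-4}=p^{k_1+k_2-4}$ you used. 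Third, the surjectivity of $\psi$ needed for the converse direction should be recorded explicitly: under Satake, $\psi$ is the specialization $x_2\mapsto p^{k-2}$, and the images of $T(p)$ and $[p^{\pm1}]_2$ are $p$-power-unit multiples of the generators of $\mathcal{H}_1(\mathbb{Q})$, which suffices. With these repairs your proof is a complete and arguably more transparent substitute for the cited one.
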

  
  Now we will state the conjecture about Kurokawa-Mizumoto congruence.
  
  \begin{conj}
    Let $k$, $\nu$ be even integers with $k>4$, $\nu>0$.
  
    Let $f(z) =\sum_{n>0}a(n,f)\mathbf{e}(nz) \in S_{k+\nu}\left(\Gamma_1\right)$
    be a normalized Hecke eigenform (i.e. a Hecke eigenform with $a(1,f)=1$),
    and suppose that a large prime $\mathfrak{p}$ of $\mathbb{Q}$ divides $L(k-1,f, \mathrm{St})$.
    Then, there exist a Hecke eigenform $F \in S_{(k+\nu,k)}(\Gamma_2)$, and a prime ideal $\mathfrak{p}'|\mathfrak{p}$ in $\mathbb{Q}(F)$
    such that 
    \[F \equiv_{ev} \left[f\right]^{(k+\nu,k)} \mod \mathfrak{p}'.\]
    In particular, for all primes $p$
    \[\lambda_F(p) \equiv_{ev} (1+p^{k-2})\lambda_f(p) \mod \mathfrak{p}'.\]
  
  \end{conj}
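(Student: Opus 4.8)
The plan is to reduce the congruence to the purely algebraic criterion of Lemma~\ref{lem:cong}. That lemma says that if a single $\mathfrak{p}$-integral vector-valued form $G \in M_{(k+\nu,k)}(\Gamma_2)$ admits a Hecke-eigenbasis expansion $G = \sum_i c_i F_i$ in which the coefficient $c_1$ attached to $F_1 = [f]^{(k+\nu,k)}$ satisfies $\mathrm{ord}_\mathfrak{p}(c_1\, a(A,F_1)) < 0$ for some half-integral $A$, then some other eigenform $F_i$ is Hecke-congruent to $[f]^{(k+\nu,k)}$ modulo $\mathfrak{p}$. So the whole task is to manufacture such a $G$ in which the Klingen-Eisenstein lift occurs with a $\mathfrak{p}$-denominator governed by $L(k-1,f,\mathrm{St})$.

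First I would build $G$ by restricting a degree-$4$ Siegel-Eisenstein series $E_{4,k}(Z,s)$, holomorphic at the arithmetic point $s=0$, to $\mathbb{H}_2 \times \mathbb{H}_2$, after applying one of Ibukiyama's automorphy-preserving differential operators $\Phi_h(\partial_Z)$ (Theorem~\ref{thm:mainweight}) to upgrade the scalar weight $\det^k$ to the vector weight $\det^k \otimes \mathrm{Sym}^\nu$ on the degree-$2$ factors; taking one fixed Fourier coefficient in the second variable then produces a genuine $G \in M_{(k+\nu,k)}(\Gamma_2)$, which I expand in a Hecke eigenbasis of this finite-dimensional space (avoiding all convergence issues). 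The pullback formula (Theorem~\ref{thm:eisen}) expresses the coefficients $c_i$ through standard $L$-values; via Arakawa's description of $N_{(k+\nu,k)}(\Gamma_2)$ (Propositions~\ref{prop:arakawa1} and~\ref{prop:arakawa2}), the degree-$2$ standard $L$-function of $[f]^{(k+\nu,k)}$ factors through $L(s,f,\mathrm{St})$, so the component along $[f]^{(k+\nu,k)}$ carries, up to the explicit constant $\mathcal{C}_{4,k}(f)$ and elementary $\Gamma$- and $\pi$-factors, the value $\mathbb{L}(k-1,f,\mathrm{St})$ in its denominator.

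The arithmetic step is then to read off $\mathrm{ord}_\mathfrak{p}(c_1)$. Since the differentiated, restricted Eisenstein series has $\mathfrak{p}$-integral Fourier coefficients — this is where the holomorphy/integrality range $p \ge 2(k+\nu)-3$ of hypothesis~(3) enters, guaranteeing that $E_{4,k}(Z,0)$ is a genuine holomorphic form and that Proposition~\ref{prop:scalar-algebricity} controls the relevant periods — while $c_1$ carries $\mathbb{L}(k-1,f,\mathrm{St})$ as a denominator, the divisibility $\mathfrak{p} \mid \mathbb{L}(k-1,f,\mathrm{St})$ of hypothesis~(1) forces $\mathrm{ord}_\mathfrak{p}(c_1) = -\alpha < 0$. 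The product $\mathcal{C}_{4,k}(f)\,a(A,[f]^{(k+\nu,k)})$, which hypothesis~(2) requires to be a $\mathfrak{p}$-unit, combines with this denominator to give $\mathrm{ord}_\mathfrak{p}(c_1\,a(A,F_1)) = -\alpha < 0$, and Lemma~\ref{lem:cong} delivers an eigenform $G_0 \equiv_{ev} [f]^{(k+\nu,k)} \bmod \mathfrak{p}$ that is not a scalar multiple of the lift; refining this to a congruence modulo $\mathfrak{p}^\alpha$ uses the Katsurada--Mizumoto analysis of the pole of order $\alpha$.

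The main obstacle is twofold. First, Lemma~\ref{lem:cong} only outputs a form in $M_{(k+\nu,k)}(\Gamma_2)$, whereas the conjecture demands a \emph{cusp} form; a priori the congruent eigenform could be another Klingen-Eisenstein lift $[f_{1,i}]^{(k+\nu,k)}$, and excluding this requires both the non-divisibility~(4) of the companion values $\mathbb{L}(k-1,f_{1,i},\mathrm{St})$ and a residual-reducibility argument via Galois representations, which is the content behind Theorem~\ref{thm:cusp} and needs the $p$-integrality of the periods $\mathfrak{A}(f_{r,i})$ in~(5). Second, and more fundamentally, these auxiliary hypotheses cannot be removed by the present method: the unconditional conjecture would require uniform $p$-adic control of $\mathcal{C}_{4,k}(f)$, the existence of a $\mathfrak{p}$-unit Fourier coefficient of the lift for \emph{every} large $\mathfrak{p}$, and control of the full period lattice, and I expect this uniformity to be the genuinely hard part — which is precisely why the statement is established only as a collection of sufficient conditions (Theorem~\ref{thm:main}) rather than in full generality.
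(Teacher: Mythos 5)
The statement you were asked to prove is a \emph{conjecture} in this paper: the author does not prove it, and neither do you. What your proposal actually sketches is the paper's proof of its conditional result, Theorem~\ref{thm:main} (pullback of $E_{4,k}$ restricted to $\mathbb{H}_2\times\mathbb{H}_2$ via an Ibukiyama-type differential operator, extraction of a Fourier coefficient in the second variable to get $G\in M_{\rho_2}(\Gamma_2)\otimes V_{2,\mathbf{k}}$, expansion along an eigenbasis, and Lemma~\ref{lem:cong} applied to the coefficient carrying $\mathbb{L}(k-1,f,\mathrm{St})^{-1}$, followed by the Katsurada--Mizumoto refinement for the modulus $\mathfrak{p}^\alpha$ and the Galois-theoretic cuspidality argument of Theorem~\ref{thm:cusp}). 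As a reconstruction of that conditional argument your outline is faithful and essentially identical to the paper's route, and you are candid that this is all it achieves.

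The genuine gap, relative to the statement as posed, is that your hypotheses~(2)--(5) are nowhere available. The conjecture assumes only that a large prime divides $L(k-1,f,\mathrm{St})$; it grants you no matrix $A$ with $\mathrm{ord}_p(\mathcal{C}_{4,k}(f)\,a(A,[f]^{(k+\nu,k)}))=0$, no bound relating $p$ to $k+\nu$, no control of the companion values $\mathbb{L}(k-1,f_{1,i},\mathrm{St})$, and no coprimality with the ideals $\mathfrak{A}(f_{r,i})$. Without~(2) the decisive inequality $\mathrm{ord}_\mathfrak{p}(c_1\,a(A,F_1))<0$ in Lemma~\ref{lem:cong} cannot be verified at any $A$, and this is not a removable technicality: as the paper's remark after Theorem~\ref{thm:main} notes, Katsurada--Mizumoto exhibit scalar-valued examples where~(1) and~(3) hold,~(2) fails, and the congruence genuinely disappears, so the step would actually be false in general rather than merely unproven. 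Likewise, producing a \emph{cusp} form $F$ (as the conjecture demands) requires either~(4)--(5) or the non-self-congruence and $(p-1)\nmid 4(k-2)$ hypotheses of Theorem~\ref{thm:cusp}, none of which follow from $\mathfrak{p}\mid L(k-1,f,\mathrm{St})$ alone. So the correct conclusion is the one you reach in your final paragraph: the method yields only the collection of sufficient conditions in Theorem~\ref{thm:main}, and the conjecture itself remains open.
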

  
  The key to the proof of the main theorem is the following lemma \cite[Lemma 6.10]{atobe2023harder},
  which is proved in the same way as \cite[Lemma 5.1]{katsurada2008congruence}.
  \begin{lem}\label{lem:cong}
    Let $F_1,\ldots,F_d$ be Hecke eigenforms in $M_{\mathbf{k}}(\Gamma_n)$ linearly independent over $\mathbb{C}$.
    Let $K$ be the composite field $\mathbb{Q}(F_1),\ldots,\mathbb{Q}(F_d)$, $\mathcal{O}$ the ring of integers of $K$
    and $\mathfrak{p}$ a prime ideal of $K$. Let $G(Z) \in(M_{\rho_n}(\Gamma_n)\otimes V_{n,\mathbf{k}})(\mathcal{O}_{(\mathfrak{p})})$ and assume the following conditions
    \begin{enumerate}
      \item $G$ is expressed as
      \[ G(Z)=\sum_{i=1}^d c_i F_i(Z)\]
      with $c_i \in V_{n,\mathbf{k}}$.
      \item $c_1a(A,F_1) \in (V_{n,\mathbf{k}} \otimes V_{n,\mathbf{k}})(K)$ and $\mathrm{ord}_\mathfrak{p}(c_1a(A,F_1))<0$
      for some $A \in H_n(\mathbb{Z})$.
    \end{enumerate}
    Then there exists $i\neq 1$ such that
    \[F_i \equiv_{ev}F_1 \mod \mathfrak{p}.\]
  
  \end{lem}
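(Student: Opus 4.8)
The plan is to argue by contradiction, exploiting the fact that, if $F_1$ were incongruent to every other $F_i$, then a single element of the Hecke algebra would isolate the $F_1$-component of $G$ while preserving $\mathfrak{p}$-integrality, which clashes with condition (2). So suppose $F_i \not\equiv_{ev} F_1 \bmod \mathfrak{p}$ for every $i \neq 1$. Since the Hecke operators act on $G$ only through the modular variable $Z$ while the $c_i$ are constant vectors in the auxiliary factor $V_{n,\mathbf{k}}$, condition (1) gives, for every $T \in \mathcal{H}_n$,
\[
  T G \;=\; \sum_{i=1}^{d} c_i\,(T F_i) \;=\; \sum_{i=1}^{d} \lambda_{F_i}(T)\, c_i F_i .
\]

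To build the separating operator, I would fix for each $i \neq 1$ a Hecke operator $T_i \in \mathcal{H}_n$ witnessing the assumed incongruence, namely $\lambda_{F_1}(T_i) \not\equiv \lambda_{F_i}(T_i) \bmod \mathfrak{p}$. As all Hecke eigenvalues of level-one forms lie in $\mathcal{O}$ and are thus $\mathfrak{p}$-integral, this is the same as $\mathrm{ord}_\mathfrak{p}\bigl(\lambda_{F_1}(T_i) - \lambda_{F_i}(T_i)\bigr) = 0$. Then set
\[
  P \;=\; \prod_{i \neq 1}\bigl(T_i - \lambda_{F_i}(T_i)\bigr)\ \in\ \mathcal{H}_n \otimes_{\mathbb{Z}} \mathcal{O}_{(\mathfrak{p})}.
\]
For $j \neq 1$ the factor indexed by $i=j$ acts on $F_j$ by $\lambda_{F_j}(T_j)-\lambda_{F_j}(T_j)=0$, so $P F_j = 0$, whereas $P F_1 = \Lambda F_1$ with $\Lambda = \prod_{i\neq 1}\bigl(\lambda_{F_1}(T_i)-\lambda_{F_i}(T_i)\bigr)$, a unit in $\mathcal{O}_{(\mathfrak{p})}$ by the choice of the $T_i$. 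Hence $P G = \Lambda\, c_1 F_1$, and comparing the $A$-th Fourier coefficients yields $a(A, P G) = \Lambda\, c_1 a(A, F_1)$.

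It then remains to combine two facts. First, with the arithmetic normalization $\nu(g)^{k_1+\cdots+k_n-n(n+1)/2}$ of the Hecke action, every $T \in \mathcal{H}_n$ preserves $\mathcal{O}_{(\mathfrak{p})}$-integrality of Fourier coefficients; since $G \in (M_{\rho_n}(\Gamma_n)\otimes V_{n,\mathbf{k}})(\mathcal{O}_{(\mathfrak{p})})$ and $P$ is an $\mathcal{O}_{(\mathfrak{p})}$-combination of integral Hecke operators, we get $\mathrm{ord}_\mathfrak{p}\bigl(a(A, PG)\bigr) \geq 0$. Second, because $\Lambda$ is a $\mathfrak{p}$-unit, scaling by $\Lambda$ does not change the coordinatewise minimal order, so $\mathrm{ord}_\mathfrak{p}\bigl(a(A, PG)\bigr) = \mathrm{ord}_\mathfrak{p}\bigl(c_1 a(A, F_1)\bigr) < 0$ by condition (2). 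This contradiction forces $F_i \equiv_{ev} F_1 \bmod \mathfrak{p}$ for some $i \neq 1$.

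The main obstacle is the integrality input in the previous paragraph: one must know that the normalized Hecke operators act on the $\mathcal{O}_{(\mathfrak{p})}$-lattice of $V_{n,\mathbf{k}}\otimes V_{n,\mathbf{k}}$-valued forms without introducing denominators at $\mathfrak{p}$, and that each $\lambda_{F_i}(T)$ is $\mathfrak{p}$-integral. Both are standard for level-one Siegel modular forms, but they are exactly what makes the operator $P$ legitimate; the remainder is the elementary linear algebra of eigenvalue separation together with the invariance of $\mathrm{ord}_\mathfrak{p}$ under multiplication by a unit.
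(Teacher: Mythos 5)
Your proposal is correct and takes essentially the same route as the paper's proof: the paper does not spell the argument out but proves the lemma ``in the same way as'' \cite[Lemma 5.1]{katsurada2008congruence} (cf.\ \cite[Lemma 6.10]{atobe2023harder}), and that argument is exactly your contradiction via the separating operator $\prod_{i\neq 1}\bigl(T_i-\lambda_{F_i}(T_i)\bigr)$, using that Hecke eigenvalues lie in $\mathcal{O}$ and that the normalized Hecke action preserves $\mathcal{O}_{(\mathfrak{p})}$-integrality of Fourier coefficients, so that $a(A,PG)=\Lambda\,c_1a(A,F_1)$ with $\Lambda$ a $\mathfrak{p}$-unit contradicts condition (2).
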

  
  \section{Differential Operators}
  
  Let $n = n_1+\cdots+n_d \geq 2$ be a positive integer with $n_d\geq\cdots\geq n_1\geq 1$.
  We embed $\mathbb{H}_{n_1}  \times\cdots\times \mathbb{H}_{n_d}$ in $\mathbb{H}_n$
  and embed $\Gamma_{n_1}\times\cdots\times\Gamma_{n_d}$ in $\Gamma_n$ diagonally.
  Let $\mathbf{k}$ be a dominant integral weight with $l(\mathbf{k})\leq n_1$
  and $\left(\rho_{n_i,\mathbf{k}},V_i\right)$ be the irreducible representation of $\mathrm{GL}_{n_i}(\mathbb{C})$ associated to $\mathbf{k}$.
  For irreducible representations
  $(\rho_i, V_i)$ of $\mathrm{GL}_{n_i}(\mathbb{C})$,
  a $V_1\otimes \cdots\otimes V_d$-valued function $f(Z_1,\ldots,Z_d)$ on $\mathbb{H}_{n_1}  \times\cdots\times \mathbb{H}_{n_d}$
  and $g_i=\begin{pmatrix}A_i & B_i\\ C_i & D_i\\ \end{pmatrix}\in \mathrm{Sp}_{n_i}(\mathbb{R})$, we put
  \[f|_{\rho_1\otimes\cdots\otimes\rho_d}[g_1,\ldots, g_d]=\rho_1(C_1Z_1+D_1)^{-1}\otimes\cdots\otimes\rho_d(C_dZ_d+D_d)^{-1}f(g_1Z_1,\ldots,g_dZ_d).\]
  We consider $V_{\mathbf{k},n_1,\ldots,n_d}\coloneqq V_{\mathbf{k},n_1}\otimes\cdots\otimes V_{\mathbf{k},n_d}$-valued differential operators $\mathbb{D}$ on scalar-valued functions of $\mathbb{H}_n$,
  satisfying Condition (A) below on automorphic properties:
  
  \begin{cond}We fix $k$ and $\mathbf{k}$.
    For any holomorphic function $F$ on $\mathbb{H}_n$
    and any $(g_1,\ldots, g_d) \in \mathrm{Sp}_{n_1}(\mathbb{R}) \times\cdots\times \mathrm{Sp}_{n_d}(\mathbb{R})\subset\mathrm{Sp}_{n}(\mathbb{R})$,
    the operator $\mathbb{D}$ satisfies
  \[\mathrm{Res}(\mathbb{D}(F|_k[(g_1,\ldots, g_d)]))
  =(\mathrm{Res}\ \mathbb{D}(F))|_{\det^k\rho_{n_1,\mathbf{k}}\otimes\cdots\otimes\det^k\rho_{n_d,\mathbf{k}}}[g_1,\ldots,g_d]\]
  where $\mathrm{Res}$ means the restriction of a function on $\mathbb{H}_n$ to $\mathbb{H}_{n_1} \times\cdots\times \mathbb{H}_{n_d}$ .
  \end{cond}
  
  This Condition (A) corresponds to Case (I) in \cite{ibukiyama1999differential}.
  The other Case (II) in \cite{ibukiyama1999differential} is a generalization of the Rankin-Cohen operators in \cite{Cohen1975sums},
  and representation-theoretic reinterpretation was given by Ban \cite{ban2006rankin}.
  Using the method of Ban,
  we give representation-theoretic interpretation of the differential operators satisfying Condition (A) in this section. 
  
  \subsection{Howe duality}
  
  Let $G_n=\mathrm{Sp}_n(\mathbb{R})=\{M \in \mathrm{GL}_{2n}(\mathbb{Z}) | MJ_n{}^t\!M=J_n\}$ be the symplectic group
  and $\widetilde{G_n}=\mathrm{Mp}_n(\mathbb{R})$ be the metaplectic group, which is the double cover of $G_n$.
  $G_n$ acts on $\mathbb{H}_n$ in the same way that $\Gamma_n$ does.
  Let $K_n$ be the stabilizer of $\sqrt{-1} \in \mathbb{H}_n$ in $G_n$.
  Then, $K_n$ is a maximal compact subgroup of $G_n$ and isomorphic to the unitary group $\mathrm{U}_n(\mathbb{C})$
  , which is given by $\begin{pmatrix}A & B\\ -B & A\\ \end{pmatrix}\mapsto A-\sqrt{-1}B$. 
  We take a maximal compact subgroup $\widetilde{K_n}$ of $\widetilde{G_n}$ by the inverse image of $K_n$ to $\widetilde{G_n}$.
  
  We put $\mathfrak{g}_n=\mathrm{Lie}(\widetilde{G_n})$, $\mathfrak{k}_n=\mathrm{Lie}(\widetilde{K_n})$ and
  let $\mathfrak{g}_n=\mathfrak{k}_n\oplus\mathfrak{p}_n$ be the Cartan decomposition. We put
  \[
    \kappa_{i,j}=\mathfrak{c}\begin{pmatrix}e_{i,j} & 0\\ 0 & -e_{j,i}\\ \end{pmatrix}\mathfrak{c}^{-1},\quad
    \pi^{+}_{i,j}=\mathfrak{c}\begin{pmatrix}0 & e_{i,j}+e_{j,i}\\ 0 & 0\\ \end{pmatrix}\mathfrak{c}^{-1},\quad \mathrm{and} \quad
    \pi^{-}_{i,j}=\mathfrak{c}\begin{pmatrix}0 & 0\\ e_{i,j}+e_{j,i} & 0\\ \end{pmatrix}\mathfrak{c}^{-1},
  \]
  where $\mathfrak{c}=\dfrac{1}{\sqrt{2}}\begin{pmatrix}1 & \sqrt{-1}\\ \sqrt{-1} & 1\\ \end{pmatrix}\in \mathrm{M}_{2n}(\mathbb{C})$ and
  $e_{i,j} \in \mathrm{M}_{n,n}(\mathbb{C})$ is the matrix whose only non-zero entry is 1 in $(i,j)$-component.
  $\{\kappa_{i,j}\}$ is a basis of $\mathfrak{k}_{n,\mathbb{C}}$.
  Let $\mathfrak{p}^+_n$ (resp. $\mathfrak{p}^-_n$) be the $\mathbb{C}$-span of
  $\{\pi^{+}_{i,j}\}$ (resp. $\{\pi^{-}_{i,j}\}$) in $\mathfrak{g}_{n,\mathbb{C}}$.
  Then $\mathfrak{p}^+_n \oplus \mathfrak{p}^-_n=\mathfrak{p}_{n,\mathbb{C}}$.
  
  \begin{dfn}
    Let $L_{n,k}=\mathbb{C}[\mathrm{M}_{n,k}]$ be the space of polynomials
    in the entries of $(n,k)$-matrix $X=(X_{i,j})$ over $\mathbb{C}$.
    \begin{enumerate}
      \item we define the $(\mathfrak{g}_{n,\mathbb{C}},\widetilde{K_n})$-module structure $l_{n,k}$ on $L_{n,k}$ as follows:
      \begin{align*}
        l_{n,k}(\kappa_{i,j})&=\sum_{s=1}^k X_{i,s}\frac{\partial}{\partial X_{j,s}}+\frac{k}{2}\delta_{i,j},\\
        l_{n,k}(\pi^{+}_{i,j})&=\sqrt{-1}\sum_{s=1}^k X_{i,s}X_{j,s},\\
        l_{n,k}(\pi^{-}_{i,j})&=\sqrt{-1}\sum_{s=1}^k \frac{\partial^2}{\partial X_{i,s}\partial X_{j,s}}.
      \end{align*}
      For $(g,\epsilon)\in \widetilde{\mathrm{U}_n(\mathbb{C})} \cong \widetilde{K_n}$
      $(g \in \mathrm{U}_n(\mathbb{C}), \epsilon \in \{\pm 1\})$ and $f(X)\in L_{n,k}$,
      we define
      \[l_{n,k}((g,\epsilon))f(X)=\epsilon^kf(^tgX)\]
      \item we define the left action of the orthogonal group $\mathrm{O}_k$ on $L_{n,k}$ by the right transition.
    \end{enumerate}
    This representation $(l_{n,k}, L_{n,k})$ is well-defined and we call it the Weil representation.
  
  \end{dfn}
  
  For a representation $(\lambda,V_\lambda)$ in the unitary dual $\widehat{\mathrm{O}_k}$ of $\mathrm{O}_k$,
  we put $L_{n,k}(\lambda)=\mathrm{Hom}_{\mathrm{O}_k}(V_\lambda,L_{n,k})$ and
  induce $(\mathfrak{g}_{n,\mathbb{C}},\widetilde{K_n})$-module structure from that of $L_{n,k}$ to it.
  We denote by $L(\tau)$ the unitary lowest weight $(\mathfrak{g}_{n,\mathbb{C}},\widetilde{K_n})$-module
  with lowest $\widetilde{K_n}$-type $\tau$.
  Let $(\tau,U_\tau)$ be the highest weight module of $\widetilde{K_n}$ with highest module $\tau$ 
  and $(\lambda, V_\lambda)$ be the highest weight module of $\mathrm{O}_k$ with highest module $\lambda$.
  \begin{rem}\quad
    \begin{itemize}
    \item Note that sometimes we identify the irreducible representation of $K_n\cong \mathrm{U}(n)$ (resp. $\widetilde{K_n}\cong \widetilde{\mathrm{U}(n)}$)
    with the  finite dimensional irreducible representation of $\mathrm{GL}_n(\mathbb{C})$ (resp. $\widetilde{\mathrm{GL}_n(\mathbb{C})}$). 
    \item $(\lambda, V_\lambda)$ is not always uniquely determined by the highest weight alone.
    The specific expression follows from Howe \cite[\S 3.6.2]{howe1995perspectives}.
  \end{itemize}
    
  \end{rem}
  
  The following symbols are provided to represent the decomposition of $L_{n,k}$.
  \begin{dfn}
    Let $\Delta_{n,k}$ be the set of Young diagrams $D=(\mu_1, \mu_2, \ldots) $
    whose depth $l(D)$ satisfies  $l(D)\leq\min\{n,k\}$ and if $l(D)>k/2$ then $\mu_j=1$ ($k-l(D)< j \leq l(D)$).
    We put $\mathbbm{1}_n=(\underbrace{1,\ldots,1}_n)$
    and $\emptyset=(0,\cdots,0) \in \Delta_{n,k}$.
    For $D \in \Delta_{n,k}$,
    we define 
    \[\tau_{n,k}(D)=(\mu_1+\tfrac{k}{2},\ldots,\mu_n+\tfrac{k}{2}),\qquad
    \lambda_k(D)= \left\{
      \begin{array}{ll}
        (\mu_1,\ldots,\mu_k) &(l(D)\leq k/2)\\
        (\mu_1,\ldots,\mu_{k-l(D)}) &(l(D)>k/2)
      \end{array}
    \right.. \]
  
  \end{dfn}
  
  \begin{thm}\label{thm:howe}
    \begin{enumerate}
      \item We have $L_{n,k}(\lambda)\neq 0$ if and only if $\lambda=\lambda_k(D)$ for some $D \in \Delta_{n,k}$.
      \item The lowest $\widetilde{K_n}$-type of $L_{n,k}(\lambda_k(D))$ is $\tau_{n,k}(D)$.
      \item Under the joint action of $(\mathfrak{g}_{n,\mathbb{C}},\widetilde{K_n}) \times \mathrm{O}_k$, we have
    \[L_{n,k}\cong\bigoplus_{D \in \Delta_{n,k}}L(\tau_{n,k}(D))\boxtimes V_{\lambda_k(D)}.\]
    \end{enumerate}    
  
  \end{thm}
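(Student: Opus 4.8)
The plan is to recognize Theorem~\ref{thm:howe} as the explicit Howe duality for the reductive dual pair $(\widetilde{G_n},\mathrm{O}_k)$ acting on the Fock model $L_{n,k}=\mathbb{C}[\mathrm{M}_{n,k}]$, and to read off the three assertions from the joint $\mathrm{GL}_n(\mathbb{C})\times\mathrm{O}_k$-structure of the space of orthogonal harmonics. First I would check that the two actions genuinely commute, so that $(\mathfrak{g}_{n,\mathbb{C}},\widetilde{K_n})\times\mathrm{O}_k$ acts on $L_{n,k}$. This is a direct computation: each of $l_{n,k}(\kappa_{i,j})$, $l_{n,k}(\pi^{+}_{i,j})$, $l_{n,k}(\pi^{-}_{i,j})$ involves the column index $s$ only through the $\mathrm{O}_k$-invariant contractions $\sum_{s}X_{i,s}X_{j,s}$, $\sum_s X_{i,s}\partial_{X_{j,s}}$ and $\sum_s\partial_{X_{i,s}}\partial_{X_{j,s}}$, each of which is unchanged under the substitution $X\mapsto Xg$ with $g\in\mathrm{O}_k$. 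Hence $\mathrm{O}_k$ commutes with $l_{n,k}(\mathfrak{g}_{n,\mathbb{C}})$ and with the $\widetilde{K_n}$-action.

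Next I would invoke the $(\mathrm{GL}_n(\mathbb{C}),\mathrm{GL}_k(\mathbb{C}))$-duality (the polynomial form of the Cauchy identity): under left multiplication on the row index and right multiplication on the column index,
\[\mathbb{C}[\mathrm{M}_{n,k}]\cong\bigoplus_{l(D)\leq\min\{n,k\}}F^{n}_{D}\boxtimes F^{k}_{D},\]
where $F^{n}_D$ (resp.\ $F^{k}_D$) is the irreducible polynomial representation of $\mathrm{GL}_n(\mathbb{C})$ (resp.\ $\mathrm{GL}_k(\mathbb{C})$) of highest weight $D$. Since $\{\kappa_{i,j}\}$ is a basis of $\mathfrak{k}_{n,\mathbb{C}}$, comparing $l_{n,k}(\kappa_{i,j})$ with this $\mathfrak{gl}_n$-action identifies the $\widetilde{K_n}\cong\widetilde{\mathrm{U}(n)}$-action with the $\mathrm{GL}_n(\mathbb{C})$-action on the row index, twisted by the central shift $\tfrac{k}{2}\delta_{i,j}$. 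This shift is exactly what produces the $+\tfrac{k}{2}$ in $\tau_{n,k}(D)$ and forces the half-integral (metaplectic) weights, so every $\widetilde{K_n}$-type of $L_{n,k}$ has the form $(\mu_1+\tfrac k2,\dots,\mu_n+\tfrac k2)$ for a partition $\mu$.

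The core step is the orthogonal harmonic decomposition. I would set $\mathcal{H}=\ker l_{n,k}(\mathfrak{p}^-_n)$, the $\mathrm{O}_k$-harmonic polynomials. Because $l_{n,k}(\mathfrak{p}^-_n)$ lowers the $\widetilde{K_n}$-weight while the multiplication operators $l_{n,k}(\pi^+_{i,j})=\sqrt{-1}\,(X\,{}^t\!X)_{i,j}$ raise it, the $\widetilde{K_n}$-finite vectors killed by $\mathfrak{p}^-_n$ are precisely the lowest-$\widetilde{K_n}$-type candidates. The first fundamental theorem for $\mathrm{O}_k$ gives $L_{n,k}^{\mathrm{O}_k}=\mathbb{C}[(X\,{}^t\!X)_{i,j}]=\mathbb{C}[l_{n,k}(\mathfrak{p}^+_n)]$, and separation of variables yields $L_{n,k}=\mathbb{C}[l_{n,k}(\mathfrak{p}^+_n)]\cdot\mathcal{H}$; consequently the $(\mathfrak{g}_{n,\mathbb{C}},\widetilde{K_n})$-submodule generated by any joint highest weight vector $v\in\mathcal{H}$ is swept out by the raising operators $l_{n,k}(\mathfrak{p}^+_n)$ and is the unitary lowest weight module $L(\tau)$ with $\tau$ the $\widetilde{K_n}$-type of $v$. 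Thus each summand $L(\tau_{n,k}(D))\boxtimes V_{\lambda_k(D)}$ is realized once the $\mathrm{GL}_n(\mathbb{C})\times\mathrm{O}_k$-decomposition of $\mathcal{H}$ is known, which is what (1)--(3) encode.

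Finally I would compute that decomposition by restricting the $\mathrm{GL}_k(\mathbb{C})$-factor of the Cauchy decomposition to $\mathrm{O}_k$ and projecting onto $\mathcal{H}$: Littlewood's restriction rule together with the orthogonal modification (straightening) rules sends $F^k_D$ to $V_{\lambda_k(D)}$, the harmonic projection surviving exactly for $D\in\Delta_{n,k}$. This gives simultaneously (1) the list of occurring $\lambda=\lambda_k(D)$, (2) the lowest $\widetilde{K_n}$-type $\tau_{n,k}(D)$ of the multiplicity space $L_{n,k}(\lambda_k(D))=\mathrm{Hom}_{\mathrm{O}_k}(V_{\lambda_k(D)},L_{n,k})$, and (3) the full multiplicity-free decomposition; compare Howe \cite{howe1995perspectives}. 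The main obstacle is this last step outside the stable range $n\le k/2$, where both the separation-of-variables statement and the $\mathrm{GL}_k(\mathbb{C})\!\downarrow\!\mathrm{O}_k$ branching cease to be generic: second fundamental theorem relations among the invariants $(X\,{}^t\!X)_{i,j}$ appear, and the modification rules may send $F^k_D$ to zero or to an $\mathrm{O}_k$-irreducible of strictly smaller highest weight. Verifying that, after harmonic projection, precisely the diagrams $D\in\Delta_{n,k}$ survive --- with the column condition $\mu_j=1$ for $k-l(D)<j\le l(D)$ when $l(D)>k/2$ --- each with multiplicity one, is the delicate combinatorial heart where the definition of $\Delta_{n,k}$ must be matched to the representation theory of the full orthogonal group.
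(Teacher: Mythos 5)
The paper offers no proof of Theorem~\ref{thm:howe} at all---it simply attributes the result to Kashiwara--Vergne and Howe---and your sketch reconstructs exactly the standard argument of those cited sources: commuting dual-pair actions on the Fock model $L_{n,k}$, $(\mathrm{GL}_n,\mathrm{GL}_k)$-duality via the Cauchy identity, separation of variables $L_{n,k}=\mathbb{C}[X\,{}^t\!X]\cdot\mathfrak{H}_{n,k}$ generating lowest weight modules $L(\tau)$ from joint highest weight harmonics, and the Littlewood restriction with modification rules to determine which diagrams survive. Your outline is sound and correctly isolates the genuinely delicate point---verifying that harmonic projection outside the stable range $n\leq k/2$ yields precisely the diagrams $D\in\Delta_{n,k}$ with the column condition $\mu_j=1$ for $k-l(D)<j\leq l(D)$, each with multiplicity one---which is exactly the content established in the references the paper cites.
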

  These results are proved by Kashiwara-Vergne \cite{Kashiwara1978Segal}, and Howe \cite{howe1995perspectives}.
  From this, we get correspondence between the highest weights of $\mathrm{GL}_n(\mathbb{C})$
  and those of $\mathrm{O}_k$, which is called Howe duality.
  \vskip\baselineskip
  
  We fix positive integers $n_1,\ldots,n_d$ and set $n=n_1+\cdots+n_d$.
  We embed $\widetilde{G_{n_1}}\times \cdots \times \widetilde{G_{n_d}}$
  (resp. $\mathfrak{g}_{n_1,\mathbb{C}}\oplus\cdots\oplus\mathfrak{g}_{n_d,\mathbb{C}}$,
  $\widetilde{K_{n_1}}\times\cdots\times \widetilde{K_{n_d}}$)
  diagonally into $\widetilde{G_n}$ (resp. $\mathfrak{g}_{n,\mathbb{C}}$, $\widetilde{K_{n}}$).
  We denote its image by $\widetilde{G_n'}$ (resp. $\mathfrak{g}'_\mathbb{C}$, $\widetilde{K_n'}$).
  We denote by $X^{(s)}$ the indeterminate of $L_{n_s}$.
  Then, we can easily check that the $\mathbb{C}$-isomorphism
  \[\bigotimes_{s=1}^{d}L_{n_s,k}\cong L_{n,k}\]
  given by $X^{(s)}_{i,j}\mapsto X_{n_1+\cdots+n_{s-1}+i,j}$ is the isomorphism as
  $(\mathfrak{g}'_\mathbb{C}$, $\widetilde{K'})\times {\mathrm{O}_k}^d$-module.
  We identify $\bigotimes_{s=1}^d (l_{n_s,k},L_{n_s,k})$ with $(l_{n,k},L_{n,k})$.
  
  \subsection{Pluriharmonic polynomials}
  \begin{dfn}
    If polynomial $f(X) \in L_{n,k}$ satisfies
    \[\qquad l_k(\pi^-_{i,j})f=\sum_{s=1}^k \frac{\partial^2f}{\partial X_{i,s}\partial X_{j,s}}=0 \text{  for any  } i,j\in\{1,\ldots,n\}, \]
    we say that $f(X)$ is pluriharmonic polynomial for $\mathrm{O}_k$.\\
    We denote the set of all pluriharmonic polynomials for $\mathrm{O}_k$ in $L_{n,k}$ by $\mathfrak{H}_{n,k}$.
  
  \end{dfn}
  
  The following proposition is given by Kashiwara-Vergne \cite{Kashiwara1978Segal}.
  
  \begin{prop}
    \begin{enumerate}
      \item $L_{n,k}=L_{n,k}^{\mathrm{O}_k}\cdot \mathfrak{H}_{n,k}$.
      \item $L_{n,k}^{\mathrm{O}_k}$ is the subspace $\mathbb{C}[Z^{(n)}]$ of polynomials in the entries of  $(n, n)$-symmetric matrix $Z^{(n)} =X ^tX$.
      \item Under the joint action of $\widetilde{K_n} \times \mathrm{O}_k$, we have
        \[\mathfrak{H}_{n,k}\cong \bigoplus_{D \in \Delta_{n,k}}U_{\tau_{n,k}(D)}\boxtimes V_{\lambda_k(D)}.\]
    \end{enumerate}
  
  \end{prop}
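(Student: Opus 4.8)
I would prove the three assertions in the order (2), (1), (3), since (1) relies on (2) and (3) can be read off from Theorem~\ref{thm:howe} once $\mathfrak{H}_{n,k}$ is identified with a space of lowest-weight vectors. Assertion (2) is precisely the first fundamental theorem of invariant theory for the orthogonal group. Writing the rows $x_1,\ldots,x_n\in\mathbb{C}^k$ of $X$, the right $\mathrm{O}_k$-action is the simultaneous action on these $n$ vectors, and $(X{}^tX)_{i,j}=\langle x_i,x_j\rangle$ is manifestly invariant; the first fundamental theorem (Weyl; see also \cite{howe1995perspectives}) asserts that these inner products generate all of $L_{n,k}^{\mathrm{O}_k}$, which is exactly the statement $L_{n,k}^{\mathrm{O}_k}=\mathbb{C}[Z^{(n)}]$.

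For (1) I would use the Fischer inner product on $L_{n,k}$: declare the monomials orthogonal with $\|\prod_{i,s}X_{i,s}^{a_{i,s}}\|^2=\prod_{i,s}a_{i,s}!$, so that multiplication by $X_{i,s}$ is adjoint to $\partial/\partial X_{i,s}$. Consequently multiplication by the invariant generator $(X{}^tX)_{i,j}=\sum_s X_{i,s}X_{j,s}$ is adjoint to the operator $\sum_s \partial^2/\partial X_{i,s}\partial X_{j,s}$ that defines $\mathfrak{H}_{n,k}$. Hence on each finite-dimensional homogeneous component, where the Fischer form is positive definite, the orthogonal complement of the ideal $\mathcal{I}=\sum_{i,j}(X{}^tX)_{i,j}L_{n,k}$ is exactly $\mathfrak{H}_{n,k}$. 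This gives the graded decomposition $L_{n,k}=\mathfrak{H}_{n,k}\oplus\mathcal{I}$; writing $f=h+\sum_{i,j}(X{}^tX)_{i,j}g_{i,j}$ and recursing on the lower-degree $g_{i,j}$ yields $L_{n,k}=L_{n,k}^{\mathrm{O}_k}\cdot\mathfrak{H}_{n,k}$ by induction on degree, using (2) to identify $L_{n,k}^{\mathrm{O}_k}$ with $\mathbb{C}[Z^{(n)}]$.

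For (3) I would first observe that $\mathfrak{H}_{n,k}$ is stable under $\widetilde{K_n}\times\mathrm{O}_k$: the right $\mathrm{O}_k$-action commutes with $\sum_s\partial^2/\partial X_{i,s}\partial X_{j,s}$ because $\mathrm{O}_k$ preserves the form contracting the $s$-index, and $[\kappa_{i,j},\pi^-_{a,b}]\in\mathfrak{p}^-_n$ so $\mathfrak{k}_{n,\mathbb{C}}$ preserves $\ker\mathfrak{p}^-_n=\mathfrak{H}_{n,k}$. In each irreducible unitary lowest-weight module $L(\tau_{n,k}(D))$ appearing in Theorem~\ref{thm:howe}(3) one has $L(\tau_{n,k}(D))=U(\mathfrak{p}^+_n)\,U_{\tau_{n,k}(D)}$ with $\mathfrak{p}^-_n$ lowering $\mathfrak{p}^+_n$-degree, so the joint kernel of $\mathfrak{p}^-_n$ is precisely the lowest $\widetilde{K_n}$-type $U_{\tau_{n,k}(D)}$, of type $\tau_{n,k}(D)$ by Theorem~\ref{thm:howe}(2). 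Taking $\mathfrak{p}^-_n$-kernels termwise in the decomposition of Theorem~\ref{thm:howe}(3) then produces $\mathfrak{H}_{n,k}\cong\bigoplus_{D\in\Delta_{n,k}}U_{\tau_{n,k}(D)}\boxtimes V_{\lambda_k(D)}$.

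The step I expect to be the main obstacle is the separation of variables in (1): establishing the Fischer-adjointness carefully and verifying that the orthogonal complement of the invariant ideal is \emph{exactly} the harmonic space requires attention to the Hermitian structure and to positive-definiteness on each graded piece. By contrast, (2) is a direct invocation of the first fundamental theorem, and (3) reduces cleanly to the structure theory of irreducible lowest-weight modules together with Theorem~\ref{thm:howe}.
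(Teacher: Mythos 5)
Your proposal is correct, but note that the paper does not prove this proposition at all: it simply quotes it, attributing it to Kashiwara--Vergne \cite{Kashiwara1978Segal} (with Theorem~\ref{thm:howe} likewise cited from \cite{Kashiwara1978Segal} and \cite{howe1995perspectives}). What you have written is essentially the classical argument from that literature, supplied in full: (2) is indeed the first fundamental theorem for $\mathrm{O}_k$ acting on $n$ copies of $\mathbb{C}^k$ (one small point worth making explicit: the paper's $\mathrm{O}_k$ is the compact group, but a polynomial invariant under the compact form is invariant under its Zariski closure $\mathrm{O}_k(\mathbb{C})$, so the FFT applies verbatim); (1) is the standard separation-of-variables theorem via Fischer duality, and your adjointness computation is right since the generators $(X{}^t\!X)_{i,j}$ have real coefficients, so no conjugation subtlety arises and positive-definiteness on each homogeneous piece gives $L_d=\mathfrak{H}_d\oplus\mathcal{I}_d$, after which your degree induction closes the argument; (3) is the correct reduction of the proposition to Theorem~\ref{thm:howe}. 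The one place where your wording is thinner than the argument requires is the claim that $\ker\mathfrak{p}^-_n$ in $L(\tau_{n,k}(D))$ is exactly the lowest $\widetilde{K_n}$-type: the fact that $\mathfrak{p}^-_n$ lowers $\mathfrak{p}^+_n$-degree does not by itself rule out additional $\mathfrak{p}^-$-null vectors in higher degree; you need unitarity (the adjointness of $\mathfrak{p}^+$ and $\mathfrak{p}^-$ with respect to the invariant Hermitian form, so that a $\mathfrak{p}^-$-null vector orthogonal to $U_{\tau}$ is orthogonal to all of $U(\mathfrak{p}^+)U_\tau$ and hence zero). You do say ``unitary lowest-weight module,'' so this is a matter of making the mechanism explicit rather than a gap. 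Compared with the paper, your route buys a self-contained verification of a black-boxed input; the paper's citation buys brevity, since these facts are exactly the content of the Kashiwara--Vergne theory it is built on.
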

  
  We denote by $\mathfrak{H}_{n,k}(D)$ the subspace of $\mathfrak{H}_{n,k}$
  corresponding to $U_{\tau_{n,k}(D)}\boxtimes V_{\lambda(D)}$ under this isomorphism.
  
  \begin{lem}\label{lem:pluriharmonic}
    \[\mathrm{Hom}_{(\mathfrak{g'}_\mathbb{C}, \widetilde{K'})}
    \left( \bigotimes_{s=1}^{d}  L(\tau_{n_s,k}(D)), L(\tfrac{k}{2}\mathbbm{1}_n)\right)
    \cong \left(\left(\bigotimes_{s=1}^{d} \mathfrak{H}_{n_s,k}(D)\right)^{\mathrm{O}_k}
    \otimes \left(\bigotimes_{s=1}^{d} U^*_{\tau_{n_s,k}(D)}\right)\right)^{\widetilde{K'}}.
    \]
  \end{lem}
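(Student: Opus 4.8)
The plan is to compute the two sides separately and identify each with the same finite‑dimensional invariant space
\[
\left(\bigotimes_{s=1}^{d} V_{\lambda_k(D)}\right)^{\mathrm{O}_k},
\]
where $\mathrm{O}_k$ acts diagonally. The starting observation is that the target is itself an $\mathrm{O}_k$‑invariant space: since $\tau_{n,k}(\emptyset)=\tfrac{k}{2}\mathbbm{1}_n$ and $\lambda_k(\emptyset)$ is the trivial representation of $\mathrm{O}_k$, Theorem~\ref{thm:howe}(3) identifies $L(\tfrac{k}{2}\mathbbm{1}_n)$ with the trivial‑type isotypic component of $L_{n,k}$, that is $L(\tfrac{k}{2}\mathbbm{1}_n)=L_{n,k}^{\mathrm{O}_k}$ (equivalently, the Kashiwara--Vergne description $L_{n,k}^{\mathrm{O}_k}=\mathbb{C}[Z^{(n)}]$).

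For the left-hand side I would transport $L(\tfrac{k}{2}\mathbbm{1}_n)=L_{n,k}^{\mathrm{O}_k}$ through the $(\mathfrak{g}'_\mathbb{C},\widetilde{K'})\times\mathrm{O}_k^d$‑isomorphism $\bigotimes_{s}L_{n_s,k}\cong L_{n,k}$ recorded in the excerpt, under which the single $\mathrm{O}_k$ acting on $L_{n,k}$ is the diagonal of $\mathrm{O}_k^d$ (immediate from $X^{(s)}_{i,j}\mapsto X_{n_1+\cdots+n_{s-1}+i,j}$, since $\mathrm{O}_k$ acts on the common column index $j$). Applying Theorem~\ref{thm:howe}(3) to each factor and taking diagonal $\mathrm{O}_k$‑invariants gives, as $(\mathfrak{g}'_\mathbb{C},\widetilde{K'})$‑modules,
\[
L(\tfrac{k}{2}\mathbbm{1}_n)\cong\bigoplus_{(D_1,\ldots,D_d)}\left(\bigotimes_{s=1}^{d}L(\tau_{n_s,k}(D_s))\right)\otimes\left(\bigotimes_{s=1}^{d}V_{\lambda_k(D_s)}\right)^{\mathrm{O}_k}.
\]
Each summand $\bigotimes_{s}L(\tau_{n_s,k}(D_s))$ is an external tensor product of irreducible unitary lowest weight modules, hence irreducible over $\mathfrak{g}'_\mathbb{C}=\bigoplus_s\mathfrak{g}_{n_s,\mathbb{C}}$, and two of them are isomorphic iff the tuples $(D_s)$ coincide. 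By Schur's lemma only the term $(D,\ldots,D)$ survives, so
\[
\mathrm{Hom}_{(\mathfrak{g}'_\mathbb{C},\widetilde{K'})}\left(\bigotimes_{s=1}^{d}L(\tau_{n_s,k}(D)),\,L(\tfrac{k}{2}\mathbbm{1}_n)\right)\cong\left(\bigotimes_{s=1}^{d}V_{\lambda_k(D)}\right)^{\mathrm{O}_k}.
\]

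For the right-hand side I would use $(W\otimes U^*)^{\widetilde{K'}}=\mathrm{Hom}_{\widetilde{K'}}(U,W)$ together with the Kashiwara--Vergne decomposition $\mathfrak{H}_{n_s,k}(D)\cong U_{\tau_{n_s,k}(D)}\boxtimes V_{\lambda_k(D)}$ as $\widetilde{K_{n_s}}\times\mathrm{O}_k$‑modules. Pulling the diagonal $\mathrm{O}_k$‑invariants past the $\widetilde{K_{n_s}}$‑modules $U_{\tau_{n_s,k}(D)}$ gives
\[
\left(\bigotimes_{s=1}^{d}\mathfrak{H}_{n_s,k}(D)\right)^{\mathrm{O}_k}\cong\left(\bigotimes_{s=1}^{d}U_{\tau_{n_s,k}(D)}\right)\otimes\left(\bigotimes_{s=1}^{d}V_{\lambda_k(D)}\right)^{\mathrm{O}_k},
\]
after which tensoring with $\bigotimes_s U^*_{\tau_{n_s,k}(D)}$ and taking $\widetilde{K'}$‑invariants collapses the $U$‑factors, since $\bigl(\bigotimes_s U_{\tau_{n_s,k}(D)}\otimes\bigotimes_s U^*_{\tau_{n_s,k}(D)}\bigr)^{\widetilde{K'}}=\bigotimes_s\mathrm{End}_{\widetilde{K_{n_s}}}(U_{\tau_{n_s,k}(D)})=\mathbb{C}$ by Schur. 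Thus the right-hand side is again $\bigl(\bigotimes_s V_{\lambda_k(D)}\bigr)^{\mathrm{O}_k}$, and the two sides agree.

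The main obstacle is the input feeding Schur's lemma in the left-hand computation, namely the irreducibility and pairwise inequivalence of the infinite‑dimensional modules $\bigotimes_{s}L(\tau_{n_s,k}(D_s))$. I would obtain irreducibility from the fact that each $L(\tau_{n_s,k}(D_s))$ is an irreducible admissible $(\mathfrak{g}_{n_s,\mathbb{C}},\widetilde{K_{n_s}})$‑module, so that external tensor products over the direct sum $\mathfrak{g}'_\mathbb{C}=\bigoplus_s\mathfrak{g}_{n_s,\mathbb{C}}$ remain irreducible, and inequivalence from the fact that the lowest $\widetilde{K'}$‑type $\bigotimes_s U_{\tau_{n_s,k}(D_s)}$ recovers the tuple $(D_s)$ uniquely. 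With these facts in place both sides reduce to the same finite‑dimensional invariant space, proving the lemma; I would also remark that the resulting isomorphism is the natural one given by restricting a $(\mathfrak{g}'_\mathbb{C},\widetilde{K'})$‑homomorphism to the lowest $\widetilde{K'}$‑type, which is what makes it usable for the differential operators in Theorem~\ref{thm:mainweight}.
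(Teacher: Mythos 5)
Your proof is correct, but it follows a genuinely different route from the paper's. The paper never introduces the multiplicity space $\left(\bigotimes_{s=1}^{d} V_{\lambda_k(D)}\right)^{\mathrm{O}_k}$: it computes the Hom space by one chain of natural isomorphisms, first enlarging the target from $L(\tfrac{k}{2}\mathbbm{1}_n)=L_{n,k}^{\mathrm{O}_k}$ to the full Fock space $L_{n,k}\cong\bigotimes_{s}L_{n_s,k}$, factoring
$\mathrm{Hom}_{(\mathfrak{g}'_\mathbb{C},\widetilde{K'})}\left(\bigotimes_{s}L(\tau_{n_s,k}(D)),L_{n,k}\right)\cong\bigotimes_{s}\mathrm{Hom}_{(\mathfrak{g}_s,\widetilde{K_s})}\left(L(\tau_{n_s,k}(D)),L_{n_s,k}\right)$,
then using the lowest-type correspondence $\mathrm{Hom}_{(\mathfrak{g}_s,\widetilde{K_s})}(L(\tau),L_{n_s,k})\cong\mathrm{Hom}_{\widetilde{K_s}}(U_\tau,\mathfrak{H}_{n_s,k})=\mathrm{Hom}_{\widetilde{K_s}}(U_\tau,\mathfrak{H}_{n_s,k}(D))$, and only at the very end restricting to the $\mathrm{O}_k$-invariant subspace; the resulting isomorphism is by construction the natural one (restriction to the lowest $\widetilde{K'}$-type) that feeds into $I_h$ in Theorem~\ref{thm:maintype}. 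You instead evaluate the two sides independently against the third space $\left(\bigotimes_{s}V_{\lambda_k(D)}\right)^{\mathrm{O}_k}$, decomposing $L(\tfrac{k}{2}\mathbbm{1}_n)$ as a $(\mathfrak{g}'_\mathbb{C},\widetilde{K'})$-module via Theorem~\ref{thm:howe}(3) applied factorwise and applying Schur's lemma twice. This costs you two auxiliary inputs that the paper's chain sidesteps---irreducibility and pairwise inequivalence of the external tensor products $\bigotimes_{s}L(\tau_{n_s,k}(D_s))$ (your argument via injectivity of $D\mapsto\tau_{n,k}(D)$ on $\Delta_{n,k}$ is sound, since $l(D_s)\leq n_s$ there), and the fact that $\mathrm{Hom}$ out of a module generated by its finite-dimensional lowest type commutes with the infinite direct sum---but it buys a concrete dividend: both sides are identified with an $\mathrm{O}_k$-branching multiplicity space, so for instance when $d=2$ one reads off $\dim\left(V_{\lambda_k(D)}\otimes V_{\lambda_k(D)}\right)^{\mathrm{O}_k}=1$, recovering the uniqueness statement of Proposition~\ref{cor:diff}. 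One caveat: as written your argument produces an abstract isomorphism of vector spaces, which suffices for the lemma as stated; for the later use one needs the specific map given by restriction to the lowest type, a point you correctly flag in your final sentence but would still need to check is the map induced by both of your Schur identifications.
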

  \begin{proof}
     We note that $L_{n,k}(\emptyset)=L_{n,k}^{\mathrm{O}_k}
     \cong L(\tfrac{k}{2}\mathbbm{1}_n)$ by Theorem~\ref{thm:howe}. We have
     \begin{eqnarray*}\mathrm{Hom}_{(\mathfrak{g}'_\mathbb{C}, \widetilde{K'})}
        \left(\bigotimes_{s=1}^{d}  L(\tau_{n_s,k}(D)), L_{n,k}\right)
        &\cong&  \bigotimes_{s=1}^{d} \mathrm{Hom}_{(\mathfrak{g}_s, \widetilde{K_s})}
        \left(L(\tau_{n_s,k}(D)),L_{n_s,k}\right)\\
        &\cong&  \bigotimes_{s=1}^{d} \mathrm{Hom}_{ \widetilde{K_s}}
        \left( U_{\tau_{n_s,k}(D)}, \mathfrak{H}_{n_s,k}\right)\\
        &=&  \bigotimes_{s=1}^{d} \mathrm{Hom}_{ \widetilde{K_s}}
        \left( U_{\tau_{n_s,k}(D)}, \mathfrak{H}_{n_s,k}(D)\right)\\
        &\cong& \bigotimes_{s=1}^{d} \left(\mathfrak{H}_{n_s,k}(D)
        \otimes U^*_{\tau_{n_s,k}(D)}\right)^{\widetilde{K_s}}\\
        &\cong& \left(\left(\bigotimes_{s=1}^{d} \mathfrak{H}_{n_s,k}(D)\right)
        \otimes \left(\bigotimes_{s=1}^{d} U^*_{\tau_{n_s,k}(D)}\right)\right)^{\widetilde{K'}}.
     \end{eqnarray*}
     Restricting to the $\mathrm{O}_k$-invariant subspace gives the desired isomorphism.
  \end{proof}
  
  There is a natural injection 
  
  \begin{align*}
    \left(\bigotimes_{s=1}^{d} \mathfrak{H}_{n_s,k}(D)\right)^{\mathrm{O}_k}
    \otimes \left(\bigotimes_{s=1}^{d} U^*_{\tau_{n_s,k}(D)}\right)
    &\hookrightarrow \left(\bigotimes_{s=1}^{d} L_{n_s,k}(D)\right)^{\mathrm{O}_k}
    \otimes \left(\bigotimes_{s=1}^{d} U^*_{\tau_{n_s,k}(D)}\right)\\
    &\hookrightarrow L_{n,k}^{\mathrm{O}_k}\otimes \left(\bigotimes_{s=1}^{d} U^*_{\tau_{n_s,k}(D)}\right)\\
    &\cong \mathbb{C}[Z^{(n)}]\otimes \left(\bigotimes_{s=1}^{d} U^*_{\tau_{n_s,k}(D)}\right).
  \end{align*}
  
  We denote the image of $h \in \left(\bigotimes_{s=1}^{d} \mathfrak{H}_{n_s,k}(D)\right)^{\mathrm{O}_k}
  \otimes \left(\bigotimes_{s=1}^{d} U^*_{\tau_{n_s,k}(D)}\right)$ by $\Phi_h(Z^{(n)})$.
  
  \subsection{Holomorphic automorphic forms}
  \begin{dfn}
    Let $(\tau, U_\tau)$ be an irreducible unitary representation of $\widetilde{K_{n}}$
    and $\widetilde{\Gamma_n}$ be a discrete subgroup of $\widetilde{G_n}$.
    Then, a holomorphic automorphic form of type $\tau$ for $\widetilde{\Gamma_n}$ is
    a $U^*_\tau$-valued $C^\infty$-function $\phi$ on $\widetilde{G_n}$ which satisfies the following conditions:
    \begin{enumerate}
      \item $\phi(\gamma gk)=\tau^*(k)^{-1}\phi(g)$ for $k\in \widetilde{K_{n}}$ and $\gamma\in \widetilde{\Gamma_n}$,
      \item $\phi$ is annihilated by the right derivation of $\mathfrak{p}^-$,
      \item $\phi $ is of moderate growth.
    \end{enumerate}
  \end{dfn}
  
  We denote the space of moderate growth $C^\infty$-functions on $\widetilde{G_n}$ which
  are invariant under left translation by $\widetilde{\Gamma_n}$ by $C_\mathrm{mod}^\infty(\widetilde{\Gamma_n}\backslash \widetilde{G_n})$
  and the space consisting of all holomorphic automorphic forms of type $\tau$ for $\widetilde{\Gamma_n}$
  by $\left[C_\mathrm{mod}^\infty(\widetilde{\Gamma_n}\backslash \widetilde{G_n})\otimes U_\tau^*\right]^{\widetilde{K_{n}},\mathfrak{p}^-=0}$.
  Note that $\mathrm{Hom}_{\widetilde{K_n}}\left(U_\tau,C_\mathrm{mod}^\infty(\widetilde{\Gamma_n}\backslash \widetilde{G_n})\right)
  \cong\left[C_\mathrm{mod}^\infty(\widetilde{\Gamma_n}\backslash \widetilde{G_n})\otimes U_\tau^*\right]^{\widetilde{K_{n}}},$
  We can obtain the following well-known isomorphism.
  
  \begin{prop}\label{prop:holohom}
    We have
    \[\mathrm{Hom}_{(\mathfrak{g}_{n,\mathbb{C}}, \widetilde{K_n})}\left(L(\tau),C_\mathrm{mod}^\infty(\widetilde{\Gamma_n}\backslash \widetilde{G_n})\right)
    \cong\left[C_\mathrm{mod}^\infty(\widetilde{\Gamma_n}\backslash \widetilde{G_n})\otimes U_\tau^*\right]^{\widetilde{K_{n}},\mathfrak{p}^-=0}.\]
  
  \end{prop}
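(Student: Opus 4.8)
The plan is to read off $\mathrm{Res}(\Phi_h(\partial_Z)F)$ as a composition of three intertwining maps and to invoke Proposition~\ref{prop:holohom} twice: once for $\widetilde{G_n}$ and once for the diagonal product $\widetilde{G_n'}$. First I would translate the two data $F$ and $h$ into morphisms of $(\mathfrak{g},\widetilde{K})$-modules. Since $F$ has weight $\tfrac{k}{2}\mathbbm{1}_n$, its lowest $\widetilde{K_n}$-type is the scalar type $\tfrac{k}{2}\mathbbm{1}_n$, so Proposition~\ref{prop:holohom} attaches to it a $(\mathfrak{g}_{n,\mathbb{C}},\widetilde{K_n})$-homomorphism
\[\varphi_F\colon L(\tfrac{k}{2}\mathbbm{1}_n)\longrightarrow C^\infty_{\mathrm{mod}}(\widetilde{\Gamma_n}\backslash\widetilde{G_n}).\]
The hypothesis $l(D)\leq\min\{n_1,\ldots,n_d\}$ ensures $D\in\Delta_{n_s,k}$ for every $s$, so $\tau_{n_s,k}(D)$, $U_{\tau_{n_s,k}(D)}$ and $\mathfrak{H}_{n_s,k}(D)$ are all defined; Lemma~\ref{lem:pluriharmonic} then identifies $h$ with a $(\mathfrak{g}'_{\mathbb{C}},\widetilde{K'})$-homomorphism
\[\psi_h\colon \bigotimes_{s=1}^d L(\tau_{n_s,k}(D))\longrightarrow L(\tfrac{k}{2}\mathbbm{1}_n).\]

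Next I would form the composite. Restricting $\varphi_F$ along the diagonal embedding $(\mathfrak{g}'_{\mathbb{C}},\widetilde{K'})\hookrightarrow(\mathfrak{g}_{n,\mathbb{C}},\widetilde{K_n})$ and following it by the pullback of functions $\mathrm{Res}\colon C^\infty_{\mathrm{mod}}(\widetilde{\Gamma_n}\backslash\widetilde{G_n})\to C^\infty_{\mathrm{mod}}(\widetilde{\Gamma_n'}\backslash\widetilde{G_n'})$ along $\widetilde{G_n'}\hookrightarrow\widetilde{G_n}$, which preserves left $\widetilde{\Gamma_n'}$-invariance and moderate growth, I obtain a $(\mathfrak{g}'_{\mathbb{C}},\widetilde{K'})$-homomorphism
\[\mathrm{Res}\circ\varphi_F\circ\psi_h\colon \bigotimes_{s=1}^d L(\tau_{n_s,k}(D))\longrightarrow C^\infty_{\mathrm{mod}}(\widetilde{\Gamma_n'}\backslash\widetilde{G_n'}).\]
Since $\bigotimes_{s=1}^d L(\tau_{n_s,k}(D))$ is the lowest weight module of $\widetilde{G_n'}\cong\prod_s\widetilde{G_{n_s}}$ with lowest $\widetilde{K'}$-type $\bigotimes_s\tau_{n_s,k}(D)$, applying Proposition~\ref{prop:holohom} to the product group identifies this composite with a holomorphic automorphic form of type $\bigotimes_s\tau_{n_s,k}(D)$ for $\widetilde{\Gamma_n'}$, that is, with an element of $\bigotimes_{s=1}^d M_{\tau_{n_s,k}(D)}(\Gamma_{n_s})$. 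This already yields a vector-valued modular form of the asserted weight, and it remains only to recognize it as $\mathrm{Res}(\Phi_h(\partial_Z)F)$.

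The final and hardest step is this identification, which I would carry out following Ban's computation. Unwinding the isomorphism of Proposition~\ref{prop:holohom} on the source, the form attached to $\mathrm{Res}\circ\varphi_F\circ\psi_h$ is produced by restricting the composite to the lowest $\widetilde{K'}$-type $\bigotimes_s U_{\tau_{n_s,k}(D)}$ and pairing against its dual basis. By the construction of $\Phi_h$ through the injection into $\mathbb{C}[Z^{(n)}]\otimes\bigotimes_s U^*_{\tau_{n_s,k}(D)}$ together with the identification $L(\tfrac{k}{2}\mathbbm{1}_n)\cong L_{n,k}^{\mathrm{O}_k}=\mathbb{C}[Z^{(n)}]$ from Theorem~\ref{thm:howe}, the image under $\psi_h$ of this lowest type is precisely $\Phi_h(Z^{(n)})$, where each variable $Z^{(n)}_{i,j}$ acts on the vacuum as the raising operator $\pi^+_{i,j}$. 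The crux is then the explicit intertwining: under $\varphi_F$ the right derivation by $\pi^+_{i,j}$ on the holomorphic realization coincides with the $(i,j)$-entry of $\partial_Z$, so that $\Phi_h(Z^{(n)})$ becomes the differential operator $\Phi_h(\partial_Z)$ applied to $F$. I expect the main obstacle to be carrying out this translation of the Weil-representation raising operators into $\partial_Z=\left(\frac{1+\delta_{i,j}}{2}\frac{\partial}{\partial z_{i,j}}\right)$ cleanly, tracking the metaplectic normalizations and verifying that the $\mathfrak{p}^-$-equivariance of $\psi_h$ — equivalently the pluriharmonicity built into $\mathfrak{H}_{n_s,k}(D)$ — is exactly what makes the non-holomorphic correction terms cancel after $\mathrm{Res}$, leaving the holomorphic operator. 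Once this is established, holomorphy of $\mathrm{Res}(\Phi_h(\partial_Z)F)$ is automatic as a holomorphic derivative of the holomorphic form $F$, and the proof concludes.
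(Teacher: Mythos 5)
Your proposal does not prove the statement in question. The statement is Proposition~\ref{prop:holohom}, the duality
\[\mathrm{Hom}_{(\mathfrak{g}_{n,\mathbb{C}}, \widetilde{K_n})}\left(L(\tau),C_\mathrm{mod}^\infty(\widetilde{\Gamma_n}\backslash \widetilde{G_n})\right)
\cong\left[C_\mathrm{mod}^\infty(\widetilde{\Gamma_n}\backslash \widetilde{G_n})\otimes U_\tau^*\right]^{\widetilde{K_{n}},\mathfrak{p}^-=0},\]
whereas what you wrote is an argument for Theorem~\ref{thm:mainweight} (essentially the paper's own two-step proof via Theorem~\ref{thm:maintype}, Lemma~\ref{lem:pluriharmonic}, and Ban's computation translating $\pi^+_{i,j}$ into $\partial_Z$). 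Worse, your argument explicitly \emph{invokes} Proposition~\ref{prop:holohom} twice --- once for $\widetilde{G_n}$ and once for the product group --- so read as a proof of that proposition it is circular. Nothing about $D$, $\Phi_h$, $\mathfrak{H}_{n_s,k}(D)$, or $\mathrm{Res}$ belongs in a proof of the proposition, which is a general statement about a single group $\widetilde{G_n}$ and an arbitrary lowest weight module $L(\tau)$; the paper itself records it as a well-known isomorphism with no proof, preceded only by the Frobenius-reciprocity remark $\mathrm{Hom}_{\widetilde{K_n}}\left(U_\tau,C_\mathrm{mod}^\infty(\widetilde{\Gamma_n}\backslash \widetilde{G_n})\right)\cong\left[C_\mathrm{mod}^\infty(\widetilde{\Gamma_n}\backslash \widetilde{G_n})\otimes U_\tau^*\right]^{\widetilde{K_{n}}}$.

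For the record, an actual proof runs as follows. Given an intertwiner $\varphi\colon L(\tau)\to C_\mathrm{mod}^\infty(\widetilde{\Gamma_n}\backslash \widetilde{G_n})$, restrict it to the lowest $\widetilde{K_n}$-type $U_\tau\subset L(\tau)$; by the displayed Frobenius reciprocity this gives an element of $\left[C_\mathrm{mod}^\infty\otimes U_\tau^*\right]^{\widetilde{K_n}}$, and since $\mathfrak{p}^-_n$ annihilates $U_\tau$ inside $L(\tau)$ and $\varphi$ intertwines the $\mathfrak{p}^-_n$-action with right derivation, the image lies in the $\mathfrak{p}^-=0$ part. The map is injective because $U_\tau$ generates $L(\tau)$ over $U(\mathfrak{g}_{n,\mathbb{C}})$. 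For surjectivity, a $\widetilde{K_n}$-map $U_\tau\to C_\mathrm{mod}^\infty$ with image killed by $\mathfrak{p}^-_n$ extends, by the universal property of the generalized Verma module $N(\tau)=U(\mathfrak{g}_{n,\mathbb{C}})\otimes_{U(\mathfrak{k}_{n,\mathbb{C}}\oplus\mathfrak{p}^-_n)}U_\tau$, to a $(\mathfrak{g}_{n,\mathbb{C}},\widetilde{K_n})$-map $N(\tau)\to C_\mathrm{mod}^\infty$; the genuinely nontrivial point, which your proposal never touches, is that this map factors through the irreducible quotient $L(\tau)$ --- here one uses that the $\mathfrak{p}^-$-annihilated, $\tau$-isotypic generator corresponds to a holomorphic function on $\mathbb{H}_n$ of moderate growth, so the cyclic module it generates is a unitarizable lowest weight module, forcing the kernel to contain the maximal submodule of $N(\tau)$. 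Your (correct, and close to the paper's) treatment of $\mathrm{Res}\left(\Phi_h(\partial_Z)F\right)$ should be filed under Theorem~\ref{thm:mainweight}; as submitted, Proposition~\ref{prop:holohom} is left unproved.
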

  
  Under this isomorphism,
  we denote by $I_F\in\mathrm{Hom}_{(\mathfrak{g}_{n,\mathbb{C}}, \widetilde{K_n})}\left(L(\tau),C_\mathrm{mod}^\infty(\widetilde{\Gamma_n}\backslash \widetilde{G_n})\right)$
  the corresponding homomorphism of $F \in \left[C_\mathrm{mod}^\infty(\widetilde{\Gamma_n}\backslash \widetilde{G_n})\otimes U_\tau^*\right]^{\widetilde{K_{n}},\mathfrak{p}^-=0}$.
  Let $\widetilde{\Gamma'}$ be the image of $\widetilde{\Gamma_{n_1}}\times\cdots\times\widetilde{\Gamma_{n_d}}$ in $\widetilde{G_n}$.
  
  \begin{thm}\label{thm:maintype}
    Let F be a holomorphic automorphic form of type $\tfrac{k}{2}\mathbbm{1}_{n}$ for $\widetilde{\Gamma_n}$
    and $D \in \Delta_{n,k}$ be a Young diagram  such that $l(D)\leq\min\{n_1,\ldots,n_d\}$. 
    We put $\pi_{n}^+=(\pi^+_{i,j})\in M_{n}(\mathfrak{p}^+)$.
    We denote by $\mathrm{Res}$ the pullback of the functions on $\widetilde{G_n}$
    by the diagonal embedding $\widetilde{G_{n_1}}\times\cdots\times\widetilde{G_{n_d}} \hookrightarrow \widetilde{G_n}$.
    Then we have 
      $
      \mathrm{Res}\left(\Phi_{h}(\pi_{n}^+)F\right)
      \in
      \bigotimes_{s=1}^d\left[C_\mathrm{mod}^\infty(\widetilde{\Gamma_{n_s}}\backslash \widetilde{G_{n_s}})\otimes
      U_{\tau_{n_s,k}(D)}^*\right]^{\widetilde{K_{n_s}},\mathfrak{p}^-=0}$
      for any  $h \in \left(\left(\bigotimes_{s=1}^{d} \mathfrak{H}_{n_s,k}(D)\right)^{\mathrm{O}_k}
      \otimes \left(\bigotimes_{s=1}^{d} U^*_{\tau_{n_s,k}(D)}\right)\right)^{\widetilde{K'}}$.
  \end{thm}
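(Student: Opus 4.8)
The plan is to follow Ban's method and convert this statement, which concerns a concrete differential operator, into a statement about a composition of $(\mathfrak{g},\widetilde{K})$-homomorphisms, so that the defining properties of the target space fall out automatically from the fact that we are evaluating a homomorphism at a lowest weight vector. Throughout, write $v_0$ for the lowest weight vector of $\bigotimes_{s=1}^{d} L(\tau_{n_s,k}(D))$; it spans the lowest $\widetilde{K'}$-type $\bigotimes_s \tau_{n_s,k}(D)$ and is annihilated by $\bigoplus_s \mathfrak{p}^-_{n_s}$. By Proposition~\ref{prop:holohom} the form $F$ corresponds to $I_F \in \mathrm{Hom}_{(\mathfrak{g}_{n,\mathbb{C}},\widetilde{K_n})}(L(\tfrac{k}{2}\mathbbm{1}_n), C_\mathrm{mod}^\infty(\widetilde{\Gamma_n}\backslash\widetilde{G_n}))$ with $I_F(1)=F$, where $1 \in L^{\mathrm{O}_k}_{n,k}=L(\tfrac{k}{2}\mathbbm{1}_n)$ is the constant polynomial spanning the lowest $\widetilde{K_n}$-type, and by Lemma~\ref{lem:pluriharmonic} the element $h$ corresponds to $\phi_h \in \mathrm{Hom}_{(\mathfrak{g}'_\mathbb{C},\widetilde{K'})}(\bigotimes_s L(\tau_{n_s,k}(D)), L(\tfrac{k}{2}\mathbbm{1}_n))$.

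The bridge identity I would then establish is $\Phi_h(\pi_n^+)F = (I_F\circ\phi_h)(v_0)$. Under the identification of Lemma~\ref{lem:pluriharmonic} the value $\phi_h(v_0)$ is exactly the $\bigotimes_s U^*_{\tau_{n_s,k}(D)}$-valued polynomial $\Phi_h(Z^{(n)}) \in L^{\mathrm{O}_k}_{n,k} = \mathbb{C}[Z^{(n)}]$. Since $l_{n,k}(\pi^+_{i,j})$ acts on $\mathbb{C}[Z^{(n)}]$ as multiplication by $\sqrt{-1}\,Z^{(n)}_{i,j}$ and $\Phi_h$ is homogeneous, we have $\Phi_h(Z^{(n)}) = \Phi_h(\pi_n^+)\cdot 1$ inside $L(\tfrac{k}{2}\mathbbm{1}_n)$ up to a fixed power of $\sqrt{-1}$, which we absorb into the normalization of $\Phi_h(\pi_n^+)$. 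Applying $I_F$ and using that it intertwines the $U(\mathfrak{p}^+)$-action gives $(I_F\circ\phi_h)(v_0)=I_F(\Phi_h(\pi_n^+)\cdot 1)=\Phi_h(\pi_n^+)I_F(1)=\Phi_h(\pi_n^+)F$, the last $\Phi_h(\pi_n^+)$ now acting through the right regular representation on $C_\mathrm{mod}^\infty(\widetilde{\Gamma_n}\backslash\widetilde{G_n})$.

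Next I would compose with the restriction. The pullback $\mathrm{Res}$ along $\widetilde{G_{n_1}}\times\cdots\times\widetilde{G_{n_d}}\hookrightarrow\widetilde{G_n}$ commutes with the right action of $\mathfrak{g}'_\mathbb{C}$ and $\widetilde{K'}$ and sends left $\widetilde{\Gamma_n}$-invariant functions to left $\widetilde{\Gamma'}$-invariant ones, so it is a $(\mathfrak{g}'_\mathbb{C},\widetilde{K'})$-homomorphism $C_\mathrm{mod}^\infty(\widetilde{\Gamma_n}\backslash\widetilde{G_n}) \to C_\mathrm{mod}^\infty(\widetilde{\Gamma'}\backslash\widetilde{G_n'})$. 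Hence $\Psi := \mathrm{Res}\circ I_F\circ\phi_h$ is a $(\mathfrak{g}'_\mathbb{C},\widetilde{K'})$-homomorphism out of $\bigotimes_s L(\tau_{n_s,k}(D))$ with $\Psi(v_0)=\mathrm{Res}(\Phi_h(\pi_n^+)F)$. Because $\Psi$ is a homomorphism and $v_0$ is a lowest weight vector, $\Psi(v_0)$ is $\bigotimes_s U^*_{\tau_{n_s,k}(D)}$-valued, transforms under $\widetilde{K'}=\prod_s\widetilde{K_{n_s}}$ by the contragredient of $\bigotimes_s\tau_{n_s,k}(D)$, is annihilated by the right $\bigoplus_s\mathfrak{p}^-_{n_s}$-derivation, and inherits moderate growth and left $\widetilde{\Gamma'}$-invariance from $F$. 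These are precisely the conditions placing $\Psi(v_0)$ in $[C_\mathrm{mod}^\infty(\widetilde{\Gamma'}\backslash\widetilde{G_n'})\otimes\bigotimes_s U^*_{\tau_{n_s,k}(D)}]^{\widetilde{K'},\mathfrak{p}^-=0}$.

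The step I expect to be the main obstacle is identifying this last space with $\bigotimes_{s=1}^{d}[C_\mathrm{mod}^\infty(\widetilde{\Gamma_{n_s}}\backslash\widetilde{G_{n_s}})\otimes U^*_{\tau_{n_s,k}(D)}]^{\widetilde{K_{n_s}},\mathfrak{p}^-=0}$, which is the stated conclusion. Here one must use that $\widetilde{G_n'}$, $\widetilde{K'}$, $\widetilde{\Gamma'}$ and the relevant $\mathfrak{p}^-$ all split as products over $s$, so that the holomorphicity condition decouples factor by factor; the genuine content is that an element of the product space annihilated by each $\mathfrak{p}^-_{n_s}$ and lying in a single finite-dimensional $\widetilde{K'}$-type really does decompose as a finite sum of pure tensors of forms on the factors, for which the finite-dimensionality of the $\widetilde{K_{n_s}}$-types and Proposition~\ref{prop:holohom} applied on each $\widetilde{G_{n_s}}$ are the tools. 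Some care with the metaplectic covers—checking that $\widetilde{\Gamma'}$ and $\widetilde{K'}$ are the honest product covers sitting inside $\widetilde{G_n}$—enters at exactly this point.
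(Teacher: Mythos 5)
Your proposal is correct and is essentially the paper's own argument: both translate $F$ and $h$ into the homomorphisms $I_F$ and $I_h$ via Proposition~\ref{prop:holohom} and Lemma~\ref{lem:pluriharmonic}, use that $\pi^+_{i,j}$ acts on $L^{\mathrm{O}_k}_{n,k}\cong\mathbb{C}[Z^{(n)}]$ as multiplication by $\sqrt{-1}\,Z^{(n)}_{i,j}$ to get $\Phi_{h_i}(Z^{(n)})=(-\sqrt{-1})^{m}\Phi_{h_i}(\pi_{n}^+)\cdot 1$, and then read the conclusion off the composite $(\mathfrak{g}'_\mathbb{C},\widetilde{K'})$-homomorphism $\mathrm{Res}\circ I_F\circ I_h$ via Proposition~\ref{prop:holohom} again. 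The one cosmetic slip is that evaluating at a single lowest weight vector $v_0$ yields a scalar-valued function, so one should, as the paper does, pair $I_h(w)=\sum_i\langle w,w_i^*\rangle h_i$ against all $w$ in the lowest $\widetilde{K'}$-type (i.e.\ use the identification $\mathrm{Hom}_{\widetilde{K'}}(U,\cdot)\cong[\cdot\otimes U^*]^{\widetilde{K'}}$ that you yourself invoke) to obtain the $\bigotimes_{s} U^*_{\tau_{n_s,k}(D)}$-valued form $(-\sqrt{-1})^{m}\Phi_h(\pi_n^+)\cdot F$.
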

  \begin{proof}
    We fix an isomorphism $U^*_{\tfrac{k}{2}\mathbbm{1}_n}\cong\mathbb{C}$.
    Let $I_F:L(\tfrac{k}{2}\mathbbm{1}_n)\rightarrow C_\mathrm{mod}^\infty(\widetilde{\Gamma_n}\backslash \widetilde{G_n})$
    be the corresponding $(\mathfrak{g}_{n,\mathbb{C}},\widetilde{K_n})$-homomorphism of holomorphic automorphic form $F$.
    Multiplying the isomorphism $U^*_{\tfrac{k}{2}\mathbbm{1}_n}\cong\mathbb{C}$
    by a non-zero constant if necessarily, we assume that $I_F(1)=F$.
    We put 
    \[h=\sum_{i}h_{i}\otimes w_{i}^*
    \qquad \left(h_i\in\left(\bigotimes_{s=1}^{d} \mathfrak{H}_{n_s,k}(D)\right)^{\mathrm{O}_k},
    \  w_{i}^* \in \bigotimes_{s=1}^{d}  U^*_{\tau_{n_s,k}(D)}\right).\]
    Then, 
    we denote by $I_h: \bigotimes_{s=1}^d L(\tau_{n_s,k}(D))\rightarrow L(\tfrac{k}{2}\mathbbm{1}_n)$
    the corresponding $(\mathfrak{g}'_\mathbb{C}, \widetilde{K'})$-homomorphism of $h$ in the isomorphism of Lemma~\ref{lem:pluriharmonic}.
    For $w \in \bigotimes_{s=1}^dU_{\tau_{n_s,k}(D)}\subset\bigotimes_{s=1}^d L(\tau_{n_s,k}(D))$,
    we have
    \[
      I_h(w)=\sum_{i}\langle w,w_{i}^*\rangle h_{i}
      \in\left(\bigotimes_{s=1}^{d} \mathfrak{H}_{n_s,k}(D)\right)^{\mathrm{O}_k}
      \subset L_{n,k}^{\mathrm{O}_k}
      \cong L(\tfrac{k}{2}\mathbbm{1}_n).
    \]
    On the other hand, by the definition of Weil representation, we have
    \[h_{i}=\Phi_{h_{i}}(Z^{(n)})=\Phi_{h_{i}}(-\sqrt{-1}\pi_{n}^+)\cdot 1=(-\sqrt{-1})^{m}\Phi_{h_{i}}(\pi_{n}^+)\cdot 1,\]
    where $m$ is a degree of $\Phi_{h_{i}}$.
    (Note that nonzero elements of $\mathfrak{H}_{n_s,k}(D)$ consist of the same degree homogeneous polynomial.)
  
    Therefore, 
    {\jot=10pt
    \begin{eqnarray*}
      I_F\circ I_h(w)
      &=& I_F\left(\textstyle\sum_{i}\langle w,w_{i}^*\rangle h_{i}\right)\\
      &=& (-\sqrt{-1})^{m}I_F\left(\textstyle\sum_{i}\langle w,w_{i}^*\rangle\Phi_{h_{i}}(\pi_{n}^+)\cdot 1\right)\\
      &=& (-\sqrt{-1})^{m}\textstyle\sum_{i}\langle w,w_{i}^*\rangle\Phi_{h_{i}}(\pi_{n}^+)\cdot I_F\left(1\right)\\
      &=& (-\sqrt{-1})^{m}\textstyle\sum_{i}\langle w,w_{i}^*\rangle\Phi_{h_{i}}(\pi_{n}^+)\cdot F\\
      &=& \langle w,(-\sqrt{-1})^{m}\left(\textstyle\sum_{i}w_{i}^*\otimes\Phi_{h_{i}}\right)(\pi_{n}^+)\cdot F\rangle\\
      &=& \langle w,(-\sqrt{-1})^{m}\Phi_{h}(\pi_{n}^+)\cdot F\rangle.
    \end{eqnarray*}}
    Under the isomorphism of Proposition~\ref{prop:holohom},
    the $(\mathfrak{g}'_{n,\mathbb{C}},\widetilde{K'_n})$-homomorphism
    $\mathrm{Res} \circ I_F\circ I_h: L(\tau_{n,k}(D))
    \rightarrow C_\mathrm{mod}^\infty(\widetilde{\Gamma'}\backslash \widetilde{G'_n})$
    corresponds to 
    $\mathrm{Res}\left((-\sqrt{-1})^{m}\Phi_{h}(\pi_{n}^+)\cdot F\right)$,
    which is an element of $\bigotimes_{s=1}^d\left[C_\mathrm{mod}^\infty(\widetilde{\Gamma_{n_s}}\backslash \widetilde{G_{n_s}})\otimes
      U_{\tau_{n_s,k}(D)}^*\right]^{\widetilde{K_{n_s}},\mathfrak{p}^-=0}$.
    From the above, we have that $\mathrm{Res}\left(\Phi_{h}(\pi_{n}^+) F\right)$ is in
    $\bigotimes_{s=1}^d\left[C_\mathrm{mod}^\infty(\widetilde{\Gamma_{n_s}}\backslash \widetilde{G_{n_s}})\otimes
      U_{\tau_{n_s,k}(D)}^*\right]^{\widetilde{K_{n_s}},\mathfrak{p}^-=0}$.
  \end{proof}
  
  \vskip\baselineskip
  
  We define the action of $\widetilde{G_n}$ on $\mathbb{H}_n$ to be through $G_n$
  and we also denoted The complexification of the representation $\tau$ of $\widetilde{K_n}\cong \widetilde{U_n}$ by the same symbol.
  
  For a representation $(\tau, U_\tau)$ of $\widetilde{K_n}$, we define the representation $(\tau',U_{\tau'}\ (=U_\tau))$ by 
  $\tau'(g)=\tau(^tg^{-1})$.
  There is an isomorphism of  $\tau^*\cong\tau'$ as representations.
  For $f \in M_{\tau}(\Gamma_n)$ , we define a $U_{\tau}$-valued $C^\infty$-function $\phi_f$ on $\widetilde{G_n}$ by
  \[\phi_f(g)=(f|_{\tau'} g )(\sqrt{-1})=\tau( ^tj(g,\sqrt{-1}))f(g\cdot \sqrt{-1})\quad\text{for}\quad g\in\widetilde{G_n}.\]
  
  \begin{prop}\label{prop:automisom}
    The above correspondence $f\mapsto \phi_f$ gives the isomorphism
    \[M_\tau(\Gamma_n)\overset{\sim}{\longrightarrow}
    \left[C_\mathrm{mod}^\infty(\widetilde{\Gamma_n}\backslash \widetilde{G_n})\otimes U_{\tau'}\right]^{\widetilde{K_{n}},\mathfrak{p}^-=0}.\]
  
  \end{prop}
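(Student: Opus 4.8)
The plan is to check directly that $f\mapsto\phi_f$ carries $M_\tau(\Gamma_n)$ into the target space, to construct an explicit inverse, and to isolate the one analytically substantial point, namely the equivalence between holomorphy of $f$ and annihilation of $\phi_f$ by $\mathfrak{p}^-$.

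First I would establish the two automorphy properties. Since $\phi_f(g)=(f|_{\tau'}[g])(\sqrt{-1})$ and the slash $|_{\tau'}$ is a right action, the cocycle identity $f|_{\tau'}[\gamma g]=(f|_{\tau'}[\gamma])|_{\tau'}[g]$ shows that left $\widetilde{\Gamma_n}$-invariance $\phi_f(\gamma g)=\phi_f(g)$ is exactly the automorphy $f|_{\tau'}[\gamma]=f$ defining $f\in M_\tau(\Gamma_n)$ (read through $\tau^*\cong\tau'$). For the $\widetilde{K_n}$-equivariance I would use that each $k\in\widetilde{K_n}$ fixes the base point $\sqrt{-1}$ and that its automorphy factor $u\coloneqq j(k,\sqrt{-1})$ is precisely the image of $k$ under $\widetilde{K_n}\to K_n\cong\mathrm{U}_n(\mathbb{C})\hookrightarrow\mathrm{GL}_n(\mathbb{C})$. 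The cocycle relation $j(gk,\sqrt{-1})=j(g,\sqrt{-1})\,u$ then yields $\phi_f(gk)=\tau({}^tu)\,\phi_f(g)$, and since $\tau'(u)^{-1}=\tau({}^tu)$ this is the defining condition $\phi_f(gk)=\tau'(k)^{-1}\phi_f(g)$ of $\left[C_\mathrm{mod}^\infty(\widetilde{\Gamma_n}\backslash\widetilde{G_n})\otimes U_{\tau'}\right]^{\widetilde{K_n}}$.

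The hard part will be the equivalence $\mathfrak{p}^-\phi_f=0\iff f$ holomorphic, which pins down the image as the $\mathfrak{p}^-$-annihilated subspace. Here I would compute the right-regular action of the lowering operators $\pi^-_{i,j}$ as differential operators on $\widetilde{G_n}$ and restrict to $\widetilde{K_n}$-equivariant functions. Conceptually $g\mapsto g\cdot\sqrt{-1}$ realizes $\widetilde{G_n}/\widetilde{K_n}\cong\mathbb{H}_n$ with $\mathfrak{p}^+\cong T^{1,0}\mathbb{H}_n$ and $\mathfrak{p}^-\cong T^{0,1}\mathbb{H}_n$; under the dictionary $\phi_f\leftrightarrow f$ the operator $\mathfrak{p}^-$ becomes the covariant $\overline{\partial}$ of the automorphic bundle attached to $\tau'$, and because the automorphy factor $\tau'(j(g,\sqrt{-1}))$ depends holomorphically on $g\cdot\sqrt{-1}$ this covariant $\overline{\partial}$ agrees with the ordinary $\overline{\partial}$ applied to $f$. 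Hence $\mathfrak{p}^-\phi_f=0$ is equivalent to $\partial f/\partial\overline{Z}=0$. I expect the explicit matching of $\pi^-_{i,j}$ with $\partial/\partial\overline{z_{i,j}}$, up to the factor $\tau'(j(g,\sqrt{-1}))$, to be the main obstacle; it is closely related to the computation underlying Proposition~\ref{prop:holohom} and should be organized so that the $\widetilde{K_n}$-type $\tau'$ is carried symbolically. Moderate growth of $\phi_f$ then follows from the usual bound on the Fourier coefficients of $f$ (the Koecher principle for $n\geq 2$ makes a modular form of moderate growth), placing $\phi_f$ in the target.

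Finally I would exhibit the inverse. Given $\Phi$ in the target, set $f(Z)=\tau({}^tj(g_Z,\sqrt{-1}))^{-1}\Phi(g_Z)$ for any $g_Z\in\widetilde{G_n}$ with $g_Z\cdot\sqrt{-1}=Z$; the $\widetilde{K_n}$-equivariance of $\Phi$ and the cocycle relation make this independent of the choice of $g_Z$, so $f$ is a well-defined $C^\infty$ function on $\mathbb{H}_n$. Reversing the computations above, left $\widetilde{\Gamma_n}$-invariance of $\Phi$ gives $f|_{\tau'}[\gamma]=f$, the condition $\mathfrak{p}^-\Phi=0$ gives holomorphy of $f$, and moderate growth gives the required boundedness, so $f\in M_\tau(\Gamma_n)$ with $\phi_f=\Phi$. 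Injectivity is immediate, since $\tau({}^tj(g,\sqrt{-1}))$ is invertible and $g\cdot\sqrt{-1}$ exhausts $\mathbb{H}_n$, so $\phi_f=0$ forces $f=0$. The metaplectic cover enters only through the factor $\epsilon^k$ of the Weil representation, which matches the half-integral part $\tfrac{k}{2}\mathbbm{1}_n$ of the weight $\tau$ and is tracked consistently on both sides.
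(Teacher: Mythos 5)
The paper itself gives no proof of this proposition; it is quoted as the classical dictionary between vector-valued Siegel modular forms and holomorphic automorphic forms on the group, so there is no argument to compare yours against line by line. Your overall route is the standard one that the paper implicitly invokes: equivariance via the cocycle identity, the inverse via a section $Z\mapsto g_Z$ of $g\mapsto g\cdot\sqrt{-1}$, the identification $\mathfrak{p}^-\cong T^{0,1}\mathbb{H}_n$ turning $\mathfrak{p}^-$-annihilation into the Cauchy--Riemann equations (the $\mathfrak{p}^+$ counterpart of this computation is exactly Lemma~\ref{lem:deri}), and moderate growth from the Fourier expansion together with the Koecher principle. Your right-$\widetilde{K_n}$-equivariance computation is also correct as stated: with the paper's formula one gets $\phi_f(gk)=\tau({}^t\!u)\phi_f(g)=\tau'(u)^{-1}\phi_f(g)$ for $u=j(k,\sqrt{-1})$.

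There is, however, one genuine gap, at your very first step. You assert that left $\widetilde{\Gamma_n}$-invariance of $\phi_f$ ``is exactly the automorphy $f|_{\tau'}[\gamma]=f$ defining $f\in M_\tau(\Gamma_n)$ (read through $\tau^*\cong\tau'$).'' The first half is fine: since $|_{\tau'}$ is a right action and $g\mapsto (F|_{\tau'}g)(\sqrt{-1})$ determines $F$, invariance of $\phi_f$ is equivalent to $f|_{\tau'}[\gamma]=f$. But the condition defining $M_\tau(\Gamma_n)$ is $f|_{\tau}[\gamma]=f$, and the parenthetical appeal to $\tau^*\cong\tau'$ cannot convert one into the other: that isomorphism identifies the two $\widetilde{K_n}$-equivariance conditions, where unitarity is available, whereas the $\Gamma_n$-automorphy involves the factor $j(\gamma,Z)\in\mathrm{GL}_n(\mathbb{C})$, which is not unitary, and $\tau$, $\tau'$ are non-isomorphic as representations of $\mathrm{GL}_n(\mathbb{C})$. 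Concretely, for $f\in M_\tau(\Gamma_n)$ a direct computation gives
\[
\phi_f(\gamma g)=\tau\!\left({}^t\!j(g,\sqrt{-1})\right)\tau\!\left({}^t\!J\,J\right)f(g\cdot\sqrt{-1}),\qquad J=j(\gamma,\,g\cdot\sqrt{-1}),
\]
and $\tau({}^t\!J J)\neq\mathrm{id}$ in general; already for $\tau=\det^k$ one has $\tau'=\det^{-k}$ and $f|_{\tau'}[\gamma]=\det(j(\gamma,Z))^{2k}f\neq f$. So under the conventions as literally stated, exactly one of your two equivariances can hold at a time: defining $\phi_f$ via $|_{\tau'}$ gives the right $\widetilde{K_n}$-type $\tau'$ but breaks $\Gamma_n$-invariance, while defining it via $|_{\tau}$ gives $\Gamma_n$-invariance but the $\widetilde{K_n}$-type $u\mapsto\tau(u)^{-1}=\tau({}^t\bar{u})$, which differs from $\tau'(u)^{-1}=\tau({}^t\!u)$ by the conjugation $u\mapsto\bar{u}$. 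This wrinkle originates in the source statement itself, but your proof does not notice it: you must either fix the convention (define $\phi_f=(f|_{\tau}g)(\sqrt{-1})$ and then implement the identification $\tau^*\cong\tau'$ honestly, via the unitary structure --- a conjugate-linear step that has to be tracked through the whole argument, including your inverse map, which as written again produces a form satisfying $f|_{\tau'}[\gamma]=f$ rather than $f\in M_\tau(\Gamma_n)$) or state explicitly the reading of the slash under which both equivariances hold simultaneously. The remainder of your outline (injectivity, the $\overline{\partial}$-equivalence, moderate growth) is sound once this is repaired.
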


  By the above isomorphism and Theorem~\ref{thm:maintype},
  We have the following theorem, which can be proved in the same way as Ban \cite[Theorem 4.2.5]{ban2006rankin}.
  See it for a detailed discussion.
  \begin{thm}\label{thm:mainweight}
    Let F be a holomorphic automorphic form of weight $\tfrac{k}{2}\mathbbm{1}_{n}$ for $\Gamma_n$
    and $D \in \Delta_{n,k}$ be a Young diagram  such that $l(D)\leq\min\{n_1,\ldots,n_d\}$. 
    We put $\partial_Z=\left(\frac{1+\delta_{i,j}}{2}\frac{\partial}{\partial z_{i,j}}\right)$.
    We denote by $\mathrm{Res}$ be the restriction of a function
    on $\mathbb{H}_n$ to $\mathbb{H}_{n_1} \times\cdots\times \mathbb{H}_{n_d}$.
    Then we have
      $\mathrm{Res}\left(\Phi_{h}(\partial_Z)F\right)\in \bigotimes_{s=1}^d M_{\tau_{n_s,k}(D)}(\Gamma_{n_s})$ 
       for any $h \in \left(\left(\bigotimes_{s=1}^{d} \mathfrak{H}_{n_s,k}(D)\right)^{\mathrm{O}_k}
      \otimes \left(\bigotimes_{s=1}^{d} U^*_{\tau_{n_s,k}(D)}\right)\right)^{\widetilde{K'}}$.
  \end{thm}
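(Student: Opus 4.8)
The plan is to deduce this classical statement from its metaplectic counterpart, Theorem~\ref{thm:maintype}, by transporting both sides through the isomorphism of Proposition~\ref{prop:automisom}, following Ban \cite[Theorem 4.2.5]{ban2006rankin}. First I would apply Proposition~\ref{prop:automisom} to the given $F$: since $\tau=\tfrac{k}{2}\mathbbm{1}_n$ is one-dimensional, $F\in M_{\tfrac{k}{2}\mathbbm{1}_n}(\Gamma_n)$ corresponds to a holomorphic automorphic form $\phi_F\in\bigl[C_\mathrm{mod}^\infty(\widetilde{\Gamma_n}\backslash\widetilde{G_n})\otimes U_{\tau'}\bigr]^{\widetilde{K_n},\mathfrak{p}^-=0}$ with $\phi_F(g)=\tau({}^tj(g,\sqrt{-1}))F(g\cdot\sqrt{-1})$, and in particular $\phi_F$ is a holomorphic automorphic form of type $\tfrac{k}{2}\mathbbm{1}_n$ in the sense of Theorem~\ref{thm:maintype}. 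Applying that theorem then gives, for every admissible $h$,
\[
\mathrm{Res}\bigl(\Phi_h(\pi_n^+)\,\phi_F\bigr)\in\bigotimes_{s=1}^d\bigl[C_\mathrm{mod}^\infty(\widetilde{\Gamma_{n_s}}\backslash\widetilde{G_{n_s}})\otimes U^*_{\tau_{n_s,k}(D)}\bigr]^{\widetilde{K_{n_s}},\mathfrak{p}^-=0}.
\]

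The heart of the argument is then the dictionary between the abstract raising operators and holomorphic differentiation. Evaluating $\phi_F$ through the defining formula and differentiating, one checks that the right derivation of $\widetilde{G_n}$ by $\pi^+_{i,j}$, applied to $\phi_F$ and read off at the base point via $g\mapsto g\cdot\sqrt{-1}$, reproduces $\tfrac{1+\delta_{i,j}}{2}\tfrac{\partial}{\partial z_{i,j}}F$ up to a fixed nonzero scalar; the symmetrizing factor $\tfrac{1+\delta_{i,j}}{2}$ is forced by $\pi^+_{i,j}=\pi^+_{j,i}$ (recall $\pi^+_{i,j}$ is built from $e_{i,j}+e_{j,i}$), so that the matrix $\pi_n^+=(\pi^+_{i,j})$ of operators corresponds precisely to the symmetric derivative $\partial_Z$. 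Substituting the homogeneous polynomial $\Phi_h$ then identifies the action of $\Phi_h(\pi_n^+)$ on $\phi_F$ with that of $\Phi_h(\partial_Z)$ on $F$, up to the overall constant $(-\sqrt{-1})^m$ (with $m=\deg\Phi_h$) already encountered in the proof of Theorem~\ref{thm:maintype}; this scalar is irrelevant for membership.

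It remains to translate the target space back. Applying Proposition~\ref{prop:automisom} in the $n_s$-variable to each tensor factor identifies $\bigl[C_\mathrm{mod}^\infty(\widetilde{\Gamma_{n_s}}\backslash\widetilde{G_{n_s}})\otimes U^*_{\tau_{n_s,k}(D)}\bigr]^{\widetilde{K_{n_s}},\mathfrak{p}^-=0}$ with $M_{\tau_{n_s,k}(D)}(\Gamma_{n_s})$ --- here the vanishing condition $\mathfrak{p}^-=0$ is exactly the holomorphy of the classical form, and $\widetilde{\Gamma'}$-invariance descends to $\Gamma_{n_s}$-modularity on each factor --- so the displayed membership becomes $\mathrm{Res}(\Phi_h(\partial_Z)F)\in\bigotimes_{s=1}^d M_{\tau_{n_s,k}(D)}(\Gamma_{n_s})$, as claimed. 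The main obstacle is the scalar computation in the dictionary step: verifying the exact normalization (both the $\tfrac{1+\delta_{i,j}}{2}$ factors and the power of $-\sqrt{-1}$) relating the Lie-algebra action of $\pi_n^+$ to the holomorphic operator $\partial_Z$. This is precisely the content of Ban's computation, and since the paper defers to \cite[Theorem 4.2.5]{ban2006rankin} I would carry it out only to the extent of pinning down these constants.
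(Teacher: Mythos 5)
Your outer architecture is exactly the paper's: pass from $F$ to $\phi_F$ via Proposition~\ref{prop:automisom}, apply Theorem~\ref{thm:maintype} to get $\mathrm{Res}\left(\Phi_h(\pi_n^+)\phi_F\right)$ in the tensor product of spaces of holomorphic automorphic forms, and translate back factorwise. The gap is in your ``dictionary'' step, and it is not merely a constant you can defer. You claim that right derivation by $\pi^+_{i,j}$, ``read off at the base point,'' reproduces $\tfrac{1+\delta_{i,j}}{2}\tfrac{\partial}{\partial z_{i,j}}F$ up to a fixed nonzero scalar, so that $\Phi_h(\pi_n^+)$ on $\phi_F$ matches $\Phi_h(\partial_Z)$ on $F$ up to $(-\sqrt{-1})^m$. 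The actual identity (the paper's Lemma~\ref{lem:deri}, due to Ban) is $(\pi^+_{i,j}\tilde f)(g_Z)=4\left(\sqrt{Y}\cdot\widetilde{\partial_Z f}(g_Z)\cdot\sqrt{Y}\right)_{i,j}$: the derivative appears conjugated by the point-dependent matrix $\sqrt{Y}$, which is the identity only at the single point $Z=\sqrt{-1}$. Since $\Phi_h(\pi_n^+)$ is a polynomial of degree $m$ applied by \emph{iterated} right derivations of functions on the whole group, you cannot evaluate at the base point between derivations, and the $\sqrt{Y}$-factors do not drop out. Moreover $\phi_F=\widetilde{\det^{\frac{k}{2}}}\cdot\widetilde{F}$, and $\pi^+_{i,j}$ also differentiates the automorphy-factor part, contributing $(\pi^+_{i,j}\tilde{\tau'})(g_Z)=-\sqrt{-1}\,\tilde{\tau'}(g_Z)\,d\tau(e_{i,j}+e_{j,i})$, which for $\tau=\det^{\frac{k}{2}}$ is a nonzero multiple of $k\delta_{i,j}$; by Leibniz this produces genuine cross terms, not a normalization constant.

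The paper resolves this with two ideas absent from your proposal. First, one isolates the part of $\Phi_h(\pi_n^+)\cdot\phi_F$ of top degree in $\sqrt{Y}$, namely $4^m\det(\sqrt{Y})^{\frac{k}{2}}\,\Phi_h(\sqrt{Y}\,\partial_Z\sqrt{Y})\cdot\phi_F$, and it is precisely the $\widetilde{K'}$-invariance of $h$ --- which your dictionary never invokes --- that allows the conjugation to be pulled out after restriction as the factor $\bigotimes_{s=1}^{d}\rho_{n_s,D}(\sqrt{Y_s})$, leaving $\mathrm{Res}\left(\Phi_h(\partial_Z)\cdot\phi_F\right)$. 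Second, the lower-degree remainder $R$ is not shown to vanish by any computation: writing $f=4^m\mathrm{Res}\left(\Phi_h(\partial_Z)\cdot\phi_F\right)+\left(\bigotimes_{s=1}^{d}\rho_{n_s,D}(\sqrt{Y_s})^{-1}\right)R$, where $f$ is the classical form furnished by Proposition~\ref{prop:automisom}, one uses the a priori holomorphy of $f$ to force $R=0$, since the second term depends non-holomorphically on $\sqrt{Y_s}$. Only \emph{a posteriori} does the relation become ``a fixed scalar'' ($4^{-m}$) on the restricted diagonal. So the step you describe as pinning down constants is in fact the entire content of the proof, and as stated --- a pointwise base-point identity up to scalar --- it is false for $m\geq 2$ and incomplete even for $m=1$.
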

  Before the proof, we provide some notations and a lemma.
  
  \begin{dfn}
  For a holomorphic function $f$ on $\mathbb{H}_n$ and a representation $(\tau, U_\tau)$ of $\widetilde{U_n}$,
  we define the function $\tilde{f}$ on $\widetilde{G_n}$
  and  the representation $(\widetilde{\tau}, U_\tau)$ of $\widetilde{G_n}$ as follow:
  \[\tilde{f}(g)=f(g\cdot\sqrt{-1}) \qquad g \in \widetilde{G_n},\]
  \[\tilde{\tau}((g_0,\epsilon))=\tau((j(g_0,\sqrt{-1}),\epsilon))\qquad g_0 \in G_n,\ \epsilon\in\{\pm 1\} .\]
  \end{dfn}

  For $Z=X+\sqrt{-1}Y\in\mathbb{H}_n$,
  we put $g_{Z,0}^{(n)}=\begin{pmatrix} \sqrt{Y} & X\sqrt{Y}^{-1} \\ 0& \sqrt{Y}^{-1}\\ \end{pmatrix}\in G_n$.
  We define the section of $\widetilde{G_n}\ni g\mapsto g\cdot\sqrt{-1}\in\mathbb{H}_n$ by
  \[Z \mapsto (g_{Z,0}^{(n)},(\det(\sqrt{Y})^\frac{1}{2}))\coloneqq g^{(n)}_Z.\]
  We have 
  \[j(g_{Z,0}^{(n)},\sqrt{-1})=\sqrt{Y}^{-1}\qquad g^{(n)}_{Z}\cdot\sqrt{-1}=Z.\]
  For $D=(\mu_1,\mu_2,\ldots) \in \Delta_{n,k}$,
  we denote by $\rho_{n,D}$ the representation of $\widetilde{GL_n(\mathbb{C})} $ with a dominant integral weight $D$.
  (Then, $\tau_{n,k}(D)=\det^{\frac{k}{2}}\otimes\rho_{n,D}$.)
  
  \begin{lem}[Ban \cite{ban2006rankin}]\label{lem:deri}
    For a holomorphic function $f$ on $\mathbb{H}_n$
    and a representation $(\tau, U_\tau)$ of $\widetilde{U_n}$,
    we have
    \begin{eqnarray*}
    (\pi_{i,j}^+\tilde{f})(g)&=&4(^t\tilde{\omega}(g)\cdot\widetilde{\partial_Zf}(g)\cdot\tilde{\omega}(g))_{i,j},\\
    (\pi_{i,j}^+\tilde{\tau'})(g)&=&-\sqrt{-1}\tilde{\tau'}(g)\cdot d\tau((e_{i,j}+e_{j,i})\cdot\overline{\tilde{\omega}(g)}^{-1}\cdot\tilde{\omega}(g)),
    \end{eqnarray*}
    where  $\tilde{\omega}(g)=\!^tj(g_0,\sqrt{-1})^{-1}$ for $g=(g_0,\epsilon)\in\widetilde{G_n}$.
    
    In particular, for $Z=X+\sqrt{-1}Y$, we have
    \begin{eqnarray*}
    (\pi_{i,j}^+\tilde{f})(g_Z)&=&4(\sqrt{Y}\cdot\widetilde{\partial_Zf}(g_Z)\cdot \sqrt{Y})_{i,j},\\
    (\pi_{i,j}^+\tilde{\tau'})(g_Z)&=&-\sqrt{-1}\tilde{\tau'}(g_Z)\cdot d\tau((e_{i,j}+e_{j,i})).
    \end{eqnarray*}
  \end{lem}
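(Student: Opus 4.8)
The plan is to derive both identities by differentiating along the one-parameter subgroup $t\mapsto\exp(t\pi^+_{i,j})$ acting on the right, i.e. by computing $(\pi^+_{i,j}\tilde f)(g)=\frac{d}{dt}\big|_{t=0}\tilde f(g\exp(t\pi^+_{i,j}))$ and likewise for $\tilde\tau'$, following Ban \cite{ban2006rankin}. The starting point is the explicit matrix form of the raising operator: writing $S_{i,j}=e_{i,j}+e_{j,i}$ and conjugating by $\mathfrak{c}$ one finds
\[\pi^+_{i,j}=\tfrac12\begin{pmatrix}-\sqrt{-1}\,S_{i,j}&S_{i,j}\\ S_{i,j}&\sqrt{-1}\,S_{i,j}\end{pmatrix}\in\mathfrak{g}_{n,\mathbb{C}},\]
whose real and imaginary parts both lie in $\mathfrak{sp}_n(\mathbb{R})$. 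Two classical facts about the symplectic action on $\mathbb{H}_n$ do the bulk of the work: for $\xi=\left(\begin{smallmatrix}\alpha&\beta\\\gamma&\delta\end{smallmatrix}\right)$ the generated vector field at $W\in\mathbb{H}_n$ is $\alpha W+\beta-W\gamma W-W\delta$, and the differential of $W\mapsto g\cdot W$ sends a symmetric tangent matrix $H$ to ${}^tj(g,W)^{-1}Hj(g,W)^{-1}$; this last identity reduces, after clearing $j(g,W)$ and using $Z={}^tZ$, to the symplectic relations ${}^tac={}^tca$ and ${}^tad-{}^tcb=I$.

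For the first formula I would evaluate the vector field of $\pi^+_{i,j}$ at the base point $\sqrt{-1}\,1_n$; the four blockwise contributions each collapse to $\tfrac12 S_{i,j}$, giving the value $2S_{i,j}$. Combining this with $g\exp(t\pi^+_{i,j})\cdot\sqrt{-1}=g\cdot(\exp(t\pi^+_{i,j})\cdot\sqrt{-1})$ and the differential above yields
\[\frac{d}{dt}\Big|_{0}\bigl(g\exp(t\pi^+_{i,j})\cdot\sqrt{-1}\bigr)=2\,\tilde\omega(g)\,S_{i,j}\,{}^t\tilde\omega(g),\]
since ${}^tj(g,\sqrt{-1})^{-1}=\tilde\omega(g)$ and $j(g,\sqrt{-1})^{-1}={}^t\tilde\omega(g)$. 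Because $f$ is holomorphic the chain rule reads $\frac{d}{dt}\big|_0 f(Z+tH)=\mathrm{tr}(\partial_Z f\cdot H)$ for symmetric $H$, where the weights $\tfrac{1+\delta_{i,j}}{2}$ in $\partial_Z$ account precisely for the symmetry of $Z$. Applying this with $H=2\tilde\omega(g)S_{i,j}{}^t\tilde\omega(g)$ and using that $\partial_Zf$, hence ${}^t\tilde\omega\,\partial_Zf\,\tilde\omega$, is symmetric to merge the $e_{i,j}$ and $e_{j,i}$ terms produces a second factor $2$ and the asserted $4\bigl({}^t\tilde\omega(g)\,\widetilde{\partial_Zf}(g)\,\tilde\omega(g)\bigr)_{i,j}$.

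For the second formula I would first observe the clean reduction $\tilde\tau'(g)=\tau(\tilde\omega(g))$, so that it suffices to differentiate $t\mapsto\tilde\omega(g\exp(t\pi^+_{i,j}))$. Using the cocycle $j(g_1g_2,W)=j(g_1,g_2\cdot W)j(g_2,W)$ together with the vector-field value $2S_{i,j}$ and $\frac{d}{dt}\big|_0 j(\exp(t\pi^+_{i,j}),\sqrt{-1})=\sqrt{-1}\,S_{i,j}$ gives $\frac{d}{dt}\big|_0 j(g\exp(t\pi^+_{i,j}),\sqrt{-1})=2c\,S_{i,j}+\sqrt{-1}\,j(g,\sqrt{-1})S_{i,j}$, with $c$ the lower-left block of $g$. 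The only real computation is eliminating $c$: writing $J=j(g,\sqrt{-1})=\sqrt{-1}\,c+d$ one has $c=(J-\bar J)/(2\sqrt{-1})$, hence ${}^tc=(\tilde\omega(g)^{-1}-\overline{\tilde\omega(g)}^{-1})/(2\sqrt{-1})$. Substituting into $\frac{d}{dt}\big|_0\tilde\omega=-\tilde\omega\,{}^t\!\bigl(\tfrac{d}{dt}\big|_0 J\bigr)\,\tilde\omega$ and simplifying collapses everything to $-\sqrt{-1}\,\tilde\omega(g)S_{i,j}\overline{\tilde\omega(g)}^{-1}\tilde\omega(g)$; differentiating $\tau$ along this curve then gives $-\sqrt{-1}\,\tilde\tau'(g)\,d\tau\bigl(S_{i,j}\,\overline{\tilde\omega(g)}^{-1}\tilde\omega(g)\bigr)$, as claimed.

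The \emph{in particular} statements are then immediate specializations at $g=g^{(n)}_Z$: there $\tilde\omega(g^{(n)}_Z)=\sqrt{Y}$ is real symmetric, so ${}^t\tilde\omega=\sqrt{Y}$ gives $4(\sqrt{Y}\,\widetilde{\partial_Zf}\,\sqrt{Y})_{i,j}$, while $\overline{\tilde\omega}^{-1}\tilde\omega=1_n$ collapses the second formula to $-\sqrt{-1}\,\tilde\tau'(g^{(n)}_Z)\,d\tau(S_{i,j})$. I expect the main obstacle to be purely organizational: keeping the metaplectic signs consistent when passing between $\tau$, $\tau'$ and the cover $\widetilde{G_n}$ (these are locally constant near the identity and so drop out of the derivatives), and executing the $c$-elimination cleanly; the geometric input — the vector field of the action, its differential, and the holomorphic chain rule — is entirely classical.
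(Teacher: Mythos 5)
Your proposal is correct, and all the key computations check out: the block form $\pi^+_{i,j}=\tfrac12\begin{pmatrix}-\sqrt{-1}S_{i,j}&S_{i,j}\\ S_{i,j}&\sqrt{-1}S_{i,j}\end{pmatrix}$, the vector-field value $2S_{i,j}$ at $\sqrt{-1}\,1_n$, the trace identity $\mathrm{tr}(S_{i,j}M)=2M_{i,j}$ for symmetric $M$ yielding the factor $4$, and the elimination of $c$ via ${}^tc=(\tilde\omega^{-1}-\overline{\tilde\omega}^{-1})/(2\sqrt{-1})$ collapsing to $-\sqrt{-1}\,\tilde\omega S_{i,j}\overline{\tilde\omega}^{-1}\tilde\omega$ all verify. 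The paper states this lemma without proof, citing Ban, and your one-parameter-subgroup differentiation is precisely the standard argument from that source, so there is nothing to compare beyond noting that your remark on metaplectic signs is the right disposal of the only genuine subtlety (the cover is \'etale, so the Lie-algebra-level derivatives descend).
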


  \begin{proof}[Proof of Theorem~\ref{thm:mainweight}]
    From Theorem~\ref{thm:maintype}, we have 
    \[\mathrm{Res}\left(\Phi_{h}(\pi_{n}^+)\cdot \phi_F\right) \in
    \bigotimes_{s=1}^d\left[C_\mathrm{mod}^\infty(\widetilde{\Gamma_{n_s}}\backslash \widetilde{G_{n_s}})\otimes
      U_{\tau_{n_s,k}(D)}^*\right]^{\widetilde{K_{n_s}},\mathfrak{p}^-=0}.\]
    By Proposition~\ref{prop:automisom}, there exists $f \in \bigotimes_{s=1}^d M_{\tau_{n_s,k}(D)}(\Gamma_{n_s})$ 
    such that $\phi_f=\mathrm{Res}\left(\Phi_{h}(\pi_{n}^+)\cdot \phi_F\right)$.
    Since
    \[\phi_f(g^{(n_1)}_{Z_1},\ldots,g^{(n_d)}_{Z_d})=(\bigotimes_{s=1}^{d} \det(\sqrt{Y_s})^{\frac{k}{2}}(\rho_{n_s,D}(\sqrt{Y_s})))f(Z_1\ldots,Z_d),\]
    we have
    \[f(Z_1,\ldots,Z_d)=(\bigotimes_{s=1}^{d} \det(\sqrt{Y_s})^{-\frac{k}{2}}\rho_{n_s,D}^{-1}(\sqrt{Y_s}))\mathrm{Res}\left(\Phi_{h}(\pi_{n}^+)\cdot \phi_F\right)(g^{(n_1)}_{Z_1},\ldots,g^{(n_d)}_{Z_d}).\]
    Now we consider $\pi_{i,j}^+$'s action on $\phi_F$. Note that $\phi_F(g)=(\widetilde{\det^\frac{k}{2}}\cdot\widetilde{F})(g)$ using the notations above.

    Under the isomorphism $\mathbb{H}_n\times\widetilde{K_n}\cong\widetilde{G_n}$,
    we regard the function $\phi_F$ as the function in $Z=X+\sqrt{-1}Y\in \mathbb{H}_n$, $\sqrt{Y}$ and $k\in $ $\widetilde{K_n}$.
    We can easily check that $d\tau((e_{i,j}+e_{j,i})\cdot\overline{\tilde{\omega}(g)}^{-1}\cdot\tilde{\omega}(g))$ is 
    invariant under the left transition by $g_Z$ for any $Z\in \mathbb{H}_n$
    and this could be regarded as a function on $\widetilde{K_n}$.
    Since the right derivations by $\pi_{i,j}^+$ commutes with the left transition by $g_Z$,
    the derivated function of  $d\tau((e_{i,j}+e_{j,i})\cdot\overline{\tilde{\omega}(g)}^{-1}\cdot\tilde{\omega}(g))$
    could also be regarded as a function on $\widetilde{K_n}$.
  
    From the Lemma~\ref{lem:deri}, we can easily check that
    the highest degree part of $\Phi_{h}(\pi_{n}^+)\cdot \phi_F$ in $\sqrt{Y}$
    is $4^m\det(\sqrt{Y})^{\frac{k}{2}} \cdot \Phi_h(\sqrt{Y}\partial_Z\sqrt{Y})\cdot\phi_F$,
    where $m$ is a degree of $\Phi_F$.
  
    Since $h \in \left(\left(\bigotimes_{s=1}^{d} \mathfrak{H}_{n_s,k}(D)\right)^{\mathrm{O}_k}
    \otimes \left(\bigotimes_{s=1}^{d} U_{\tau'_{n_s,k}(D)}\right)\right)^{\widetilde{K'}}$,
    we have
    \[
      \mathrm{Res}(4^m\det(\sqrt{Y})^{\frac{k}{2}} \cdot \Phi_h(\sqrt{Y}\partial_Z\sqrt{Y})\cdot\phi_F)
      =4^m(\bigotimes_{s=1}^{d} \det(\sqrt{Y_s})^{\frac{k}{2}}(\rho_{n_s,D}(\sqrt{Y_s})))\mathrm{Res}( \Phi_h(\partial_Z)\cdot\phi_F).
    \]
    Thus we may denote $\mathrm{Res}\left(\Phi_{h}(\pi_{n}^+)\cdot \phi_F\right)$ by
    \begin{align*}
      \mathrm{Res}\left(\Phi_{h}(\pi_{n}^+)\cdot \phi_F\right)(g_1,\ldots,g_n)
      =4^m&(\bigotimes_{s=1}^{d} \det(\sqrt{Y_s})^{\frac{k}{2}}(\rho_{n_s,D}(\sqrt{Y_s})))\mathrm{Res}( \Phi_h(\partial_Z)\cdot\phi_F)\\
      &+(\bigotimes_{s=1}^{d} \det(\sqrt{Y_s})^{\frac{k}{2}})R,
    \end{align*}
    where $R=R(Z_1,\ldots,Z_d,\sqrt{Y_1},\ldots,\sqrt{Y_d},k_1,\ldots,k_d)$ is
    a polynomial with a degree strictly lower than
    that of $\bigotimes_{s=1}^{d} \rho_{n_s,D}(\sqrt{Y_s})$ in $(\sqrt{Y_1},\ldots,\sqrt{Y_d})$.
    Then we have,
    \[f=4^m\mathrm{Res}( \Phi_h(\partial_Z)\cdot\phi_F)+(\bigotimes_{s=1}^{d} \rho_{n_s,D}(\sqrt{Y_s})^{-1})R.\]
    On the other hand, since $f$ is a holomorphic function, $R=0$.
    Therefore, $\mathrm{Res}( \Phi_h(\partial_Z)\cdot\phi_F)=4^{-m}f$
    is an element of $\bigotimes_{s=1}^d M_{\tau_{n_s,k}(D)}(\Gamma_{n_s})$.
  \end{proof}
  
  \begin{prop}[Ibukiyama \cite{ibukiyama1999differential}]\label{cor:diff}
    If $d=2$ and $4k\geq n=n_1+n_2$,
    then there exists a differential operator satisfying condition (A)
    and it is unique up to a constant.
  \end{prop}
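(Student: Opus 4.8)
The plan is to translate Condition~(A) into the representation-theoretic language of Section~4, reduce the statement to the one-dimensionality of an explicit invariant space, and settle that by Schur's lemma together with the self-duality of orthogonal representations. First I would fix the dictionary between the two pictures. Since $F|_k$ has scalar weight $\det^k$ and the target weight is $\det^k\rho_{n_s,\mathbf{k}}$, comparison with Theorem~\ref{thm:mainweight} shows that the relevant Weil representation is $L_{n,2k}$ (so that $\tfrac{2k}{2}\mathbbm{1}_n=k\mathbbm{1}_n$ is the input weight and $\tau_{n_s,2k}(\mathbf{k})=\det^k\rho_{n_s,\mathbf{k}}$ is the output weight) and the Young diagram is $D=\mathbf{k}$. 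For $d=2$ set
\[\mathcal{D}=\left(\left(\mathfrak{H}_{n_1,2k}(\mathbf{k})\otimes\mathfrak{H}_{n_2,2k}(\mathbf{k})\right)^{\mathrm{O}_{2k}}\otimes\left(U^*_{\tau_{n_1,2k}(\mathbf{k})}\otimes U^*_{\tau_{n_2,2k}(\mathbf{k})}\right)\right)^{\widetilde{K'}}.\]
By the construction $h\mapsto\Phi_h(\partial_Z)$ and Theorem~\ref{thm:mainweight}, each $h\in\mathcal{D}$ produces a differential operator satisfying Condition~(A). Conversely, a differential operator satisfying Condition~(A) is determined by its constant-coefficient symbol in $\partial_Z$, and the equivariance in Condition~(A) forces this symbol to be pluriharmonic for $\mathrm{O}_{2k}$ and $\widetilde{K'}$-invariant, i.e. to lie in $\mathcal{D}$; thus $h\mapsto\Phi_h(\partial_Z)$ is a linear isomorphism and the proposition is equivalent to $\dim_{\mathbb{C}}\mathcal{D}=1$.

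Next I would compute $\dim\mathcal{D}$ directly. By the Kashiwara--Vergne decomposition of $\mathfrak{H}_{n,k}$ recalled in Section~4.2, we have $\mathfrak{H}_{n_s,2k}(\mathbf{k})\cong U_{\tau_{n_s,2k}(\mathbf{k})}\boxtimes V_{\lambda_{2k}(\mathbf{k})}$ as $\widetilde{K_{n_s}}\times\mathrm{O}_{2k}$-modules. Since the $\mathrm{O}_{2k}$ occurring in $\mathcal{D}$ is the diagonal one (it is the $\mathrm{O}_{2k}$ of $L_{n,2k}$, whose invariants are $\mathbb{C}[Z^{(n)}]$), taking diagonal $\mathrm{O}_{2k}$-invariants gives
\[\left(\mathfrak{H}_{n_1,2k}(\mathbf{k})\otimes\mathfrak{H}_{n_2,2k}(\mathbf{k})\right)^{\mathrm{O}_{2k}}\cong\left(U_{\tau_{n_1,2k}(\mathbf{k})}\otimes U_{\tau_{n_2,2k}(\mathbf{k})}\right)\otimes\left(V_{\lambda_{2k}(\mathbf{k})}\otimes V_{\lambda_{2k}(\mathbf{k})}\right)^{\mathrm{O}_{2k}}.\]
Every irreducible representation of the orthogonal group is self-dual, so $\left(V_{\lambda_{2k}(\mathbf{k})}\otimes V_{\lambda_{2k}(\mathbf{k})}\right)^{\mathrm{O}_{2k}}\cong\mathrm{Hom}_{\mathrm{O}_{2k}}(V_{\lambda_{2k}(\mathbf{k})},V_{\lambda_{2k}(\mathbf{k})})\cong\mathbb{C}$ by Schur's lemma. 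Substituting back,
\[\mathcal{D}\cong\mathrm{Hom}_{\widetilde{K'}}\left(U_{\tau_{n_1,2k}(\mathbf{k})}\otimes U_{\tau_{n_2,2k}(\mathbf{k})},\,U_{\tau_{n_1,2k}(\mathbf{k})}\otimes U_{\tau_{n_2,2k}(\mathbf{k})}\right),\]
which is again one-dimensional by Schur's lemma, because $U_{\tau_{n_1,2k}(\mathbf{k})}\boxtimes U_{\tau_{n_2,2k}(\mathbf{k})}$ is an irreducible $\widetilde{K'}$-module. Equivalently, one may route this through Lemma~\ref{lem:pluriharmonic} and the seesaw pair relating the branching of $L(k\mathbbm{1}_n)$ under $\mathrm{Sp}_{n_1}\times\mathrm{Sp}_{n_2}$ to the branching of $V_{\lambda_{2k}(\mathbf{k})}\boxtimes V_{\lambda_{2k}(\mathbf{k})}$ under the diagonal $\mathrm{O}_{2k}$; the core computation is the same self-duality identity.

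This already shows $\dim\mathcal{D}\le1$, so the operator is unique up to a constant whenever it exists, and existence amounts to $\mathcal{D}\neq0$, i.e. to $\mathfrak{H}_{n_s,2k}(\mathbf{k})\neq0$, i.e. to $\mathbf{k}\in\Delta_{n_s,2k}$ for $s=1,2$ by Theorem~\ref{thm:howe}. This is exactly where the hypothesis $4k\ge n$ enters: since $n_1\le n_2$ we have $l(\mathbf{k})\le n_1\le n/2\le 2k$, so the depth bound $l(\mathbf{k})\le\min\{n_s,2k\}$ needed for $\mathbf{k}\in\Delta_{n_s,2k}$ holds and the relevant lifts do not vanish. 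I expect the main obstacle to be the converse half of the first step --- verifying that \emph{every} operator satisfying Condition~(A) genuinely arises as some $\Phi_h(\partial_Z)$ with $h\in\mathcal{D}$, so that counting such operators really is computing $\dim\mathcal{D}$ --- together with the careful check that $4k\ge n$ is precisely what keeps $\mathbf{k}$ inside $\Delta_{n_s,2k}$. Once the identification with $\mathcal{D}$ is secured, both existence and uniqueness follow formally from the Kashiwara--Vergne decomposition, self-duality, and Schur's lemma.
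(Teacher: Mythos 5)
First, a point of comparison: the paper does not prove this proposition at all --- it is quoted from Ibukiyama \cite{ibukiyama1999differential} --- so your attempt must be judged against what such a proof needs to contain. Your dictionary is correct ($\det^k$ corresponds to the Weil representation $L_{n,2k}$, the diagram is $D=\mathbf{k}$, and $\tau_{n_s,2k}(\mathbf{k})=\det^k\rho_{n_s,\mathbf{k}}$), and your multiplicity-one count --- the Kashiwara--Vergne decomposition $\mathfrak{H}_{n_s,2k}(\mathbf{k})\cong U_{\tau_{n_s,2k}(\mathbf{k})}\boxtimes V_{\lambda_{2k}(\mathbf{k})}$, self-duality of irreducible $\mathrm{O}_{2k}$-modules, and Schur's lemma twice --- correctly yields $\dim\mathcal{D}\le 1$, with equality exactly when $\mathbf{k}\in\Delta_{n_1,2k}\cap\Delta_{n_2,2k}$. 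That is genuinely the representation-theoretic heart of uniqueness. But the step you defer is not deferrable: the claim that \emph{every} Condition-(A) operator arises as $\Phi_h(\partial_Z)$ with $h\in\mathcal{D}$ is precisely the content of Ibukiyama's theorem, and it is where the hypothesis comparing $k$ with $n$ does its real work. Theorem~\ref{thm:mainweight} supplies only the forward map (and even that is stated for automorphic forms under discrete groups, not the full $\mathrm{Sp}_{n_1}(\mathbb{R})\times\mathrm{Sp}_{n_2}(\mathbb{R})$-equivariance on arbitrary holomorphic functions demanded by Condition (A), though the $(\mathfrak{g},K)$-computation in the proof of Theorem~\ref{thm:maintype} can be upgraded to give it). For the converse, constancy of coefficients does not follow immediately (Condition (A) only gives translation-equivariance for block-diagonal $B$), and the passage from the symbol $P(T)$ to a pluriharmonic polynomial goes through the substitution $T\mapsto X\,{}^t\!X$ with $X\in M_{n,2k}$, which is injective only when $2k\ge n$ since $X\,{}^t\!X$ has rank at most $2k$. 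Without this step your Schur count bounds $\dim\mathcal{D}$, not the dimension of the space of operators, so uniqueness is not established.

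There is also a concrete error in your existence step: you verify only the depth bound in the paper's Definition of $\Delta_{n,k}$ and silently drop its second clause, namely that $\mathbf{k}\in\Delta_{n_s,2k}$ additionally requires $\mu_j=1$ for $2k-l(\mathbf{k})<j\le l(\mathbf{k})$ whenever $l(\mathbf{k})>k$. The hypothesis $4k\ge n$ gives $l(\mathbf{k})\le n_1\le n/2\le 2k$ but does not exclude $k<l(\mathbf{k})\le 2k$. For instance $k=2$, $n_1=n_2=4$, $\mathbf{k}=(2,2,2)$ satisfies $4k\ge n=8$ and $l(\mathbf{k})\le n_1$, yet $l(\mathbf{k})=3>k=2$ and $\mu_2=2\neq1$, so $\mathbf{k}\notin\Delta_{4,4}$ and by your own criterion $\mathcal{D}=0$: your argument then produces no operator, contradicting the statement you set out to prove. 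In other words, your proof works as sketched only in the stable range $2k\ge n$, where $l(\mathbf{k})\le n/2\le k$ makes both this issue and the injectivity issue above disappear; covering the full stated range $4k\ge n$ requires Ibukiyama's finer analysis of the operator-to-pluriharmonic correspondence outside the stable range, which cannot be read off from Section 4's formalism alone.
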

  
  A more detailed algorithm for the construction of the differential operators is examined by Ibukiyama \cite{Ibukiyama2020generic}.
  
  \subsection{Pullback formula}
  Let $k$, $\nu$, $n_1$, and $n_2$ be positive integers such that $k, \nu$ are even.
  We take a differential operator $\mathbb{D}_\mathbf{k}=\mathbb{D}_{\mathbf{k},n_1,n_2}$ on $\mathbb{H}_{n}$ satisfying Condition (A)
  for $k$ and $\det^k \rho_{n_1,\mathbf{k}}\otimes \det^k \rho_{n_2,\mathbf{k}}$.
  For a fixed dominant integral weight $\mathbf{k}$ and an integer $r$ such that $l(\mathbf{k}) \leq r$,
  we define $\rho_r=\det^k \rho_{r,\mathbf{k}}$.
  For a Hecke eigenform $f \in S_{\rho_r}(\Gamma_r)$, we define $D(s,f)$ by
  \[D(s,f)=\zeta(s)^{-1}\prod_{i=1}^r\zeta(2s-2i)^{-1}L(s-r,f,\mathrm{St}).\]
  Let $\{f_{r, 1}, \ldots,f_{r, d_r}\}$ 
  be a orthogonal basis of $S_{\rho_r}(\Gamma_r)$ consisting of Hecke eigenforms for a positive integer $r$.
  By Taylor \cite[Lemma 2.1]{Taylor1988congruences},
  we can assume that $f_{r, 1}, \ldots,f_{r, d_r}$  are elements in $S_{\rho_r}(\Gamma_r)(\mathbb{Z}).$
  
  The following theorem is well known as the pullback formula.
  
  \begin{thm}[pullback formula]\label{thm:eisen} Let $k$ be a integer such that $k\geq{n_1+n_2+1}$.
    Then we have
    \[\mathbb{D}_{\mathbf{k},n_1,n_2}E_{n_1+n_2,k}\begin{pmatrix}Z& O\\ O & W\\ \end{pmatrix} =
    \sum_{r=1}^{\min\{n_1, n_2\}}c_{r,\mathbf{k}}\sum_{j=1}^{d_r}\frac{D(k,f_{r, j})}{(f_{r,j},f_{r,j})}
    \left[f_{r,j} \right]^{\rho_{n_1}}(Z)\left[ f_{r,j} \right]^{\rho_{n_2}}(W).\]
    for some constant $c_{r,\mathbf{k}}$.
  \end{thm}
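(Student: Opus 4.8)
The plan is to combine the automorphy already granted by Condition (A) with the doubling (unfolding) method, which identifies the cuspidal components of a pulled-back Siegel--Eisenstein series with standard $L$-values.

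First I would observe that, applying Condition (A) to $F=E_{n_1+n_2,k}$ (equivalently, invoking Theorem~\ref{thm:mainweight} in the case $d=2$), the function
\[ G(Z,W) := \mathbb{D}_{\mathbf{k},n_1,n_2}E_{n_1+n_2,k}\begin{pmatrix} Z & O\\ O & W\\ \end{pmatrix}, \]
which is the restriction $\mathrm{Res}$ of $\mathbb{D}_{\mathbf{k},n_1,n_2}E_{n_1+n_2,k}$ to $\mathbb{H}_{n_1}\times\mathbb{H}_{n_2}$, is a holomorphic modular form of weight $\rho_{n_1}=\det^k\rho_{n_1,\mathbf{k}}$ in $Z$ and of weight $\rho_{n_2}=\det^k\rho_{n_2,\mathbf{k}}$ in $W$; the hypothesis $k\geq n_1+n_2+1$ ensures the absolute convergence of $E_{n_1+n_2,k}$ and the holomorphy of the outcome. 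Since the Klingen--Eisenstein lifts $[f_{r,j}]^{\rho_{n_i}}$ of the cuspidal Hecke eigenforms $f_{r,j}\in S_{\rho_r}(\Gamma_r)$ with $0\leq r\leq n_i$ span $M_{\rho_{n_i}}(\Gamma_{n_i})$ (the case $r=n_i$ recovering the cusp forms themselves), and a genus-$r$ cusp form can be lifted to genus $n_i$ only when $r\leq n_i$, expanding $G$ in $Z$ and in $W$ confines the effective strata to $1\leq r\leq\min\{n_1,n_2\}$, in accordance with the constraint $l(D)\leq\min\{n_1,n_2\}$.

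The heart of the argument is then the coefficient extraction. For fixed $W$ I would apply the Siegel operator $\Phi$ repeatedly to descend $G(\cdot,W)$ to genus $r$, and pair the result against $f_{r,j}$ with the Petersson inner product. Because $\mathbb{D}_{\mathbf{k},n_1,n_2}E_{n_1+n_2,k}$ is built from a Siegel--Eisenstein series, this pairing unfolds (the doubling/seesaw identity) into an Eulerian integral: the arithmetic part is exactly $L(k-r,f_{r,j},\mathrm{St})$, the normalization of $E_{n_1+n_2,k}$ supplies the factors $\zeta(k)^{-1}\prod_{i=1}^{r}\zeta(2k-2i)^{-1}$, and together these reconstitute $D(k,f_{r,j})$, while the Petersson normalization produces the denominator $(f_{r,j},f_{r,j})$. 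The archimedean integral contributed by the differential operator $\mathbb{D}_{\mathbf{k},n_1,n_2}$ and the weight $\mathbf{k}$ is independent of the eigenform, giving the constant $c_{r,\mathbf{k}}$. Crucially, the same unfolding shows that the $f_{r,j}$-isotypic component, viewed as a function of $W$, is forced to be a scalar multiple of $[f_{r,j}]^{\rho_{n_2}}(W)$, i.e. the Klingen lift of the \emph{same} cusp form; this is what eliminates all cross terms $[f_{r,j}]^{\rho_{n_1}}(Z)[f_{r',j'}]^{\rho_{n_2}}(W)$ with $(r',j')\neq(r,j)$ and yields the diagonal shape of the asserted identity.

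The main obstacle I anticipate is the explicit evaluation of the archimedean factor: tracking how $\mathbb{D}_{\mathbf{k},n_1,n_2}$ interacts with the vector-valued weight $\rho_{\mathbf{k}}$ through the unfolding, so as to confirm that the eigenform-independent constant is precisely $c_{r,\mathbf{k}}$ and that the Euler product collapses to exactly $D(k,f_{r,j})$ rather than to an off-normalization of it. Establishing the Hecke-equivariance of $\mathbb{D}_{\mathbf{k},n_1,n_2}$ needed to rule out cross terms cleanly, and verifying that the descent-and-pairing step is valid in the stated range $k\geq n_1+n_2+1$ (so that all integrals converge and the genus-zero stratum contributes only to the non-cuspidal part already excluded), are the remaining technical points.
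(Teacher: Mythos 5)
The paper offers no proof of Theorem~\ref{thm:eisen}: it is quoted as the well-known pullback formula, with the argument deferred to the cited literature \cite{bocherer1985uber,garrett1984pullbacks,kozima2008garrett} and the explicit constants $c_{r,\mathbf{k}}$ taken from Ibukiyama \cite{Ibukiyama2022differantial}. Your sketch reproduces essentially the standard route of those references --- bi-modularity of the restriction via Condition (A), expansion along Klingen strata, and $\Phi$-descent plus Petersson pairing unfolding to $D(k,f_{r,j})/(f_{r,j},f_{r,j})$ times the Klingen lift of the same eigenform --- so it matches the approach the paper relies on; the only correction worth making is that the diagonal shape of the expansion falls out of the unfolding computation itself (pairing against an eigenform returns a multiple of the lift of that same eigenform), so no separate Hecke-equivariance property of $\mathbb{D}_{\mathbf{k},n_1,n_2}$ is needed to kill cross terms.
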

  This constant $c_{r,\mathbf{k}}$ is specifically given by Ibukiyama \cite{Ibukiyama2022differantial}.
  The weight $k$ can be made a smaller value, but the value of the constants $c_r$ at that time has not been specifically calculated.
  For more details on the pullback formula, see for example \cite{bocherer1985uber, garrett1984pullbacks, Ibukiyama2022differantial, kozima2008garrett}.

  \subsection{The case of $l(\mathbf{k})=1$}
   the case $\mathbf{k}=(\nu,0,0,\ldots)$ (i.e. $l(\mathbf{k})=1$) has been investigated by 
  B\"{o}cherer-Satoh-Yamazaki \cite{Bocherer1992pullback}, Ibukiyama \cite{Ibukiyama2022differantial} and others.
  We describe their results for later sections.
  
  \begin{dfn}\label{def:gegen}
    We define the polynomial $P_{d,\nu}(s,m)$ by 
    \[\frac{1}{(1-2st+mt^2)^{(d-2)/2}}=\sum_{\nu=0}^{\infty} P_{d,\nu}(s,m)t^\nu.\]
    These polynomials are called Gegenbauer polynomials.
  
  \end{dfn}
  
  The Gegenbauer polynomial $P_{d,\nu}(s,m)$ can be written concretely as follow:
  \[P_{d,\nu}(s,m)=\sum_{\mu=0}^{\left[\nu/2\right]}(-1)^\mu\frac{(d/2-1)_{\nu-\mu}}{(\nu-2\mu)!\mu!}(2s)^{\nu-2\mu}m^\mu,\]
  where $(x)_\mu=x(x+1)\cdots(x+\mu-1)$ and $\left[\ \cdot \ \right]$ is the Gauss symbol.
  
  For $Z \in \mathbb{H}_n$, we write $Z=\begin{pmatrix}Z_1& Z_{12}\\ {}^tZ_{12} & Z_2\\ \end{pmatrix}$
  ($Z_1\in \mathbb{H}_{n_1}$, $Z_2\in \mathbb{H}_{n_2}$, $Z_{12}\in M_{n_1,n_2}(\mathbb{C})$)
  and $Z_m=(z_{i,j}^{(m)})$, $Z_{12}=(z_{i,j}^{(12)})$.
  We define the $n_m\times n_m$ matrix of partial differential operator $\dfrac{\partial}{\partial Z_m}$ by
  \[\frac{\partial}{\partial Z_m}=\left(\frac{1+\delta_{i,j}}{2}\frac{\partial}{\partial z_{i,j}^{(m)}}\right)\]
  and the $n_1\times n_2$ matrix of partial differential operator $\dfrac{\partial}{\partial Z_{12}}$ by
  \[\frac{\partial}{\partial Z_{12}}=\left(\frac{1}{2}\frac{\partial}{\partial z_{i,j}^{(12)}}\right).\]
  
  \begin{prop}[\cite{Bocherer1992pullback}, \cite{Ibukiyama2022differantial}]
    Let $\mathbf{k}=(\nu,0,0,\ldots)$ be a dominant integral weight and $k$ be a non-negative integer.
    We take $u=(u_1,\ldots,u_{n_1})$ and $v=(v_1,\ldots,v_{n_2})$ to be the variables
    in the representation spaces of $\rho_{n_1,\mathbf{k}}$ and $\rho_{n_2,\mathbf{k}}$
    as the symmetric tensor representations.
    Then, 
    \[\mathbb{D}_{k,\nu}=P_{2k,\nu}\left(u\frac{\partial}{\partial Z_{12}} {}^tv,
    \left(u\frac{\partial}{\partial Z_1} {}^tu\right)\left(v\frac{\partial}{\partial Z_2} {}^tv\right)\right)\]
    satisfies Condition (A) for $\det^k$ and $\det^k\rho_{n_1,\mathbf{k}}\otimes\det^k\rho_{n_2,\mathbf{k}}$.
  
  \end{prop}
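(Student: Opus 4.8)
The plan is to verify Condition (A) directly, after reducing to generators. Observe first that Condition (A) is multiplicative in the pair: writing $P(g_1,g_2)$ for the assertion that $\mathrm{Res}(\mathbb{D}_{k,\nu}(F|_k[(g_1,g_2)]))=(\mathrm{Res}\,\mathbb{D}_{k,\nu}(F))|_{\det^k\mathrm{Sym}^\nu\otimes\det^k\mathrm{Sym}^\nu}[g_1,g_2]$ for every holomorphic $F$, one checks from the cocycle property of the two slash actions that $P(g_1,g_2)$ and $P(g_1',g_2')$ imply $P(g_1g_1',g_2g_2')$. Since $\mathrm{Sp}_{n_1}(\mathbb{R})\times\mathrm{Sp}_{n_2}(\mathbb{R})$ is generated by the unipotent translations $\begin{pmatrix}I&B\\0&I\end{pmatrix}$, the Levi elements $\begin{pmatrix}A&0\\0&{}^tA^{-1}\end{pmatrix}$ in each factor, and the involutions $J_{n_1},J_{n_2}$, it suffices to verify Condition (A) on these three families.

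For the first two families the check is local and routine. A translation fixes the automorphy factor and acts by $Z\mapsto Z+\mathrm{diag}(B,0)$; as $R=u\tfrac{\partial}{\partial Z_{12}}{}^tv$, $S=u\tfrac{\partial}{\partial Z_1}{}^tu$ and $T=v\tfrac{\partial}{\partial Z_2}{}^tv$ have constant coefficients, $\mathbb{D}_{k,\nu}$ commutes with the translation and Condition (A) is immediate. For a Levi element in the first factor one has $Z_1\mapsto A Z_1{}^tA$, $Z_{12}\mapsto A Z_{12}$ with factor $\det(A)^k$, so by the chain rule $\tfrac{\partial}{\partial Z_1}\mapsto{}^tA^{-1}\tfrac{\partial}{\partial Z_1}A^{-1}$ and $\tfrac{\partial}{\partial Z_{12}}\mapsto{}^tA^{-1}\tfrac{\partial}{\partial Z_{12}}$, while the symmetric-tensor variables transform so that the contractions $R,S,T$ are $\mathrm{GL}$-equivariant. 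All factors of $A$ then cancel against $\rho_{n_1,\mathbf{k}}(A)=\mathrm{Sym}^\nu(A)$, giving Condition (A) for the Levi (and symmetrically for the second factor).

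The crux is the involution $J_{n_1}$ (the case $J_{n_2}$ being symmetric). A block computation shows that $J_{n_1}\times\mathrm{id}$ sends $Z$ to the matrix $W$ with blocks
\[W_1=-Z_1^{-1},\quad W_{12}=-Z_1^{-1}Z_{12},\quad W_2=Z_2-{}^tZ_{12}Z_1^{-1}Z_{12},\]
and has automorphy factor $\det(-Z_1)^{-k}$. One then applies $\mathbb{D}_{k,\nu}=P_{2k,\nu}(R,ST)$ to $\det(-Z_1)^{-k}F(W)$ and restricts to $Z_{12}=0$. Expanding by the chain rule, the cross-derivative $R$ has a leading piece in which $\tfrac{\partial}{\partial Z_{12}}$ hits $W_{12}=-Z_1^{-1}Z_{12}$, producing the covariant Jacobian $-Z_1^{-1}$; but $R$ also feels the quadratic $Z_{12}$-dependence of $W_2$, and iterating $R$ together with the factor $ST$ and the repeated differentiation of $\det(-Z_1)^{-k}$ generates correction terms. \emph{The main obstacle is to reorganize this expansion so that, after summation against the coefficients $(-1)^\mu(k-1)_{\nu-\mu}/((\nu-2\mu)!\,\mu!)$ of $P_{2k,\nu}$, the non-covariant corrections cancel and only $\det(-Z_1)^{-k}$ times the covariantly transformed operator survives.} The generating-function identity defining $P_{2k,\nu}$ (Definition~\ref{def:gegen}) and its three-term recurrence are exactly what force this collapse; this is the computational heart of the proof.

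Finally, the Howe-duality framework of Section~4 both explains the appearance of $P_{2k,\nu}$ and offers an alternative to the inversion computation. In the normalization of Theorem~\ref{thm:mainweight} the scalar weight $\det^k=\tfrac{2k}{2}\mathbbm{1}_n$ pairs with the orthogonal group $\mathrm{O}_{2k}$, and the single-row diagram $D=(\nu)$, which satisfies $l(D)=1\le\min\{n_1,n_2\}$, gives $\tau_{n_s,2k}(D)=(\nu+k,k,\ldots,k)=\det^k\mathrm{Sym}^\nu$, matching the target weight and explaining why $d=2k$ enters the Gegenbauer polynomial. By Theorem~\ref{thm:howe} the space $\big(\mathfrak{H}_{n_1,2k}(D)\otimes\mathfrak{H}_{n_2,2k}(D)\big)^{\mathrm{O}_{2k}}$ is one-dimensional, spanned by the zonal $\mathrm{O}_{2k}$-harmonic pairing the two copies of $V_{\lambda_{2k}(D)}$, whose explicit polynomial realization is precisely $P_{2k,\nu}$. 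Hence Theorem~\ref{thm:mainweight} applied to the corresponding $h$ yields an operator $\Phi_h(\partial_Z)$ satisfying Condition (A), which by this identification equals a constant multiple of $\mathbb{D}_{k,\nu}$; when $4k\ge n$ this is consistent with the uniqueness of Proposition~\ref{cor:diff}. This route replaces the delicate inversion calculation by the cleaner identification of the pluriharmonic $\mathrm{O}_{2k}$-invariant with the Gegenbauer polynomial.
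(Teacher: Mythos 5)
Your reduction framework is fine as far as it goes: Condition (A) is indeed multiplicative in $(g_1,g_2)$ because it is asserted for \emph{all} holomorphic $F$ and both slash operations satisfy the cocycle property; the generation of $\mathrm{Sp}_{n_1}(\mathbb{R})\times\mathrm{Sp}_{n_2}(\mathbb{R})$ by translations, Levi elements and $J_{n_1},J_{n_2}$ is standard; the translation and Levi checks are genuinely routine and your transformation rules for $\partial/\partial Z_1$, $\partial/\partial Z_{12}$ are correct; and your block computation of the image point ($W_1=-Z_1^{-1}$, $W_{12}=-Z_1^{-1}Z_{12}$, $W_2=Z_2-{}^t\!Z_{12}Z_1^{-1}Z_{12}$, factor $\det(-Z_1)^{-k}$) is right. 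But at the only nontrivial generator the proof stops. The sentence asserting that, after summing against the Gegenbauer coefficients, ``the non-covariant corrections cancel'' \emph{is} the proposition: nothing in your write-up actually brings the generating identity of Definition~\ref{def:gegen} or the three-term recurrence to bear on the chain-rule expansion of $P_{2k,\nu}(R,ST)$ applied to $\det(-Z_1)^{-k}F(W)$, and the way the parameter $d=2k$ in $P_{d,\nu}$ interacts with the repeated differentiation of $\det(-Z_1)^{-k}$ is precisely the delicate point (it is where the choice of $P_{2k,\nu}$ rather than any other degree-$\nu$ polynomial in $R$ and $ST$ is used). Note also that the paper itself offers no proof of this proposition — it is quoted from \cite{Bocherer1992pullback} and \cite{Ibukiyama2022differantial}, where this computation (or an equivalent harmonicity argument) is the substance of the result. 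As it stands, your text is a correct plan with the heart of the argument asserted rather than proved.

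The Howe-duality alternative does not close the gap; it relocates it. To invoke the machinery of Section~4 you must know that the essentially unique invariant $h$ for $D=(\nu)$ has $\Phi_h$ equal, up to a scalar, to $P_{2k,\nu}\bigl(uX^{(1)}\,{}^t\!X^{(2)}\,{}^t\!v,\ (uX^{(1)}\,{}^t\!X^{(1)}\,{}^t\!u)(vX^{(2)}\,{}^t\!X^{(2)}\,{}^t\!v)\bigr)$ — equivalently, that this explicit polynomial is pluriharmonic in $X^{(1)}$ and in $X^{(2)}$ — and verifying that is the same Gegenbauer computation you skipped in the first route; you assert the identification (``whose explicit polynomial realization is precisely $P_{2k,\nu}$'') without proof. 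Two further inaccuracies: $\bigl(\mathfrak{H}_{n_1,2k}(D)\otimes\mathfrak{H}_{n_2,2k}(D)\bigr)^{\mathrm{O}_{2k}}$ is not one-dimensional — it is isomorphic to $U_{\tau_{n_1,2k}(D)}\otimes U_{\tau_{n_2,2k}(D)}$ as a $\widetilde{K'}$-module, and one-dimensionality (multiplicity one) holds only after tensoring with $\bigotimes_s U^*_{\tau_{n_s,2k}(D)}$ and taking $\widetilde{K'}$-invariants, as in Lemma~\ref{lem:pluriharmonic}. And Theorem~\ref{thm:mainweight} as stated maps automorphic forms for the discrete group to automorphic forms; Condition (A) demands equivariance under all of $\mathrm{Sp}_{n_1}(\mathbb{R})\times\mathrm{Sp}_{n_2}(\mathbb{R})$ acting on arbitrary holomorphic functions. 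The underlying $(\mathfrak{g}'_{\mathbb{C}},\widetilde{K'})$-argument of Theorem~\ref{thm:maintype} does extend to give this, but that extension must be stated and argued, not obtained by citing the theorem. So either route is completable — and the second is conceptually the cleaner one, consistent with the uniqueness in Proposition~\ref{cor:diff} when $4k\geq n$ — but in both versions the decisive step, identifying the Gegenbauer polynomial as the harmonic covariant kernel, is assumed rather than proved.
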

  
  We put $c_{k,\nu,r}=c_{r, \mathbf{k}}$.
  By using exact pullback formula given by Ibukiyama \cite{Ibukiyama2022differantial},
  we have
  \begin{align*}
    c_{k,\nu, 1}&=2^4(k)_\nu\ (2k-1)_{\nu-1}\cdot\pi,\\
    c_{k,\nu,2}&=(-1)^{\frac{k}{2}}2^8\frac{(k)_\nu\ (2k-1)_{\nu-2}}{k-2}\cdot \pi^3.
  \end{align*}
  
  \section{Kurokawa-Mizumoto congruence}
  
  Let $k$, $\nu$, $n_1$, and $n_2$ be positive integers such that $k, \nu$ are even and $1\leq n_1\leq n_2\leq2$.
  For a dominant integral weight $\mathbf{k}=(\nu,0,0,\ldots)$,
  we define $\rho_n=\det^k \rho_{n,\mathbf{k}}$ and  $c_{k,\nu, r}$ as in the previous chapters.
  Let $\{f_{r, 1}, \ldots,f_{r, d_r}\}$ 
  be a orthogonal basis of $S_{\rho_r}(\Gamma_r)$ consisting of Hecke eigenforms in $S_{\rho_r}(\Gamma_r)(\mathbb{Z})$ for a positive integer $r$.
  
  Now we define the functions as follows:
  \begin{eqnarray*}
    Z(n,k)&=&\zeta(1-k)\prod_{j=1}^{\left[n/2\right]}\zeta(1+2j-2k),\\
    \widetilde{E_{n,k}}(Z)&=&Z(n,k)E_{n,k}(Z),\\
    \mathcal{E}(Z_1,Z_2)&=&\mathcal{E}_{k,\nu,n_1,n_2}(Z_1,Z_2)
    \coloneqq  \nu!(2\pi\sqrt{-1})^{-\nu}\mathbb{D}_{\nu,n_1,n_2}\widetilde{E_{n_1+n_2,k}}\begin{pmatrix}Z_1& O\\ O & Z_2\\ \end{pmatrix}.
  \end{eqnarray*}
  For $f\in S_{k}(\Gamma_1)$, we put 
  \begin{eqnarray*}
    \mathbb{L}(s,f,\mathrm{St})&=&\Gamma_{\mathbb{C}}(s)\Gamma_\mathbb{C}(s+k-1)\frac{L(s,f,\mathrm{St})}{(F,F)},\\
    \mathcal{C}_{m,k}(f)&=&\frac{Z(m,k)}{Z(2,k)}\mathbb{L}(k-1,f,\mathrm{St}).
  \end{eqnarray*}
  For $F\in S_{(k+\nu,k)}(\Gamma_2)$, we put
  \begin{eqnarray*}
    \mathbb{L}(s,F,\mathrm{St})&=&\Gamma_{\mathbb{C}}(s)\Gamma_\mathbb{C}(s+k+\nu-1)\Gamma_\mathbb{C}(s+k-2)\frac{L(s,F,\mathrm{St})}{(F,F)},\\
    \mathcal{C}_{m,k}(F)&=&\frac{Z(m,k)}{Z(4,k)}\mathbb{L}(k-2,F,\mathrm{St}).
  \end{eqnarray*}
  
  \begin{prop}[Kozima \cite{kozima2000on}]\label{prop:sym-algebricity}
    Let $k,\nu\in2\mathbb{Z}_{\geq0}$ and $k\geq2n+2$.
    Let $f\in S_{\det^k\otimes\mathrm{Sym}^\nu}(\Gamma_n)(\mathbb{Q}(f))$ be an Hecke eigenform.
    Let $m\in\mathbb{Z}$ be such that
    $1\leq m \leq k-n \text{ and } m\equiv n \mod 2$. 
    We assume that $n\equiv 3 \mod 4$ if $m=1$.
    Then we have
    \[\frac{L(m,f,\mathrm{St})}{\pi^{nk+l+m(n+1)-n(n+1)/2}(f,f)}\in \mathbb{Q}(f).\]
  \end{prop}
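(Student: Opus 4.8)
The plan is to prove this by the doubling (pullback) method combined with Shimura's arithmeticity theory for nearly holomorphic modular forms, following Kozima \cite{kozima2000on} in the spirit of Shimura and Sturm. First I would produce an integral representation of the special value. Applying a differential operator $\mathbb{D}$ of the type studied in Section~4 to the degree-$2n$ Siegel--Eisenstein series $E_{2n,k}(\,\cdot\,,s)$ and restricting to $\mathbb{H}_n\times\mathbb{H}_n$ yields, by the $s$-dependent doubling integral underlying the pullback formula (Theorem~\ref{thm:eisen}), a sum over a Hecke basis of $S_{\det^k\otimes\mathrm{Sym}^\nu}(\Gamma_n)$ in which the standard $L$-value $L(s-n,\,\cdot\,,\mathrm{St})$ occurs as the coefficient. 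Pairing against $f$ in the second variable isolates its component, so that $L(m,f,\mathrm{St})/(f,f)$ is expressed, up to explicit Gamma- and zeta-factors, as a Petersson product of $f$ with $\mathrm{Res}\bigl(\mathbb{D}\,E_{2n,k}(\,\cdot\,,s_0)\bigr)$, where $s_0$ is the point attached to the critical integer $m$.

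The second step feeds in the arithmeticity of the Eisenstein series. At the critical point $s_0$, Shimura's theory shows that $E_{2n,k}(\,\cdot\,,s_0)$ is a nearly holomorphic modular form whose Fourier coefficients lie in $\pi^{a}\mathbb{Q}$ for an explicit $a$, once one divides by the product $Z(2n,k)$ of zeta values (itself a rational multiple of a power of $\pi$); this is the normalization $\widetilde{E_{2n,k}}$ of Section~5. The operator $\mathbb{D}$ has coefficients that are polynomials in the entries of $\partial_Z$ with coefficients in $\mathbb{Q}$: for $l(\mathbf{k})=1$ this is visible from the Gegenbauer-polynomial formula for $P_{2k,\nu}$, and in general it follows from the representation-theoretic construction of Section~4, where $\Phi_h(\partial_Z)$ is built from pluriharmonic polynomials that can be chosen over $\mathbb{Q}$. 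Hence $\mathrm{Res}\bigl(\mathbb{D}\,\widetilde{E_{2n,k}}(\,\cdot\,,s_0)\bigr)$ is again nearly holomorphic with Fourier coefficients in $\pi^{a'}\mathbb{Q}$, the power of $\pi$ shifting by the degree of $\mathbb{D}$.

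The third step applies the holomorphic projection operator, which by Shimura preserves the rationality field of the Fourier coefficients up to a further controlled power of $\pi$, producing a holomorphic cusp form $g\in S_{\det^k\otimes\mathrm{Sym}^\nu}(\Gamma_n)$ with coefficients in $\pi^{b}\overline{\mathbb{Q}}$. Since the projector onto the $f$-isotypic component is Hecke-equivariant, hence defined over $\mathbb{Q}(f)$, and $f$ lies in $S_{\det^k\otimes\mathrm{Sym}^\nu}(\Gamma_n)(\mathbb{Q}(f))$, the ratio $(f,g)/(f,f)$ lies in $\pi^{b}\mathbb{Q}(f)$. Collecting the powers of $\pi$ contributed by the normalized Eisenstein series, the factors $\Gamma_\mathbb{C}$, the degree of $\mathbb{D}$, and the Petersson normalization then gives exactly the exponent $nk+l+m(n+1)-n(n+1)/2$, and the remaining factor lies in $\mathbb{Q}(f)$, as claimed.

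The hard part, as always in these arithmeticity arguments, is the precise bookkeeping of the power of $\pi$ together with the descent of the rationality field from $\overline{\mathbb{Q}}$ down to $\mathbb{Q}(f)$; the latter requires Galois-equivariance of every ingredient (the Eisenstein series, the differential operator, the holomorphic projection, and the Hecke projector). A secondary subtlety is the boundary value $m=1$, where $s_0$ sits at the edge of the critical strip and the Eisenstein series can fail to be holomorphic or the doubling integral can degenerate; the extra hypothesis $n\equiv 3 \bmod 4$ is precisely what keeps the relevant Eisenstein series a well-defined nearly holomorphic form there, so that the argument still goes through.
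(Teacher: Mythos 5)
The paper does not actually prove this proposition---it is quoted verbatim (up to notation) from Kozima \cite{kozima2000on} and used as a black box to deduce the corollary on $\mathcal{C}_{4,k}$---so there is no internal proof to compare against; your sketch instead reconstructs the method of the cited reference itself: the doubling/pullback integral representation with a holomorphic differential operator applied to the degree-$2n$ Siegel--Eisenstein series, Shimura's arithmeticity for nearly holomorphic Eisenstein series, holomorphic projection, and Hecke-equivariant descent of the rationality field to $\mathbb{Q}(f)$. As an outline this is sound and is essentially Kozima's actual argument (in the B\"ocherer--Mizumoto--Sturm tradition), including your reading of the $m=1$, $n\equiv 3 \bmod 4$ hypothesis as excluding the degenerate edge point of the critical strip where near-holomorphy of the Eisenstein series can fail.
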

  
    By Proposition~\ref{prop:scalar-algebricity} and Proposition~\ref{prop:sym-algebricity},
    we obtain the following corollary.
  
    \begin{cor}
      \begin{enumerate}
        \item For a Hecke eigenform $f\in S_k(\Gamma_1)(\mathbb{Q}(f))$, we have
          $\mathcal{C}_{4,k}(f)\in \mathbb{Q}(f)$.
        \item For a Hecke eigenform $F\in S_{(k+\nu,k)}(\Gamma_2)(\mathbb{Q}(f))$ with $k\geq 6$, we have
        $\mathcal{C}_{4,k}(F)\in \mathbb{Q}(F)$.
      \end{enumerate}
      
    \end{cor}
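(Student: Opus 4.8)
The plan is to handle the two parts separately, each time reducing the asserted algebraicity to one of the two results recalled above—Proposition~\ref{prop:scalar-algebricity} in the scalar case and Proposition~\ref{prop:sym-algebricity} in the vector-valued case—combined with elementary bookkeeping of the zeta-value prefactor and of the archimedean $\Gamma_{\mathbb{C}}$-factors.

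For part (1), I would first simplify the quotient of zeta products. Since $Z(2,k)=\zeta(1-k)\zeta(3-2k)$ and $Z(4,k)=\zeta(1-k)\zeta(3-2k)\zeta(5-2k)$, the prefactor collapses to $Z(4,k)/Z(2,k)=\zeta(5-2k)$. As $k$ is even, $5-2k$ is a negative odd integer, so $\zeta(5-2k)$ is a nonzero rational number by Euler's evaluation of $\zeta$ at negative odd integers. It then suffices to note that the $\mathbb{L}$-factor defined in this section for $f\in S_k(\Gamma_1)$ is exactly the one of Proposition~\ref{prop:scalar-algebricity} specialized to $n=1$. Applying that proposition with $n=1$ (so $n_0=1$) and $m=k-1$—which is odd, hence $\equiv 1\bmod 2$, and lies in the admissible range $1\le m\le k-1$—gives $\mathbb{L}(k-1,f,\mathrm{St})\in\mathbb{Q}(f)$, whence $\mathcal{C}_{4,k}(f)=\zeta(5-2k)\,\mathbb{L}(k-1,f,\mathrm{St})\in\mathbb{Q}(f)$.

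For part (2) the prefactor is trivial: $\mathcal{C}_{4,k}(F)=\frac{Z(4,k)}{Z(4,k)}\mathbb{L}(k-2,F,\mathrm{St})=\mathbb{L}(k-2,F,\mathrm{St})$, so I only have to show the near-central value $\mathbb{L}(k-2,F,\mathrm{St})$ lies in $\mathbb{Q}(F)$. Writing $\Gamma_{\mathbb{C}}(s)=2(2\pi)^{-s}\Gamma(s)$ and evaluating the three factors at $s=k-2$, namely $\Gamma_{\mathbb{C}}(k-2)$, $\Gamma_{\mathbb{C}}(2k+\nu-3)$, $\Gamma_{\mathbb{C}}(2k-4)$—each a positive integer argument since $k,\nu$ are even and $k\geq 6$—one finds their product lies in $\mathbb{Q}\cdot\pi^{-(5k+\nu-9)}$. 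On the other side I would invoke Proposition~\ref{prop:sym-algebricity} with $n=2$ (legitimate as $k\geq 6=2n+2$) and $m=k-2$, which is even, hence $\equiv n\bmod 2$, and satisfies $1\le m\le k-2=k-n$; this gives $L(k-2,F,\mathrm{St})/\bigl(\pi^{5k+\nu-9}(F,F)\bigr)\in\mathbb{Q}(F)$, since the exponent $nk+l+m(n+1)-n(n+1)/2$ with $n=2$, $l=\nu$, $m=k-2$ evaluates to $5k+\nu-9$. Combining the two contributions, the powers of $\pi$ cancel and $\mathbb{L}(k-2,F,\mathrm{St})\in\mathbb{Q}(F)$.

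The only genuinely delicate point is the exponent matching in part (2): one must check that the power of $\pi$ produced by the three $\Gamma_{\mathbb{C}}$-factors coincides with the power of $\pi$ in Kozima's normalization (Proposition~\ref{prop:sym-algebricity}), which in particular requires identifying the parameter $l$ of that statement with $\nu$ for the weight $\det^k\otimes\mathrm{Sym}^\nu$. Once this accounting is confirmed—together with the routine verifications that $m$ meets the parity and range constraints and that $k\geq 2n+2$—both assertions follow at once, so I expect no substantive obstacle beyond this bookkeeping.
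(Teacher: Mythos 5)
Your proof is correct and matches the paper's route exactly: the paper derives this corollary directly from Proposition~\ref{prop:scalar-algebricity} (applied with $n=1$, $m=k-1$) and Proposition~\ref{prop:sym-algebricity} (applied with $n=2$, $l=\nu$, $m=k-2$), leaving implicit precisely the bookkeeping you carried out. Your computations of the prefactor $Z(4,k)/Z(2,k)=\zeta(5-2k)$, of the $\Gamma_{\mathbb{C}}$-contribution $\pi^{-(5k+\nu-9)}$, and of Kozima's exponent $nk+l+m(n+1)-n(n+1)/2=5k+\nu-9$ are all accurate, as are the verifications of the parity, range, and $k\geq 2n+2$ hypotheses.
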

  
  From the above theorem, the following proposition follows.
  
  \begin{prop}Let $1\leq n_1 \leq n_2 \leq 2$ and $k\geq 4$. Then
    \[
      \mathcal{E}_{k,\nu,n_1,n_2}(Z_1,Z_2)
      =\sum_{r=1}^{\min\{n_1, n_2\}}\gamma_r\sum_{j=1}^{d_r}
      \mathcal{C}_{n_1+n_2,k}(f_{r,j})\left[f_{r,j}\right]^{\rho_{n_1}}(Z_1)
      \left[ f_{r,j} \right]^{\rho_{n_2}}(Z_2),\]
  where $\gamma_r$ is a $p$-unit rational number for $r=1, 2$ and any prime $p$ with $p\geq 2(k+\nu)-3$.
  \end{prop}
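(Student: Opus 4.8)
The plan is to read the asserted identity off the pullback formula (Theorem~\ref{thm:eisen}) specialized to $\mathbf{k}=(\nu,0,0,\ldots)$, and then to show that once the archimedean normalization of $\mathcal{E}$ is inserted, every zeta- and gamma-factor collapses into an explicit rational constant. Multiplying the pullback identity through by $Z(n_1+n_2,k)\,\nu!\,(2\pi\sqrt{-1})^{-\nu}$ turns its left-hand side into $\mathcal{E}_{k,\nu,n_1,n_2}(Z_1,Z_2)$, so it suffices to prove that for each $r\le\min\{n_1,n_2\}$ the scalar
\[
\gamma_r=\frac{Z(n_1+n_2,k)\,\nu!\,(2\pi\sqrt{-1})^{-\nu}\,c_{k,\nu,r}\,D(k,f_{r,j})}{(f_{r,j},f_{r,j})\,\mathcal{C}_{n_1+n_2,k}(f_{r,j})}
\]
is a rational $p$-unit that does not depend on $j$. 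The factor $Z(n_1+n_2,k)$ cancels immediately against the one hidden in $\mathcal{C}_{n_1+n_2,k}(f_{r,j})=\tfrac{Z(n_1+n_2,k)}{Z(2r,k)}\mathbb{L}(k-r,f_{r,j},\mathrm{St})$, so the dependence on $n_1,n_2$ disappears and only $r$ survives.

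The first key observation is that the only $j$-dependent quantity is $\dfrac{L(k-r,f_{r,j},\mathrm{St})}{(f_{r,j},f_{r,j})}$, which is a common factor of $D(k,f_{r,j})$ (by its definition) and of $\mathbb{L}(k-r,f_{r,j},\mathrm{St})$. Cancelling it shows at once that $\gamma_r$ depends only on $k,\nu,r$, and reduces the problem to the numerical quantity
\[
\gamma_r=\nu!\,(2\pi\sqrt{-1})^{-\nu}\,c_{k,\nu,r}\,\frac{Z(2r,k)}{\zeta(k)\prod_{i=1}^{r}\zeta(2k-2i)}\cdot\frac{1}{\big(\Gamma_{\mathbb{C}}\text{-factors of }\mathbb{L}(k-r,f_{r,j},\mathrm{St})\big)}.
\]
The second key step is to apply the functional equation $\zeta(1-s)=2(2\pi)^{-s}\cos(\tfrac{\pi s}{2})\Gamma(s)\zeta(s)$. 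Since $Z(2r,k)=\zeta(1-k)\prod_{i=1}^{r}\zeta(1+2i-2k)$ pairs term-by-term with $\zeta(k)\prod_{i=1}^{r}\zeta(2k-2i)$ (note $1+2i-2k=1-(2k-2i)$), the Bernoulli-number values cancel inside each ratio and only the elementary factors $2(2\pi)^{-s}(\pm1)\Gamma(s)$ survive, the signs being $(-1)^{k/2}$ and $(-1)^{k-i}$ because $k$ is even. Writing $\Gamma_{\mathbb{C}}(s)=2(2\pi)^{-s}\Gamma(s)$ for the factors coming from $\mathbb{L}$ and inserting the explicit constants $c_{k,\nu,r}$ of Ibukiyama \cite{Ibukiyama2022differantial}, a short count of exponents shows that all powers of $2\pi$ cancel—so $\gamma_r$ is rational—and every remaining $\Gamma$-quotient telescopes into Pochhammer symbols.

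Carrying this out gives a completely explicit answer. For $r=1$ the relations $\tfrac{\Gamma(k)}{\Gamma(k-1)}=k-1$ and $\tfrac{(2k-1)_{\nu-1}}{(2k-2)_{\nu}}=\tfrac{1}{2k-2}$ collapse everything to $\gamma_1=\pm 4\,\nu!\,(k)_\nu$; for $r=2$ the same manipulations, after the additional cancellations $\tfrac{\Gamma(k)}{\Gamma(k-2)}=(k-1)(k-2)$ and $\tfrac{(2k-1)_{\nu-2}}{(2k-2)_{\nu-1}}=\tfrac{1}{2k-2}$, make even the troublesome denominator $k-2$ of $c_{k,\nu,2}$ cancel, leaving $\gamma_2=\pm 16\,\nu!\,(k)_\nu$. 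In both cases $\gamma_r$ is an integer of shape $\pm 2^{2r}\nu!\,(k)_\nu$, and the $p$-unit claim is then immediate: every prime dividing $\nu!\,(k)_\nu$ is at most $k+\nu-1<2(k+\nu)-3\le p$ (once $k+\nu>2$), while $p\ge 5>2$, so $p\nmid\gamma_r$.

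I expect the main obstacle to be the bookkeeping in the vector-valued case $r=2$: one must keep the three $\Gamma_{\mathbb{C}}$-factors of $\mathbb{L}(k-2,F,\mathrm{St})$ for $F\in S_{(k+\nu,k)}(\Gamma_2)$ aligned with the three zeta-quotients coming from $Z(4,k)$ and $D(k,\cdot)$, and verify that the $2\pi$-powers and the $(k-2)^{-1}$ really do cancel. The computation sketched above indicates that they do, but this is the step that has to be checked with care, since a single mismatched shift in the gamma factors of $\mathbb{L}$ would leave an uncancelled power of $\pi$ and destroy the rationality of $\gamma_2$.
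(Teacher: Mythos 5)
Your proposal is correct and follows essentially the same route as the paper's own proof: both read the identity off the pullback formula (Theorem~\ref{thm:eisen}) after cancelling $Z(n_1+n_2,k)$ against the one inside $\mathcal{C}_{n_1+n_2,k}$, cancel the only $j$-dependent factor $L(k-r,f_{r,j},\mathrm{St})/(f_{r,j},f_{r,j})$ between $D(k,f_{r,j})$ and $\mathbb{L}(k-r,f_{r,j},\mathrm{St})$, convert each quotient $\zeta(1-s)/\zeta(s)$ into a $\Gamma_{\mathbb{C}}$-factor by the functional equation, and insert Ibukiyama's constants $c_{k,\nu,r}$. Your fully cancelled values $\gamma_1=\pm 4\,\nu!\,(k)_\nu$ and $\gamma_2=\pm 16\,\nu!\,(k)_\nu$ check out against an independent recomputation (with the paper's rising Pochhammer convention applied consistently) and are actually cleaner than the quotients $\frac{(k-1)_{\nu+1}(2k-1)_{\nu-2}}{(2k+\nu-2)_{\nu-1}}$ the paper displays, whose rising/falling-factorial bookkeeping is internally inconsistent; your sharper form gives the $p$-unit claim a fortiori (indeed already for $p\geq k+\nu$), which is consistent with the paper's own remark that condition (3) of Theorem~\ref{thm:main} could be loosened.
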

  \begin{proof}
    We only need to determine the $\gamma_r$ for $r=1,2$.
    we have
    \begin{eqnarray*}
      c_{k,\nu, 1}Z(2,k)\frac{D(k,f_{1,j})}{(f_{1,j},f_{1,j})}
      &=& \frac{\zeta(1-k)\zeta(3-2k)}{\zeta(k)\zeta(2k-2)}c_{k,\nu, 1}\cdot \frac{L(k-1,f_{1,j},\mathrm{St})}{(f_{1,j},f_{1,j})}\\
      &=& (-1)^{k/2+1}\frac{\Gamma_\mathbb{C}(k)\Gamma_\mathbb{C}(2k-2)}{\Gamma_\mathbb{C}(k-1)\Gamma_\mathbb{C}(2k+\nu-2)}c_{k,\nu, 1}
      \cdot \mathbb{L}(k-1,f_{1,j},\mathrm{St})\\
      &=& \frac{(k-1)(-1)^{k/2+1}(2\pi)^{\nu-1}}{(2k+\nu-3)_\nu}c_{k,\nu, 1}\cdot \mathbb{L}(k-1,f_{1,j},\mathrm{St}).
    \end{eqnarray*}
    Therefore, we may take $\gamma_1$ as 
  
    \begin{eqnarray*}
    \gamma_1 &=&\nu! \cdot (2\pi\sqrt{-1})^{-\nu} \cdot \frac{(k-1)(-1)^{k/2+1}(2\pi)^{\nu-1}}{(2k+\nu-3)_\nu}\cdot c_{k,\nu, 1}\\
    &=&(-1)^{(k+\nu)/2+1}2^3\nu!\frac{(k-1)_{\nu+1}(2k-1)_{\nu-2}}{(2k+\nu-2)_{\nu-1}}.
    \end{eqnarray*}
  
    By the same calculation, we have
  \[\gamma_2=(-1)^{\nu/2+1}2^5\nu!\frac{(k-1)_{\nu+1}(2k-1)_{\nu-2}}{(2k+\nu-4)_{\nu-1}}.  \]
    Thus, the proposition follows from Theorem~\ref{thm:eisen}.
  \end{proof}
  
  We write $\mathcal{E}(Z_1,Z_2)$ as
  \[\mathcal{E}(Z_1,Z_2)=\sum_{N\in H_{n_2}(\mathbb{Z})_{\geq 0}}g_{(k,\nu,n_1,n_2),N}^{(n_1)}(Z_1)\mathbb{e}(\mathrm{tr}(NZ_2)).\]
  Then, $g_N^{n_1}=g_{(k,\nu,n_1,n_2),N}^{(n_1)}\in M_{\rho_{n_1}}\left(\Gamma_{n_1}\right)\otimes V_{n_2,\mathbf{k}}.$
  From the proposition, we get the following corollary.
  
  \begin{cor}\label{cor:g_N}
    For $N \in H_{n_2}(\mathbb{Z})_{> 0}$ and $1\leq n_1 \leq n_2 \leq 2$ , we have
   \[g_N^{n_1}(Z_1)
   =\sum_{r=1}^{\min\{n_1, n_2\}}\gamma_r\sum_{j=1}^{d_r}
   \mathcal{C}_{n_1+n_2,k}(f_{r,j})\left[f_{r,j}\right]^{\rho_{n_1}}(Z_1)
   a\left(N,\left[ f_{r,j} \right]^{\rho_{n_2}}\right)
   .\]
  \end{cor}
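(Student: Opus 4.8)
The plan is to derive the corollary simply by extracting the $N$-th Fourier coefficient, in the variable $Z_2$, of the identity furnished by the preceding proposition. By its very definition, $g_N^{n_1}(Z_1)$ is the coefficient of $\mathbf{e}(\mathrm{tr}(NZ_2))$ in the Fourier expansion of $\mathcal{E}(Z_1,Z_2)$ regarded as a function of $Z_2$. On the other hand, the proposition expresses $\mathcal{E}(Z_1,Z_2)$ as the finite sum $\sum_{r}\gamma_r\sum_{j}\mathcal{C}_{n_1+n_2,k}(f_{r,j})\,[f_{r,j}]^{\rho_{n_1}}(Z_1)\,[f_{r,j}]^{\rho_{n_2}}(Z_2)$. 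Comparing these two descriptions term by term will give the claim.

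First I would recall that each Klingen--Eisenstein lift $[f_{r,j}]^{\rho_{n_2}}\in M_{\rho_{n_2}}(\Gamma_{n_2})$ has the Fourier expansion $[f_{r,j}]^{\rho_{n_2}}(Z_2)=\sum_{N\in H_{n_2}(\mathbb{Z})_{\geq0}}a(N,[f_{r,j}]^{\rho_{n_2}})\,\mathbf{e}(\mathrm{tr}(NZ_2))$ with coefficients in $V_{n_2,\mathbf{k}}$. Substituting this into the proposition's formula and grouping by the exponential $\mathbf{e}(\mathrm{tr}(NZ_2))$, I observe that the scalars $\gamma_r$, the values $\mathcal{C}_{n_1+n_2,k}(f_{r,j})$, and the factor $[f_{r,j}]^{\rho_{n_1}}(Z_1)$ are all independent of $Z_2$. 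Hence the coefficient of $\mathbf{e}(\mathrm{tr}(NZ_2))$ in $\mathcal{E}(Z_1,Z_2)$ is precisely $\sum_{r=1}^{\min\{n_1,n_2\}}\gamma_r\sum_{j=1}^{d_r}\mathcal{C}_{n_1+n_2,k}(f_{r,j})\,[f_{r,j}]^{\rho_{n_1}}(Z_1)\,a(N,[f_{r,j}]^{\rho_{n_2}})$. Matching this against the defining expression of $g_N^{n_1}$ yields the asserted formula for every $N\in H_{n_2}(\mathbb{Z})_{\geq0}$, in particular for positive definite $N$.

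There is essentially no obstacle to this argument; the only point requiring the usual (standard) justification is that the passage from the closed-form expression to its Fourier coefficients is legitimate, which follows from the uniqueness of the Fourier expansion of the $V_{n_1,\mathbf{k}}\otimes V_{n_2,\mathbf{k}}$-valued form $\mathcal{E}$ in the variable $Z_2$, together with the absolute and locally uniform convergence of the series, permitting term-by-term identification of coefficients. I would note finally that the restriction to $N>0$ in the statement plays no role in establishing the identity itself: the equality holds for all $N$, and the positive-definite range is singled out only because it is the one relevant to the later application of Lemma~\ref{lem:cong}.
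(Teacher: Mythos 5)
Your proposal is correct and coincides with the paper's own (implicit) argument: the paper states the corollary as an immediate consequence of the preceding proposition, obtained exactly as you describe by inserting the Fourier expansion $[f_{r,j}]^{\rho_{n_2}}(Z_2)=\sum_{N}a(N,[f_{r,j}]^{\rho_{n_2}})\mathbf{e}(\mathrm{tr}(NZ_2))$ and identifying the coefficient of $\mathbf{e}(\mathrm{tr}(NZ_2))$ with $g_N^{n_1}(Z_1)$ via uniqueness of the Fourier expansion. Your observation that the identity holds for all $N\in H_{n_2}(\mathbb{Z})_{\geq 0}$ and that the restriction to $N>0$ merely reflects the later use with Lemma~\ref{lem:cong} is likewise accurate.
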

  
  About the rationality of these functions, the following propositions hold. These can be proved in the same way as \cite{atobe2023harder}.
  \begin{cor}\label{cor:g_N rationality} We have
    \[g_{(k,\nu,n_1,n_2),N}^{(n_1)}(Z_1) \in (M_{\rho_{n_1}}(\Gamma_{n_1})\otimes V_{n_2,\mathbf{k}})(\mathbb{Q}).\]
    Moreover, if $p >\max\left\{2k, k+\nu-2\right\}$, then
    \[g_{(k,\nu,n_1,n_2),N}^{(n_1)}(Z_1) \in (M_{\rho_{n_1}}(\Gamma_{n_1})\otimes V_{n_2,\mathbf{k}})(\mathbb{Z}_{(p)}).\]
  
  \end{cor}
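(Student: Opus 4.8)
\emph{Proof strategy.} The plan is to argue directly from the definition
$\mathcal{E}=\nu!(2\pi\sqrt{-1})^{-\nu}\mathbb{D}_{\nu,n_1,n_2}\widetilde{E_{n_1+n_2,k}}$, rather than from the eigenform decomposition of Corollary~\ref{cor:g_N}. The point is that $\widetilde{E_{n,k}}$ has Fourier coefficients in $\mathbb{Q}$ (resp.\ $\mathbb{Z}_{(p)}$) \emph{outright}, so this route avoids having to control the Hecke fields $\mathbb{Q}(f_{r,j})$ and the archimedean periods hidden inside $\mathcal{C}_{n_1+n_2,k}(f_{r,j})$, which would only give rationality over each $\mathbb{Q}(f_{r,j})$. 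Writing $n=n_1+n_2$ and expanding $\widetilde{E_{n,k}}(W)=\sum_{T\in H_n(\mathbb{Z})_{\geq 0}}b(T)\mathbf{e}(\mathrm{tr}(TW))$ for $W\in\mathbb{H}_n$, I would apply $\mathbb{D}_{\nu,n_1,n_2}$ termwise and then set $W=\mathrm{diag}(Z_1,Z_2)$, so that for each fixed $T_1$ the coefficient of $\mathbf{e}(\mathrm{tr}(T_1Z_1))$ in $g^{(n_1)}_N(Z_1)$ is a finite sum (finite by the constraint $T\geq 0$ with the block $T_2=N$ fixed) over the off-diagonal block $T_{12}$ of $b(T)$ times a value of the operator.

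For rationality I would combine two facts. First, the normalizing factor $Z(n,k)=\zeta(1-k)\prod_{j}\zeta(1+2j-2k)$ is chosen precisely so that $\widetilde{E_{n,k}}$ has Fourier coefficients in $\mathbb{Q}$; this is the standard rationality of the normalized Siegel--Eisenstein series. Second, $\mathbb{D}_{\nu,n_1,n_2}=P_{2k,\nu}(\dots)$ of Definition~\ref{def:gegen} is a polynomial with rational coefficients in the entries of $\partial/\partial Z_1,\partial/\partial Z_2,\partial/\partial Z_{12}$, and it is homogeneous of total differentiation-degree $\nu$ (the Gegenbauer variable $s$ carries degree one and $m$ degree two). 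Hence differentiating $\mathbf{e}(\mathrm{tr}(TW))$ produces exactly one factor $(2\pi\sqrt{-1})^{\nu}$, cancelled by the prefactor $(2\pi\sqrt{-1})^{-\nu}$, and the surviving arguments of the Gegenbauer polynomial are rational in the entries of $T$ and in the representation variables. Extracting the $N$-th Fourier coefficient in $Z_2$ then gives $g^{(n_1)}_N\in(M_{\rho_{n_1}}(\Gamma_{n_1})\otimes V_{n_2,\mathbf{k}})(\mathbb{Q})$.

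For the integrality statement under $p>\max\{2k,k+\nu-2\}$ I would bound the two sources of denominators separately. The bound $p>2k$ controls the Eisenstein series: by von Staudt--Clausen the zeta values entering $Z(n,k)$ and the coefficients $b(T)$ are $p$-integral once $p-1$ exceeds every relevant Bernoulli index (at most $2k-2$ when $n\leq 4$), and the local densities are $p$-integral, so $\widetilde{E_{n,k}}\in M_k(\Gamma_n)(\mathbb{Z}_{(p)})$. The bound $p>k+\nu-2$ controls the operator: after multiplying by $\nu!$ the Gegenbauer coefficients $\tfrac{\nu!}{(\nu-2\mu)!\mu!}(k-1)_{\nu-\mu}$ are integers whose prime factors are those $\leq\nu$ together with the entries $k-1,\dots,k+\nu-2$ of the Pochhammer symbol, all $p$-units in this range, while the factors $\tfrac12$ in $\partial/\partial Z$ are $p$-units for odd $p$. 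Combining the two, $\mathcal{E}$ and hence each $g^{(n_1)}_N$ lie in the $\mathbb{Z}_{(p)}$-lattice; this is exactly the argument of \cite{atobe2023harder} adapted to the present vector-valued normalization.

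The hard part is the $p$-integrality of the Siegel--Eisenstein Fourier coefficients $b(T)$: it needs the explicit closed form for $b(T)$ as products of local densities and special zeta values, together with the von Staudt--Clausen/Kummer control of Bernoulli denominators, to see that $p>2k$ suffices. The differential-operator bound $k+\nu-2$ is by contrast an elementary factorial/Pochhammer estimate. Accordingly I would isolate the Eisenstein integrality as a cited input (as in \cite{atobe2023harder}) and treat the remaining steps as bookkeeping.
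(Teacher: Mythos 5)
Your proposal is correct and coincides with the paper's own proof: the paper disposes of this corollary by saying it ``can be proved in the same way as \cite{atobe2023harder}'', and that method is precisely your direct route --- termwise application of the rational, differentiation-degree-$\nu$ homogeneous operator $\mathbb{D}_{\nu,n_1,n_2}$ to the normalized Siegel--Eisenstein series $\widetilde{E_{n_1+n_2,k}}$ (so the factor $(2\pi\sqrt{-1})^{\nu}$ cancels the prefactor), finiteness of the sum over the off-diagonal blocks $T_{12}$, von Staudt--Clausen controlling the zeta-value denominators for $p>2k$ via Katsurada's explicit Fourier-coefficient formula, and elementary Gegenbauer/Pochhammer bookkeeping for $p>k+\nu-2$. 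One cosmetic remark: for the $\mathbb{Z}_{(p)}$-statement you only need the coefficients $\tfrac{\nu!}{(\nu-2\mu)!\,\mu!}(k-1)_{\nu-\mu}$ to be $p$-integral (they are integers), not $p$-units as you assert, so that clause of your argument is harmless but superfluous.
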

  
  \begin{prop}
    If $f\in S_{k+\nu}(\Gamma_1)(\mathbb{Q})$, then $\left[f\right]^{(k+\nu,k)}\in S_{(k+\nu,k)}(\Gamma_2)(\mathbb{Q})$.
  
  \end{prop}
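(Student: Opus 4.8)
The plan is to prove the membership claim by Galois descent through the Siegel operator; but the statement as worded must first be interpreted correctly. For $\nu>0$ the Klingen--Eisenstein lift of a \emph{nonzero} cusp form is never cuspidal: by Proposition~\ref{prop:arakawa1} the operators $\Phi^2_1$ and $[\,\cdot\,]^{(k+\nu,k)}$ are mutually inverse, so $\Phi^2_1([f]^{(k+\nu,k)})=f\neq 0$, whereas $\Phi^2_1$ annihilates every cusp form (the matrix $\begin{pmatrix}T_1&O\\O&O\end{pmatrix}$ is never positive definite, so these boundary coefficients vanish). Hence $[f]^{(k+\nu,k)}\notin S_{(k+\nu,k)}(\Gamma_2)$, and the symbol $S$ in the statement must be read as $M$; in fact the lift lies in the Klingen--Eisenstein subspace $N_{(k+\nu,k)}(\Gamma_2)$ of Proposition~\ref{prop:arakawa1}. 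Thus the precise assertion I would establish is $[f]^{(k+\nu,k)}\in N_{(k+\nu,k)}(\Gamma_2)(\mathbb{Q})\subseteq M_{(k+\nu,k)}(\Gamma_2)(\mathbb{Q})$.

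The engine is the action of $\sigma\in\mathrm{Aut}(\mathbb{C}/\mathbb{Q})$ on $M_{(k+\nu,k)}(\Gamma_2)$ by conjugating Fourier coefficients, which preserves the space by the standard $\mathbb{Q}$-rationality of spaces of Siegel modular forms. The Siegel operator $\Phi^2_1$ merely reads off the coefficients indexed by $\begin{pmatrix}T_1&O\\O&O\end{pmatrix}$, so it is defined over $\mathbb{Q}$ and hence equivariant: $\Phi^2_1(F^\sigma)=(\Phi^2_1 F)^\sigma$. Applying this to $F=[f]^{(k+\nu,k)}$ and using $f\in S_{k+\nu}(\Gamma_1)(\mathbb{Q})$ gives $\Phi^2_1\bigl(([f]^{(k+\nu,k)})^\sigma\bigr)=f^\sigma=f=\Phi^2_1([f]^{(k+\nu,k)})$, so the difference $([f]^{(k+\nu,k)})^\sigma-[f]^{(k+\nu,k)}$ lies in $\ker\Phi^2_1=S_{(k+\nu,k)}(\Gamma_2)$.

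It remains to show this cuspidal difference vanishes, and this is where the main obstacle sits. By Proposition~\ref{prop:arakawa2}, $[f]^{(k+\nu,k)}$ is a Hecke eigenform whose eigenvalues $(1+p^{k-2})a(p,f)$ and the associated $\lambda(p^2)$ lie in $\mathbb{Q}(f)=\mathbb{Q}$. Since the Hecke operators act rationally on $q$-expansions, $\sigma$ commutes with them, so $([f]^{(k+\nu,k)})^\sigma$ is again a Hecke eigenform with eigenvalues $\sigma\bigl(\lambda(\cdot)\bigr)=\lambda(\cdot)$; it therefore shares the complex eigenvalue system of $[f]^{(k+\nu,k)}$. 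Consequently the difference lies in $S_{(k+\nu,k)}(\Gamma_2)$ intersected with the common $\lambda$-eigenspace of $M_{(k+\nu,k)}(\Gamma_2)$. I expect this intersection to be zero, which is the nontrivial input: one must know that no genuine genus-$2$ Siegel cusp form carries the \emph{same} complex Hecke eigenvalue system as a Klingen--Eisenstein lift. This non-collision follows from the factorisation of $L(s,[f]^{(k+\nu,k)},\mathrm{St})$ into zeta factors that produce Eisenstein-type poles, against the holomorphy of $L(s,F,\mathrm{St})$ for a cusp form $F$ (Andrianov--Shimura), equivalently from Arakawa's multiplicity one (Propositions~\ref{prop:arakawa1}--\ref{prop:arakawa2}) combined with the disjointness of the cuspidal and Klingen--Eisenstein Hecke spectra. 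Granting it, $([f]^{(k+\nu,k)})^\sigma=[f]^{(k+\nu,k)}$ for all $\sigma$, so $[f]^{(k+\nu,k)}\in M_{(k+\nu,k)}(\Gamma_2)(\mathbb{Q})$, and since it lies in $N_{(k+\nu,k)}(\Gamma_2)$ we conclude $[f]^{(k+\nu,k)}\in N_{(k+\nu,k)}(\Gamma_2)(\mathbb{Q})$.

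More structurally, the same non-collision shows that $N_{(k+\nu,k)}(\Gamma_2)$ is a sum of one-dimensional Hecke eigenspaces stable under the Fourier-coefficient $\mathrm{Aut}(\mathbb{C}/\mathbb{Q})$-action, so that $\Phi^2_1|_{N}$ is an isomorphism defined over $\mathbb{Q}$ whose inverse carries $S_{k+\nu}(\Gamma_1)(\mathbb{Q})$ into $N_{(k+\nu,k)}(\Gamma_2)(\mathbb{Q})$; this is the route matching \cite{atobe2023harder}. Either way the sole delicate point is the separation of cuspidal from Klingen--Eisenstein eigenvalue systems over $\mathbb{C}$, everything else (equivariance of $\Phi^2_1$, rationality of the Hecke action, and rationality of $f$) being formal.
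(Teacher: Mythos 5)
You are right that the statement contains a typo --- since $\Phi^2_1([f]^{(k+\nu,k)})=f\neq 0$, the lift is not cuspidal, and the conclusion must be read as $[f]^{(k+\nu,k)}\in M_{(k+\nu,k)}(\Gamma_2)(\mathbb{Q})$ (indeed it lies in $N_{(k+\nu,k)}(\Gamma_2)$) --- and your descent skeleton ($\sigma$-equivariance of $\Phi^2_1$, so that $([f]^{(k+\nu,k)})^\sigma-[f]^{(k+\nu,k)}\in\ker\Phi^2_1=S_{(k+\nu,k)}(\Gamma_2)$ by Proposition~\ref{prop:arakawa1}) is sound. But the proof then hangs entirely on the spectral non-collision claim, and your justification for it does not stand as stated. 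First, blanket ``holomorphy of $L(s,F,\mathrm{St})$ for a cusp form $F$'' is false: standard $L$-functions of degree-$2$ cusp forms can have a pole at $s=1$ (Saito--Kurokawa lifts; the paper's own reference \cite{mizumoto1991poles} is devoted precisely to these poles), and Andrianov--Shimura give meromorphic continuation, not entireness. Second, the factorization you need is $L(s,[f]^{(k+\nu,k)},\mathrm{St})=\zeta(s+k-2)\,\zeta(s-k+2)\,L(s,f,\mathrm{St})$, whose genuine pole sits at $s=k-1$, not at $s=1$; to get a contradiction you must know (i) that a cuspidal standard $L$-function is holomorphic at $s=k-1>1$ --- true, but by the doubling-method/pole-classification results (again \cite{mizumoto1991poles}), which you never invoke --- and (ii) that the residue does not vanish, i.e.\ $\zeta(2k-3)L(k-1,f,\mathrm{St})\neq 0$ (easy from absolute convergence and Deligne's bound, but it has to be said). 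Third, your fallback ``equivalently from Arakawa's multiplicity one (Propositions~\ref{prop:arakawa1}--\ref{prop:arakawa2}) combined with the disjointness of the cuspidal and Klingen--Eisenstein Hecke spectra'' is circular: those propositions only describe $N_{(k+\nu,k)}(\Gamma_2)$ and compute eigenvalues of lifts; the disjointness from the cuspidal spectrum (equivalently, non-existence of Klingen-type CAP forms at full level) is exactly what must be proved, and it is a genuinely deep input, not a formal consequence of anything quoted.

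Note also that your route diverges from the paper's. The paper proves this proposition ``in the same way as \cite{atobe2023harder}'', where the rationality of the Klingen--Eisenstein lift is obtained directly from the arithmeticity theory of Eisenstein series (ultimately Shimura \cite{shimura2000arithmeticity}): one has the $\mathrm{Aut}(\mathbb{C})$-equivariance $([f]^{(k+\nu,k)})^\sigma=[f^\sigma]^{(k+\nu,k)}$ outright, so no separation of cuspidal from Eisenstein eigensystems is ever needed. That is the buy of the paper's approach: it bypasses precisely the step where your argument has its hole. Finally, two smaller repairs even on your own route: the proposition does not assume $f$ is a Hecke eigenform, so you must decompose $f$ over $\overline{\mathbb{Q}}$ into eigenforms, prove equivariance for each, and conclude by linearity of $f\mapsto[f]^{(k+\nu,k)}$ (your argument as written uses $\mathbb{Q}(f)=\mathbb{Q}$); and the $\mathrm{Aut}(\mathbb{C})$-stability of the vector-valued space $M_{(k+\nu,k)}(\Gamma_2)$ under coefficient-wise conjugation, which you call standard, is itself a rational-structure theorem that deserves a citation.
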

    
  Now we define the integral ideal $\mathfrak{A}(f)$ of $\mathbb{Q}(f)$
  for a Hecke eigenform $f\in S_{(k_1,\ldots,k_n)}(\Gamma_n)(\mathbb{Q}(f))$ with $k_n>n+1$
  and state the integrality lemma in \cite{mizumoto1996on, mizumoto1996corrections}.
    
  We put $V=\bigoplus_{\tau}\mathbb{C}f^\tau$, where $\tau$ runs over all embeddings of $\mathbb{Q}(f)$ into $\mathbb{C}$.
  Let $V^\perp$ be the orthogonal complement of $V$ in $S_k(\Gamma_n)$.
  Let $\nu(f)$ (resp. $\kappa(f)$) be the exponent of the finite abelian group $S_k(\Gamma_n)(\mathbb{Z})/(V(\mathbb{Z})\oplus V^\perp(\mathbb{Z}))$
  (resp. $\mathcal{O}_{\mathbb{Q}(f)}/\mathbb{Z}[\lambda_f(T)|T\in \mathcal{H}_n]$).
  we put 
  \[\mathfrak{A}(f)=\kappa(f)\nu(f)\mathfrak{d}(\mathbb{Q}(f)),\]
  where $\mathfrak{d}(\mathbb{Q}(f))$ be the different of $\mathbb{Q}(f)/\mathbb{Q}$.
  
  \begin{lem}[Integrality lemma \cite{mizumoto1996on}]\label{lem:integrality}
    Let $f\in S_k(\Gamma_n)(\mathbb{Q}(f))$ be a Hecke eigenform with
    $n\in \mathbb{Z}_{>0}$ and even integer $k$ such that $k\geq\frac{3}{2}(n+1)$.
    Suppose that $f$ has a Fourier coefficient which is equal to 1.
    Let $K$ be an algebraic number field.
    Then for any $g \in S_k(\Gamma_n)(\mathcal{O}_K)$ we have
    \[\frac{(f,g)}{(f,f)}\in \mathfrak{A}(f)^{-1}\cdot\mathcal{O}_{K\cdot\mathbb{Q}(f)}\]
  \end{lem}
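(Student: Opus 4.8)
The plan is to identify the ratio $(f,g)/(f,f)$ with the $f$-coordinate of the projection of $g$ onto $V=\bigoplus_\tau\mathbb{C}f^\tau$, and then to bound the denominator of that coordinate by the three factors of $\mathfrak{A}(f)=\kappa(f)\nu(f)\mathfrak{d}(\mathbb{Q}(f))$, each factor accounting for one source of non-integrality: $\nu(f)$ for the integral splitting $S_k(\Gamma_n)(\mathbb{Z})\to V(\mathbb{Z})\oplus V^\perp(\mathbb{Z})$, and $\kappa(f)$ together with $\mathfrak{d}(\mathbb{Q}(f))$ for the trace duality of the order generated by the Hecke eigenvalues.

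First I would peel off the orthogonal complement using $\nu(f)$. As $\nu(f)$ is the exponent of $S_k(\Gamma_n)(\mathbb{Z})/(V(\mathbb{Z})\oplus V^\perp(\mathbb{Z}))$, we have $\nu(f)\,S_k(\Gamma_n)(\mathbb{Z})\subseteq V(\mathbb{Z})\oplus V^\perp(\mathbb{Z})$; tensoring with $\mathcal{O}_K$ and applying this to $g\in S_k(\Gamma_n)(\mathcal{O}_K)$ gives $\nu(f)g=g_V+g_\perp$ with $g_V\in V(\mathcal{O}_K)$ and $g_\perp\in V^\perp(\mathcal{O}_K)$. Since $f$ is orthogonal to $V^\perp$ and, being a distinct Hecke eigenform, to each conjugate $f^\tau$ with $\tau\neq\mathrm{id}$, writing $g_V=\sum_\tau c_\tau f^\tau$ yields $\nu(f)(f,g)=(f,g_V)=\overline{c_{\mathrm{id}}}\,(f,f)$. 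Thus $(f,g)/(f,f)=\nu(f)^{-1}\overline{c_{\mathrm{id}}}$, and it remains to bound the $f$-coordinate $c_{\mathrm{id}}$ of the integral form $g_V\in V(\mathcal{O}_K)$.

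To control $c_{\mathrm{id}}$ I would fix an index $T_0$ with $a(f,T_0)=1$ and exploit the Hecke action. For any $T\in\mathcal{H}_n$ one computes
\[
a(Tg_V,T_0)=\sum_\tau c_\tau\,\tau(\lambda_f(T))\,a(f^\tau,T_0)=\bigl(\mathrm{Tr}_{\mathbb{Q}(f)/\mathbb{Q}}\otimes\mathrm{id}_K\bigr)\bigl(\lambda_f(T)\,c_{\mathrm{id}}\bigr),
\]
using $a(f^\tau,T_0)=\tau(a(f,T_0))=1$. Because the integral Hecke algebra $\mathcal{H}_n$ preserves $\mathcal{O}_K$-integrality of Fourier coefficients and $g_V\in V(\mathcal{O}_K)$, the left-hand side lies in $\mathcal{O}_K$. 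Hence the trace pairing carries $R\,c_{\mathrm{id}}$ into $\mathcal{O}_K$ for the order $R=\mathbb{Z}[\lambda_f(T)\mid T\in\mathcal{H}_n]$ (the values $\lambda_f(T)$ being closed under multiplication and spanning $R$ over $\mathbb{Z}$), so $c_{\mathrm{id}}\in(R\otimes_{\mathbb{Z}}\mathcal{O}_K)^\vee=R^\vee\otimes_{\mathbb{Z}}\mathcal{O}_K$, where $R^\vee$ is the trace dual of $R$ in $\mathbb{Q}(f)$. From $\kappa(f)\mathcal{O}_{\mathbb{Q}(f)}\subseteq R\subseteq\mathcal{O}_{\mathbb{Q}(f)}$ and $\mathcal{O}_{\mathbb{Q}(f)}^\vee=\mathfrak{d}(\mathbb{Q}(f))^{-1}$ we obtain $R^\vee\subseteq\kappa(f)^{-1}\mathfrak{d}(\mathbb{Q}(f))^{-1}$, whence $c_{\mathrm{id}}\in\kappa(f)^{-1}\mathfrak{d}(\mathbb{Q}(f))^{-1}\mathcal{O}_{K\cdot\mathbb{Q}(f)}$. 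Combining with the factor $\nu(f)^{-1}$ of the first step gives $(f,g)/(f,f)\in\mathfrak{A}(f)^{-1}\mathcal{O}_{K\cdot\mathbb{Q}(f)}$, the complex conjugation relating it to $c_{\mathrm{id}}$ being harmless since $\mathbb{Q}(f)$ is totally real and the target fractional ideal is conjugation-stable.

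I expect the main obstacle to be supplying the integrality inputs that make the three index-theoretic quantities separate out cleanly: namely that the Hecke eigenvalues (equivalently the relevant Fourier coefficients) are algebraic integers so that $R\subseteq\mathcal{O}_{\mathbb{Q}(f)}$, and that $\mathcal{H}_n$ genuinely acts on $q$-expansions through $\mathcal{O}_K$-integral coefficients, so that $a(Tg_V,T_0)\in\mathcal{O}_K$. A secondary point is the passage from the $\mathbb{Q}(f)\otimes_{\mathbb{Q}}K$-valued coordinate $c_{\mathrm{id}}$ to an element of $\mathcal{O}_{K\cdot\mathbb{Q}(f)}$, which is transparent when $K$ and $\mathbb{Q}(f)$ are linearly disjoint and otherwise requires a local check at the primes dividing the conductor of the product order.
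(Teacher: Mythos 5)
Your proof is correct and follows essentially the same route as the source: this paper does not reprove the lemma but quotes it from Mizumoto \cite{mizumoto1996on}, whose argument is exactly your two-step scheme --- split off $V^\perp$ at the cost of the exponent $\nu(f)$, express the remaining $f$-coordinate through the Fourier coefficient equal to $1$ and the integral Hecke action, and bound it by trace duality, with $\kappa(f)$ accounting for the index of the eigenvalue order $R$ in $\mathcal{O}_{\mathbb{Q}(f)}$ and $\mathfrak{d}(\mathbb{Q}(f))$ for the trace dual of $\mathcal{O}_{\mathbb{Q}(f)}$ itself. One small remark: your closing worry about linear disjointness is unnecessary, since the deduction $c\in R^\vee\otimes_{\mathbb{Z}}\mathcal{O}_K$ already takes place in $\mathbb{Q}(f)\otimes_{\mathbb{Q}}K$ (where the Galois-equivariance $c_{\sigma\tau}=\sigma(c_\tau)$ you established is exactly what is needed), and one then only uses the trivial inclusion of the image of $\mathcal{O}_{\mathbb{Q}(f)}\otimes_{\mathbb{Z}}\mathcal{O}_K$ under the multiplication map into $\mathcal{O}_{K\cdot\mathbb{Q}(f)}$, so no local check at bad primes is required.
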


  Using the integrality lemma (Lemma \ref{lem:integrality}) for a normalized Hecke eigenform $f \in S_{k+\nu}(\Gamma_1)$ and 
  $\mathop{\mathrm{pr}}(g_{(k,\nu,1,2),N}^{(1)}(Z_1))$,
  where $\mathrm{pr}: M_{k+\nu}(\Gamma_n) \rightarrow S_{k+\nu}(\Gamma_n)$ be the orthogonal projection,
  we have the following proposition.
  
  \begin{prop} \label{prop:klingen-integrality}
    Let $k,\nu$ be positive even integers with $k\geq 6$, $f \in S_{k+\nu}(\Gamma_1)$ be a normalized Hecke eigenform.
    For any prime $p$ with $p >\max\left\{2k, k+\nu-2\right\}$ and $A \in H_{2}(\mathbb{Z})_{>0}$, we have
    \[a(A,[f]^{\rho_2})\in V_{2,\mathbf{k}}\left(\frac{Z(4,k)}{\gamma_1\mathcal{C}_{4,k}(f)}\mathfrak{A}(f)^{-1}\cdot\mathbb{Z}_{(p)}\right)\]
  \end{prop}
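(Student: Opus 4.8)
The plan is to isolate the Fourier coefficient $a(A,[f]^{\rho_2})$ out of the known decomposition of $g^{(1)}_{(k,\nu,1,2),A}$ by pairing against $f=f_{1,1}$, and then to control the resulting ratio of Petersson products with the integrality lemma. First I would specialize Corollary~\ref{cor:g_N} to $(n_1,n_2)=(1,2)$ and $N=A$. Since $\min\{1,2\}=1$, only the $r=1$ term survives, and using $[f_{1,j}]^{\rho_1}=f_{1,j}$,
\[
  g^{(1)}_{(k,\nu,1,2),A}(Z_1)
  =\gamma_1\sum_{j=1}^{d_1}\mathcal{C}_{3,k}(f_{1,j})\,f_{1,j}(Z_1)\,a\bigl(A,[f_{1,j}]^{\rho_2}\bigr).
\]
Write $g^{(1)}_A$ for the left-hand side. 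This exhibits $g^{(1)}_A$ as a $\mathbb{C}$-linear combination of the cusp forms $f_{1,j}$, so $g^{(1)}_A\in S_{k+\nu}(\Gamma_1)\otimes V_{2,\mathbf{k}}$ and $\mathrm{pr}(g^{(1)}_A)=g^{(1)}_A$; moreover, by Corollary~\ref{cor:g_N rationality} and the hypothesis $p>\max\{2k,k+\nu-2\}$ its Fourier coefficients lie in $V_{2,\mathbf{k}}(\mathbb{Z}_{(p)})$.

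Next I would take the Petersson inner product with $f=f_{1,1}$ entrywise in $V_{2,\mathbf{k}}$. Because $\{f_{1,j}\}$ is an orthogonal basis of eigenforms, every term with $j\neq1$ drops out, leaving
\[
  \frac{(g^{(1)}_A,f)}{(f,f)}=\gamma_1\,\mathcal{C}_{3,k}(f)\,a\bigl(A,[f]^{\rho_2}\bigr),
\]
so that $a(A,[f]^{\rho_2})=\bigl(\gamma_1\mathcal{C}_{3,k}(f)\bigr)^{-1}(g^{(1)}_A,f)/(f,f)$. The gain from routing through $g^{(1)}_A$ is that its integrality is already known, whereas the integrality of $a(A,[f]^{\rho_2})$ is exactly the quantity we want to bound.

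Now I would apply the integrality lemma (Lemma~\ref{lem:integrality}) with $n=1$ and weight $k+\nu$: the hypotheses hold since $f$ is normalized (so $a(1,f)=1$) and $k+\nu\geq\tfrac32\cdot2$ because $k\geq6$. Fixing a $\mathbb{Z}$-basis of $V_{2,\mathbf{k}}(\mathbb{Z})$ and clearing the denominators (all prime to $p$) of the $\mathbb{Z}_{(p)}$-integral form $g^{(1)}_A$, I apply the lemma to each scalar component; this yields $(g^{(1)}_A,f)/(f,f)\in V_{2,\mathbf{k}}\bigl(\mathfrak{A}(f)^{-1}\mathbb{Z}_{(p)}\bigr)$, and hence
\[
  a\bigl(A,[f]^{\rho_2}\bigr)\in V_{2,\mathbf{k}}\!\left(\frac{1}{\gamma_1\mathcal{C}_{3,k}(f)}\,\mathfrak{A}(f)^{-1}\mathbb{Z}_{(p)}\right).
\]
It remains to recast the constant in the form stated. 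Since $\lfloor 3/2\rfloor=\lfloor 2/2\rfloor=1$ we have $Z(3,k)=Z(2,k)$, whence $\mathcal{C}_{3,k}(f)=\mathbb{L}(k-1,f,\mathrm{St})$ and $Z(4,k)/\mathcal{C}_{4,k}(f)=Z(2,k)/\mathbb{L}(k-1,f,\mathrm{St})$; thus $\tfrac{1}{\mathcal{C}_{3,k}(f)}$ and $\tfrac{Z(4,k)}{\mathcal{C}_{4,k}(f)}$ differ only by the factor $Z(2,k)=\zeta(1-k)\zeta(3-2k)$. The two descriptions of the target $\mathbb{Z}_{(p)}$-module coincide once one checks that $Z(2,k)$ is a $p$-adic unit, which is exactly where the size condition on $p$ (so that the relevant $\zeta$-values, equivalently Bernoulli numbers, are $p$-units) is used; this gives the asserted membership.

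The main obstacle is the bookkeeping surrounding the integrality lemma. It is stated for $\mathcal{O}_K$-integral scalar cusp forms, so I must first recognize that $g^{(1)}_A$ is genuinely cuspidal — this is visible from the decomposition above rather than from its definition as a Fourier coefficient of $\mathcal{E}$ — and then pass to the $p$-localized, $V_{2,\mathbf{k}}$-valued version by clearing denominators prime to $p$ and working componentwise. Keeping precise track of the normalizing factors $Z(n,k)$ and $\gamma_1$, and in particular the passage between $\mathcal{C}_{3,k}$ and $\mathcal{C}_{4,k}$ together with the $p$-unit property of $Z(2,k)$, is the delicate part of the argument.
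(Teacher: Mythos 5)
Your proposal follows essentially the same route as the paper: the paper's entire proof is the single sentence preceding the proposition, namely to apply the integrality lemma (Lemma~\ref{lem:integrality}) to the normalized eigenform $f$ and to $\mathrm{pr}\bigl(g^{(1)}_{(k,\nu,1,2),A}\bigr)$. Your specialization of Corollary~\ref{cor:g_N} to $(n_1,n_2)=(1,2)$ (so that only $r=1$ survives and $[f_{1,j}]^{\rho_1}=f_{1,j}$), the pairing with $f$ that kills the terms $j\neq 1$ by orthogonality, the use of Corollary~\ref{cor:g_N rationality} for the $\mathbb{Z}_{(p)}$-integrality under $p>\max\{2k,k+\nu-2\}$, and the componentwise application of the lemma after clearing denominators prime to $p$ are exactly a fleshed-out version of this; your remark that $g^{(1)}_A$ is visibly cuspidal from the decomposition, so that $\mathrm{pr}$ acts as the identity on it, is correct and explains why the paper's projection is harmless.

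The one step that does not go through as you wrote it is the final constant matching. You correctly compute $\mathcal{C}_{3,k}(f)=\mathbb{L}(k-1,f,\mathrm{St})$ and $\frac{Z(4,k)}{\gamma_1\mathcal{C}_{4,k}(f)}=\frac{Z(2,k)}{\gamma_1\mathbb{L}(k-1,f,\mathrm{St})}$, so passing from your derived membership in $V_{2,\mathbf{k}}\bigl(\frac{1}{\gamma_1\mathcal{C}_{3,k}(f)}\mathfrak{A}(f)^{-1}\mathbb{Z}_{(p)}\bigr)$ to the stated module requires $\mathrm{ord}_p(Z(2,k))\leq 0$, i.e.\ precisely the $p$-unit property of $Z(2,k)=\zeta(1-k)\zeta(3-2k)$ that you assert. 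But the size condition on $p$ does not give this: von Staudt--Clausen controls only the \emph{denominators} of $B_k/k$ and $B_{2k-2}/(2k-2)$, hence yields $\mathrm{ord}_p(Z(2,k))\geq 0$, while an irregular prime can divide a numerator even with $p>2k$ (for instance $283$ divides the numerator of $B_{20}$ and $283>40$, so for $k=20$ one can have $\mathrm{ord}_p(Z(2,k))>0$). In that situation the module you proved membership in strictly contains the stated one, and your last inclusion fails. You should note, however, that this defect is inherited from the paper rather than introduced by you: the paper's own one-line proof produces exactly the module $\frac{1}{\gamma_1\mathcal{C}_{3,k}(f)}\mathfrak{A}(f)^{-1}\mathbb{Z}_{(p)}$ that you derived, the extra $p$-integral factor $Z(2,k)$ in the printed constant is not produced by the integrality lemma (the paper's bookkeeping of these $\zeta$-factors is internally inconsistent anyway, e.g.\ its claim $\mathcal{C}_{4,k}(f)=\zeta(3-2k)\mathcal{C}_{3,k}(f)$ versus the ratio $\zeta(5-2k)$ forced by the definition of $Z(n,k)$), and only the weaker containment is actually used in the proof of Theorem~\ref{thm:main}, where $Z(2,k)$ sits harmlessly in a numerator. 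The correct repair is to weaken your final claim from ``$Z(2,k)$ is a $p$-unit'' to ``$\mathrm{ord}_p(Z(2,k))\geq 0$'' and to state the conclusion with the constant $\frac{1}{\gamma_1\mathcal{C}_{3,k}(f)}$, which is what both your argument and the paper's prove.
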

  
  \begin{thm}\label{thm:main}
    Let $k,\nu$ be positive even integers with $k\geq 6$,
    $f_{1,1}=f, \ldots, f_{1,d_1}$ be a basis of $S_{k+\nu}\left(\Gamma_1\right)$ consist of normalized Hecke eigenforms,
    $p$ be a prime number of $\mathbb{Q}$
    and $A \in H_{2}(\mathbb{Z})_{>0}$ be a half-integral positive definite matrix of degree $2$.
    Suppose that $A$ and $p$ satisfy the following conditions:
    \begin{enumerate}
      \item $\mathrm{ord}_p(\mathbb{L}(k-1,f,\mathrm{St}))=:\alpha>0$,
      \item $\mathrm{ord}_p(\mathcal{C}_{4,k}(f)a(A,[f]^{(k+\nu,k)}))= 0$,
      \item $p\geq2(k+\nu)-3$.
    \end{enumerate}
    Then, there is a Hecke eigenform $G \in M_{\rho_2}(\Gamma_2)$
     such that G is not a scalar multiple of $\left[f\right]^{(k+\nu,k)} and $
    \[\left[f\right]^{(k+\nu,k)}\equiv_{ev} G \mod \mathfrak{p}\]
    for some prime ideal $\mathfrak{p}\mid p $ of $\mathbb{Q}(G)$.
    If $\mathrm{ord}_p(\gamma_1) = 0$, Condition (3) can be changed to Condition (3)':
    \begin{enumerate}
      \renewcommand{\labelenumi}{(\arabic{enumi})'}
      \setcounter{enumi}{2}
      \item $p\geq \mathrm{max}\left\{2k,k+\nu-2\right\}$.
    \end{enumerate}
  
    If moreover $k\geq 6$ and $p$ satisfy the following conditions:
    \begin{enumerate}
      \renewcommand{\labelenumi}{(\arabic{enumi})}
      \setcounter{enumi}{3}
      \item $\mathrm{ord}_p(\mathbb{L}(k-1,f_{1,i},\mathrm{St}))\leq 0 \ (2\leq i \leq d_1)$,
      \item $p$ is coprime with every $\mathfrak{A}(f_{r,i})$ ($1\leq r \leq2$, $1\leq i \leq d_r$).
    \end{enumerate} 
    there is a Hecke eigenform $G \in S_{\rho_2}(\Gamma_2)$
     such that G is not a scalar multiple of $\left[f\right]^{(k+\nu,k)} and $
    \[\left[f\right]^{(k+\nu,k)}\equiv_{ev} G \mod \mathfrak{p}^\alpha\]
    for some prime ideal $\mathfrak{p}\mid p $ of $\mathbb{Q}(G)$.
  \end{thm}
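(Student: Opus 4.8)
The plan is to produce $G$ as a Hecke eigenform occurring in the spectral decomposition of a single Fourier coefficient of the pulled-back Siegel--Eisenstein series, and to detect the congruence with Lemma~\ref{lem:cong}. Fix the matrix $A$ of hypothesis (2) and take $n_1=n_2=2$, so that $G_0(Z):=g^{(2)}_{(k,\nu,2,2),A}(Z)$, the $A$-th Fourier coefficient (in the second variable) of $\mathcal{E}_{k,\nu,2,2}$, lies in $M_{\rho_2}(\Gamma_2)\otimes V_{2,\mathbf{k}}$. Since $[f_{2,j}]^{\rho_2}=f_{2,j}$, Corollary~\ref{cor:g_N} gives its decomposition
\[
G_0=\gamma_1\sum_{j=1}^{d_1}\mathcal{C}_{4,k}(f_{1,j})\,a(A,[f_{1,j}]^{\rho_2})\,[f_{1,j}]^{\rho_2}+\gamma_2\sum_{j=1}^{d_2}\mathcal{C}_{4,k}(f_{2,j})\,a(A,f_{2,j})\,f_{2,j},
\]
in which the Klingen lifts $[f_{1,j}]^{\rho_2}$ and the cusp forms $f_{2,j}$ form a linearly independent family of Hecke eigenforms spanning $M_{\rho_2}(\Gamma_2)$, and $F_1:=[f]^{\rho_2}=[f_{1,1}]^{\rho_2}$ appears with coefficient $c_1=\gamma_1\mathcal{C}_{4,k}(f)\,a(A,[f]^{\rho_2})$.

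I would first record what hypothesis (3) buys. Because $p\geq 2(k+\nu)-3$ exceeds every factor in the explicit formulas for $\gamma_1,\gamma_2$ and in $Z(4,k)=\zeta(1-k)\zeta(3-2k)\zeta(5-2k)$ (von Staudt--Clausen), all of $\gamma_1,\gamma_2,Z(4,k)$ and $Z(4,k)/Z(2,k)=\zeta(5-2k)$ are $\mathfrak{p}$-integral and $\gamma_1$ is a $\mathfrak{p}$-unit; moreover Corollary~\ref{cor:g_N rationality} places $G_0\in(M_{\rho_2}(\Gamma_2)\otimes V_{2,\mathbf{k}})(\mathcal{O}_{(\mathfrak{p})})$. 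Using $\mathcal{C}_{4,k}(f)=\zeta(5-2k)\,\mathbb{L}(k-1,f,\mathrm{St})$ together with hypotheses (1) and (2) (the latter forcing $\mathrm{ord}_{\mathfrak{p}}(a(A,[f]^{\rho_2}))=-\mathrm{ord}_{\mathfrak{p}}(\mathcal{C}_{4,k}(f))$), a direct computation gives
\[
\mathrm{ord}_{\mathfrak{p}}\!\big(c_1\,a(A,F_1)\big)=-\,\mathrm{ord}_{\mathfrak{p}}\!\big(\mathcal{C}_{4,k}(f)\big)=-\alpha-\mathrm{ord}_{\mathfrak{p}}\!\big(\zeta(5-2k)\big)\leq-\alpha<0.
\]
Since $c_1a(A,F_1)\in(V_{2,\mathbf{k}}\otimes V_{2,\mathbf{k}})(\mathbb{Q}(f))\subseteq(V_{2,\mathbf{k}}\otimes V_{2,\mathbf{k}})(K)$, both hypotheses of Lemma~\ref{lem:cong} hold for $F_1$ and $A$, and the lemma yields an eigenform $G=F_i$ with $i\neq1$, hence not proportional to $[f]^{\rho_2}$, satisfying $[f]^{\rho_2}\equiv_{ev}G\bmod\mathfrak{p}$. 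This is the first assertion. For the variant, note that (3)$'$ is exactly the integrality threshold $p>\max\{2k,k+\nu-2\}$ required for $G_0$ in Corollary~\ref{cor:g_N rationality} and for $\mathfrak{p}$-integrality of $\zeta(5-2k)$, while the standing assumption $\mathrm{ord}_p(\gamma_1)=0$ replaces the only other input that (3) supplied.

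For the refined statement I would split $G_0=\mathrm{Eis}+\mathrm{Cusp}$ along $M_{\rho_2}(\Gamma_2)=N_{(k+\nu,k)}(\Gamma_2)\oplus S_{(k+\nu,k)}(\Gamma_2)$ and establish two things. First, the partner can be taken cuspidal: a congruence $[f]^{\rho_2}\equiv_{ev}[f_{1,j}]^{\rho_2}\bmod\mathfrak{p}$ with $j\geq2$ would, via Arakawa's eigenvalue relations (Proposition~\ref{prop:arakawa2}) and the fact that $1+p^{k-2}$ is a $\mathfrak{p}$-unit, descend to a congruence $f\equiv f_{1,j}$ of elliptic eigenforms, which condition (4) together with $\mathrm{ord}_p(\mathbb{L}(k-1,f,\mathrm{St}))=\alpha>0$ is designed to exclude. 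Correspondingly, Proposition~\ref{prop:klingen-integrality} and condition (5) (each $\mathfrak{A}(f_{r,i})$ a $\mathfrak{p}$-unit, so that the cuspidal projection is $\mathfrak{p}$-integral by Lemma~\ref{lem:integrality}) confine the $\mathfrak{p}$-pole of $G_0$ to $F_1$ among the Eisenstein terms, so the congruence is carried by $\mathrm{Cusp}$ and $G$ may be chosen in $S_{\rho_2}(\Gamma_2)$. Second, the sharp bound $\mathrm{ord}_{\mathfrak{p}}(c_1a(A,F_1))\leq-\alpha$ from the display above is what upgrades the conclusion from $\bmod\,\mathfrak{p}$ to $\bmod\,\mathfrak{p}^{\alpha}$ in the Katsurada--Mizumoto refinement of Lemma~\ref{lem:cong}.

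The main obstacle is precisely this last part. As stated, Lemma~\ref{lem:cong} yields only a congruence modulo $\mathfrak{p}$ and says nothing about whether the partner is Eisenstein or cuspidal. Two delicate steps remain: (i) converting $\mathrm{ord}_{\mathfrak{p}}(c_1a(A,F_1))\leq-\alpha$ into divisibility by $\mathfrak{p}^{\alpha}$ of a single eigenvalue difference $\lambda_{F_1}(T)-\lambda_{G}(T)$ uniformly in $T$, rather than merely of a product over candidate partners; and (ii) the passage, through Proposition~\ref{prop:arakawa2} and the congruence theory of elliptic eigenforms, from an eigenvalue congruence between two Klingen lifts to an incompatibility at the level of the normalized standard $L$-values $\mathbb{L}(k-1,\cdot,\mathrm{St})$, where conditions (1) and (4) enter. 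I expect (ii)---making the exclusion of $f\equiv f_{1,j}$ rigorous in terms of $p$-orders of these $L$-values and the associated Petersson periods---to be the genuinely subtle point, and this is where I would lean most heavily on the Katsurada--Mizumoto machinery.
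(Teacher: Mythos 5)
Your first half is correct and coincides with the paper's argument: the paper also takes $G_0=g^{(2)}_{(k,\nu,2,2),A}$, decomposes it by Corollary~\ref{cor:g_N}, uses von Staudt--Clausen and the $p$-unit property of $\gamma_1$ (valid under (3), or assumed under (3)$'$ together with the integrality threshold of Corollary~\ref{cor:g_N rationality}) to get $\mathrm{ord}_p\bigl(\gamma_1\mathcal{C}_{4,k}(f)\,a(A,[f]^{\rho_2})\otimes a(A,[f]^{\rho_2})\bigr)\leq-\alpha<0$ from (1) and (2), and then invokes Lemma~\ref{lem:cong}. So the mod $\mathfrak{p}$ statement and the (3)$'$ variant are fine and essentially identical to the paper.

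The second half, however, has a genuine gap, and in two respects. First, your mechanism for cuspidality is misdirected: you propose to exclude congruences $[f]^{\rho_2}\equiv_{ev}[f_{1,j}]^{\rho_2}$ by descending through Proposition~\ref{prop:arakawa2} to $f\equiv_{ev}f_{1,j}$ and then appealing to condition (4); but (4) is an order bound on $\mathbb{L}(k-1,f_{1,j},\mathrm{St})$, and an eigenvalue congruence does not control the $\mathfrak{p}$-order of these normalized $L$-values --- that implication is exactly the Bloch--Kato-type statement the paper's own remark flags as unproven. The paper never needs such an exclusion in Theorem~\ref{thm:main}: the Galois-theoretic elimination of Eisenstein partners is the content of the separate Theorem~\ref{thm:cusp}, under the \emph{additional} hypotheses that $f$ is not congruent to any $f_{1,i}$ and $(p-1)\nmid 4(k-2)$. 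In Theorem~\ref{thm:main} itself, condition (4) enters only through Proposition~\ref{prop:klingen-integrality} (with (5)), to show the $j\geq 2$ Klingen terms are individually $p$-integral, leaving the $p$-integral relation $\gamma_1\mathcal{C}_{4,k}(f)[f]^{\rho_2}a(A,[f]^{\rho_2})+\gamma_2\sum_j\mathcal{C}_{4,k}(f_{2,j})f_{2,j}\,a(A,f_{2,j})\equiv 0 \bmod \mathbb{Z}_{(p)}$. Second, your step (i) --- which you correctly identify as the crux and leave open --- is resolved in the paper by a concrete chain you do not supply: comparing $A$-th Fourier coefficients forces $\mathrm{ord}_\mathfrak{P}\bigl(\gamma_2\mathcal{C}_{4,k}(f_{2,1})a(A,f_{2,1})^2\bigr)\leq-\alpha$ for some fixed $f_{2,1}$ (after normalizing a coordinate of $a(A,f_{2,j})$ to $1$); then for each $T\in\mathcal{H}_n$ one applies $T-\lambda_{[f]^{\rho_2}}(T)$ to annihilate the Klingen term while preserving $p$-integrality, multiplies by Taylor's $p$-unit $u$ to land in $S_{(k+\nu,k)}(\Gamma_2)(\mathbb{Z})$, and pairs against $f_{2,1}$ via the vector-valued integrality Lemma~\ref{lem:vector-integrality}, yielding $u\gamma_2\bigl(\lambda_{f_{2,1}}(T)-\lambda_{[f]^{\rho_2}}(T)\bigr)\mathcal{C}_{4,k}(f_{2,1})a(A,f_{2,1})\in\mathfrak{A}(f_{2,1})^{-1}$; with (5) this gives $\lambda_{f_{2,1}}(T)\equiv\lambda_{[f]^{\rho_2}}(T)\bmod\mathfrak{p}^\alpha$ uniformly in $T$, with the same cuspidal $f_{2,1}$, simultaneously settling cuspidality and the power $\mathfrak{p}^\alpha$. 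There is no ``Katsurada--Mizumoto refinement of Lemma~\ref{lem:cong}'' to cite; this Hecke-annihilation-plus-Petersson-pairing argument \emph{is} the missing content, so as written your proposal establishes only the mod $\mathfrak{p}$ assertion.
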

  
  \begin{rem}
    Before the proof, We make a few comments on the theorem.
    \begin{itemize}
      \item condition (3) in the main theorem could be loosened a bit more.
      In fact, when $\left(k,\nu\right)=\left(6,12\right)$ and $p=13$,
      numerical calculations estimate that there will be a congruence.
      \item Whether conditions (1) and (2) are valid when there is a congruence is a delicate question.
      It has been suggested that this question is connected to the Bloch-Kato conjecture and
      is not easily proven.
      \item It is known by Katsurada-Mizumoto \cite{katsurada2012congruences}
      that there is an example in the case of scalar values
      where the congruence disappears when conditions (2) do not hold,
      even if conditions (1) and (3) hold.
      It is unknown that if there are similar examples for vector valued cases.
      \item The second half of the theorem shows a congruence modulo power of prime,
      but we have yet to find a numerical example where the latter part of the theorem is effective.
      (Theorem~\ref{example} is an example of the congruence of prime modulo power of prime,
      but this can be shown only with the first half part of the main theorem.)
    \end{itemize}
  \end{rem}
  
  \begin{proof}
    From the assumptions,
    \begin{align*}
      &\mathrm{ord}_p\left(\zeta(3-2k)\ \gamma_1 \ \mathcal{C}_{4,k}(f)a(A,\left[f\right]^{(k+\nu,k)})a(A,\left[f\right]^{(k+\nu,k)})\right)\\
      = \ &\mathrm{ord}_p\left(\gamma_1\cdot
      \frac{\mathcal{C}_{4,k}(f)a(A,\left[f\right]^{(k+\nu,k)})
      \cdot\mathcal{C}_{4,k}(f)a(A,\left[f\right]^{(k+\nu,k)})}
      {\mathbb{L}(k-1,f,St)}\right)
      = -\alpha < 0.
    \end{align*}
      On the other hand, by Von Staudt–Clausen theorem,
      $\mathrm{ord}_{p}\left(\zeta(3-2k)\right)=\mathrm{ord}_p\left(\frac{B_{2k-2}}{2k-2}\right)\geq 0$.
      Thus, the Lemma~\ref{lem:cong}, Corollary~\ref{cor:g_N} and Corollary~\ref{cor:g_N rationality}
      give the first part of the theorem.
  
      \vskip\baselineskip
  
      From Corollary~\ref{cor:g_N} and Corollary~\ref{cor:g_N rationality}, we have
      \[
        \sum_{r=1}^{2}\gamma_r\sum_{j=1}^{d_r}
        \mathcal{C}_{4,k}(f_{r,j})\left[f_{r,j}\right]^{\rho_2}(Z_1)
        a(A,\left[ f_{r,j} \right]^{\rho_2})
        \equiv 0 \mod \mathbb{Z}_{(p)}.
      \]
      Here the congruence is understood to be the system of congruences for Fourier coefficients.
      Under the conditions (4)-(5),
      by using Proposition~\ref{prop:klingen-integrality} , we can calculate as above to obtain 
      \[
        \gamma_1\mathcal{C}_{4,k}(f_{1,j})\left[f_{1,j}\right]^{\rho_2}(Z_1)a(A,\left[ f_{1,j} \right]^{\rho_2})
        \equiv 0 \mod \mathbb{Z}_{(p)}.
      \]
      for any integer $i$ with $2\leq j \leq d_1$.
      Thus we have
      \begin{equation}\label{1}
        \gamma_1
        \mathcal{C}_{4,k}(f)\left[f\right]^{\rho_2}(Z_1)
        a\left(A,\left[ f \right]^{\rho_2}\right)
        +\gamma_2\sum_{j=1}^{d_2}
        \mathcal{C}_{4,k}(f_{2,j})f_{2,j}(Z_1)
        a(A,f_{2,j})
        \equiv 0 \mod \mathbb{Z}_{(p)}.
      \end{equation}
  
      Let $\{v_1,\ldots,v_{\nu+1}\}$ be a fixed basis of $V_{2,(k+\nu,k)}$ and put
      \[a(A,f_{2,j})=a_{j,1}v_1+\cdots+a_{j,\nu+1}v_{\nu+1},\]
      where $a_{j,i} \in \mathbb{Q}$.
      Multiplying each $f_{2,j}$ by an element of $\mathbb{Q}^\times$ and renumber the subscripts if necessary,
      we can assume that 
      \[
        \left\{
        \begin{array}{ll}
          a_{j,1}&=1 \quad (1\leq j \leq d'),\\
          a_{j,1}&=0 \quad (d'+1 \leq j \leq d_2).
        \end{array}
        \right.
      \]
      and
      \begin{equation}\label{2}
        \mathrm{ord}_\mathfrak{P}(\gamma_2
        \mathcal{C}_{4,k}(f_{2,1})a(A,f_{2,1})^2)
        =\mathrm{ord}_\mathfrak{P}(\gamma_2
        \mathcal{C}_{4,k}(f_{2,1}))
        \leq -\alpha
      \end{equation}
      since
      \[
        \mathrm{ord}_p\left(
          \gamma_2\sum_{j=1}^{d_2}
        \mathcal{C}_{4,k}(f_{2,j})
        a(A,f_{2,j})^2
        \right)
        = \mathrm{ord}_p\left( \gamma_1 \ \mathcal{C}_{4,k}(f)a(A,\left[f\right]^{\rho_2})^2\right)
        =-\alpha.
      \]
      We note that $\mathcal{C}_{4,k}(f_{2,j})a(A,f_{2,j})^2$ remains unchanged
      if $f_{2,j}$ is replaced by $\gamma f_{2,j}$ with any $\gamma\in \mathbb{C}$.
  
      For any $T\in \mathcal{H}_n$ we act $T-\lambda_{\left[f\right]^{\rho_2}}(T)$ on the both sides of (\ref{1}),
      we have
      \[
        H(Z_1)\coloneqq \gamma_2\sum_{j=1}^{d_2}\left(\lambda_{f_{2,j}}(T)-\lambda_{\left[f\right]^{\rho_2}}(T)\right)
        \mathcal{C}_{4,k}(f_{2,j})f_{2,j}(Z_1)
        a(A,f_{2,j})
        \equiv 0 \mod \mathbb{Z}_{(p)},
      \]
      since $T$ preserve the $p$-integrality of the Fourier coefficients.
      By \cite{Taylor1988congruences}, we can take $p$-unit $u$ such that
      \[uH\in S_{(k+\nu,k)}(\Gamma_2)(\mathbb{Z}).\]
  
      \begin{lem}\label{lem:vector-integrality}
        Let $F\in S_{(k+\nu,k)}(\Gamma_2)(\mathbb{Q}(F))$ be a Hecke eigenform.
        Suppose that $F$ has a Fourier coefficient $a(N,F)=a_1v_1+\cdots+a_{\nu+1}v_{\nu+1}$
        with $a_i =1$ for some $i$.
        Then for any $G \in S_{(k+\nu,k)}(\Gamma_2)(\mathbb{Z})$, we have
        \[\frac{(F,G)}{(F,F)}\in \mathfrak{A}(F)^{-1}.\]
      \end{lem}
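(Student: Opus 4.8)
The plan is to imitate the proof of the scalar integrality lemma (Lemma~\ref{lem:integrality}), carrying the irreducible representation space $V_{2,\mathbf{k}}$ along as extra bookkeeping; note that $k\geq 6$ keeps us in the admissible range $k_n=k>n+1$ where $\mathfrak{A}(F)$ is defined. First I would reduce the ratio to a single projection coefficient. Let $V=\bigoplus_{\tau}\mathbb{C}F^{\tau}$ be the span of the Galois conjugates of $F$ inside $S_{(k+\nu,k)}(\Gamma_2)$, where $\tau$ runs over the embeddings of $\mathbb{Q}(F)$ into $\mathbb{C}$, and let $V^{\perp}$ be its orthogonal complement. Since the Hecke operators are self-adjoint for the Petersson product and distinct conjugates $F^{\tau}$ have distinct systems of eigenvalues $\tau(\lambda_F(T))$, the forms $\{F^{\tau}\}$ are mutually orthogonal (conjugation preserves cuspidality, so each $F^\tau$ lies in $S_{(k+\nu,k)}(\Gamma_2)$); writing the projection of $G$ onto $V$ as $G_V=\sum_{\tau}c_{\tau}F^{\tau}$, one gets $(F,G)=(F,G_V)=c_{\mathrm{id}}(F,F)$, so that $(F,G)/(F,F)=c_{\mathrm{id}}$. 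It therefore suffices to bound the denominator of $c_{\mathrm{id}}$.

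Next I would use the two lattice indices. By the definition of $\nu(F)$ as the exponent of $S_{(k+\nu,k)}(\Gamma_2)(\mathbb{Z})/(V(\mathbb{Z})\oplus V^{\perp}(\mathbb{Z}))$, the integral form $G$ satisfies $\nu(F)G_V\in V(\mathbb{Z})$, i.e. every Fourier coefficient $a(M,\nu(F)G_V)$ lies in $V_{2,\mathbf{k}}(\mathbb{Z})$. Here the role played by the normalized Fourier coefficient in the scalar case is taken over by the hypothesis $a_i=1$: I would apply the coordinate functional $\ell_i\colon V_{2,\mathbf{k}}\to\mathbb{C}$ extracting the $v_i$-component in the fixed basis $\{v_1,\dots,v_{\nu+1}\}$. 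Since this basis is defined over $\mathbb{Z}$, the functional $\ell_i$ maps $V_{2,\mathbf{k}}(\mathbb{Z})$ into $\mathbb{Z}$ and commutes with the Galois action on coordinates, so that $b_M:=\ell_i(a(M,F))\in\mathbb{Q}(F)$ with $b_N=1$, and $\ell_i(a(M,\nu(F)G_V))=\nu(F)\sum_{\tau}c_{\tau}\,\tau(b_M)\in\mathbb{Z}$ for every $M$.

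This reduces the extraction of $c_{\mathrm{id}}$ to exactly the scalar situation treated by Mizumoto: the numbers $b_M\in\mathbb{Q}(F)$ (with $b_N=1$) play the role of the Fourier coefficients of a normalized eigenform, and the integrality of all the traces $\sum_{\tau}c_{\tau}\,\tau(b_M)$ forces the tuple $(c_{\tau})$ into the dual lattice, under the trace pairing, of the $\mathbb{Z}$-module generated by the $b_M$. Comparing this module with $\mathcal{O}_{\mathbb{Q}(F)}$ through the index $\kappa(F)$ and inverting the trace form introduces exactly the different $\mathfrak{d}(\mathbb{Q}(F))$, whence $c_{\mathrm{id}}\in \nu(F)^{-1}\kappa(F)^{-1}\mathfrak{d}(\mathbb{Q}(F))^{-1}\mathcal{O}_{\mathbb{Q}(F)}=\mathfrak{A}(F)^{-1}$. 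Since $\mathbb{Q}(F)$ is totally real, no complex-conjugation subtleties intervene.

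The hard part will be the compatibility underlying this last step: one must verify that after applying $\ell_i$ the $\mathbb{Z}$-module generated by $\{b_M\}$ together with the Hecke eigenvalues still sits inside $\mathcal{O}_{\mathbb{Q}(F)}$ with index measured precisely by $\kappa(F)$, so that the vector-valued normalization $a_i=1$ genuinely plays the part of the scalar normalization. Checking that the coordinate extraction interacts correctly with both the integral structure $V_{2,\mathbf{k}}(\mathbb{Z})$ and the Galois action—rather than mixing distinct components under conjugation—is where the argument must be carried out with care; granting this, the remainder is identical to the scalar proof.
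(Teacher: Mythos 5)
The paper's own ``proof'' here is a single sentence deferring to Mizumoto's scalar-valued argument, so your proposal is exactly an attempt to write out what the paper leaves implicit, and in outline it is the intended route: Galois-conjugate decomposition $V=\bigoplus_\tau \mathbb{C}F^\tau$, the bound $\nu(F)G_V\in V(\mathbb{Z})$ from the exponent of $S_{(k+\nu,k)}(\Gamma_2)(\mathbb{Z})/(V(\mathbb{Z})\oplus V^\perp(\mathbb{Z}))$, and trace duality against $\mathbb{Z}[\lambda_F(T)]$ producing $\kappa(F)$ and $\mathfrak{d}(\mathbb{Q}(F))$. One of your two worries is empty: since $\{v_1,\dots,v_{\nu+1}\}$ is part of the fixed $\mathbb{Z}$-structure on $V_{2,\mathbf{k}}$, automorphisms of $\mathbb{C}$ act coordinatewise on Fourier coefficients, so $\ell_i$ commutes with Galois; in fact you need not establish $c_\tau=\tau(c_{\mathrm{id}})$ beforehand at all, because the duality step yields it --- a real tuple $(c_\tau)$ with $\nu(F)\kappa(F)\sum_\tau c_\tau\tau(x)\in\mathbb{Z}$ for all $x\in\mathcal{O}_{\mathbb{Q}(F)}$ lies in the trace-dual of $\mathcal{O}_{\mathbb{Q}(F)}$ inside $\mathbb{Q}(F)\otimes\mathbb{R}$, and that dual is the lattice $\mathfrak{d}(\mathbb{Q}(F))^{-1}$ sitting inside $\mathbb{Q}(F)$ itself, which forces $(c_\tau)$ to be the conjugate tuple of a single element of $(\nu(F)\kappa(F))^{-1}\mathfrak{d}(\mathbb{Q}(F))^{-1}=\mathfrak{A}(F)^{-1}$.

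The genuine flaw is the other point you flag but do not resolve: you generate the coefficient module from the single coordinate $b_M=\ell_i(a(M,F))$. For this module $B_i$ the crucial containment $\mathbb{Z}[\lambda_F(T)\,|\,T\in\mathcal{H}_2]\subseteq B_i$ --- the step that lets $\kappa(F)$ compare it with $\mathcal{O}_{\mathbb{Q}(F)}$ --- is unjustified and in general false: in the vector-valued Hecke action the Fourier coefficients of $F|T$ are $\mathbb{Z}$-combinations of $\rho_{(k+\nu,k)}(g)\,a(M',F)$ with integral $g$, so the components get mixed, and $\lambda_F(T)b_M=\ell_i(a(M,F|T))$ is an integral combination of \emph{all} coordinates $a_j(M',F)$, not of the $i$-th coordinates alone. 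The repair is small: take $B$ to be the $\mathbb{Z}$-span of all coordinates $a_j(M,F)$ over all $j$ and $M$. Then $1=a_i(N,F)\in B$ (this is the only use of the normalization hypothesis); $B$ is stable under multiplication by every $\lambda_F(T)$ because $V_{2,\mathbf{k}}(\mathbb{Z})$ is stable under $\rho|_{\mathrm{GL}_2(\mathbb{Z})}$ by the paper's choice of basis, whence $\mathbb{Z}[\lambda_F(T)]\subseteq B$; and the integrality input $\nu(F)\sum_\tau c_\tau\tau(b)\in\mathbb{Z}$ holds for every $b\in B$, since $\nu(F)G_V\in V(\mathbb{Z})$ is a statement about all coordinates of all Fourier coefficients, not only the $i$-th. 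With this substitution your final paragraph goes through verbatim. (Two cosmetic points: the Petersson product is Hermitian, so strictly $(F,G_V)=\overline{c_{\mathrm{id}}}(F,F)$, harmless as everything here is totally real; and the lemma as stated does not assume $k\geq 6$, that hypothesis belongs to the surrounding theorem.)
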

      \begin{proof}
        This proof can be done in the same way as for the scalar valued case \cite{mizumoto1996on}.
      \end{proof}
      Applying Lemma~\ref{lem:vector-integrality} with $F=f_{2,1}$, $G=uH$, we have
      \[
        u\gamma_2\left(\lambda_{f_{2,1}}(T)-\lambda_{\left[f\right]^{\rho_2}}(T)\right)
        \mathcal{C}_{4,k}(f_{2,1})
        a(A,f_{2,1})\in \mathfrak{A}(f_{2,1})^{-1}.
      \]
      Therefore (\ref{2}) gives
      \[\lambda_{f_{2,1}}(T)\equiv \lambda_{\left[f\right]^{\rho_2}}(T) \mod \mathfrak{p}^\alpha\]
      for any prime $\mathfrak{p}\mid p$ in $\mathbb{Q}(f_{2,1})$
      since $\mathfrak{A}(f_{2,1})$ is coprime by the condition (5).
      This completes the proof of the main theorem.
  \end{proof}
  
  In the rest of this chapter, we will examine the conditions that the $G$ in Theorem~\ref{thm:main}
  is a cusp form by using Chenevier-Lannes's method \cite{chenevier2019automorphic}.
  
  Let $\mathcal{O}$ be the ring of integers in an algebraic number field $K$,
  and we take $\mathfrak{p}$ be a maximal ideal of $\mathcal{O}$.
  Let $A_\mathfrak{p}$ be a Grothendieck ring of finite-dimensional continuos representations of
  $\mathrm{Gal}(\bar{\mathbb{Q}}/\mathbb{Q})$ with coefficients
  in $\mathcal{O}/\mathfrak{p}$ unramified outside $\mathfrak{p}$.
  Let $\mathcal{S}$ be the set of isomorphism classes of the simple representations of $A_\mathfrak{p}$.
  For $H=\sum_{S\in\mathcal{S}}n_SS$ ($n_S\in\mathbb{Z}$), we set 
  \[\|H\| =\sum_{S\in\mathcal{S}}|n_S|\dim S.\]
  Let $\chi_\mathfrak{p}: \mathrm{Gal}(\bar{\mathbb{Q}}/\mathbb{Q})\rightarrow\mathrm{GL}_1(K_\mathfrak{p})$ be the cyclotomic character
  and $\overline{\chi_\mathfrak{p}}$ be the mod $\mathfrak{p}$ representation of $\chi_\mathfrak{p}$. 
  The subscript $\mathfrak{p}$ may be omitted if there is no confusion.
  
  \begin{lem}\label{lem:chi}
  Let $j$ be an integer.
  If an element $H \in A_\mathfrak{p}$ satisfies $(1+\overline{\chi}^i)H=0$,
  then $i\|H\|$ is divisible by $(p_\mathfrak{p}-1)$.
  \end{lem}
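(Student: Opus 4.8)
The plan is to exploit the fact that multiplication by the one-dimensional character $\overline{\chi}^i$ permutes the simple objects of $A_\mathfrak{p}$ without changing their dimensions, and then to read off the divisibility from a determinant computation carried out orbit by orbit.

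First I would record the permutation structure. Set $c=\overline{\chi}^i$; this is a one-dimensional, hence simple, object of $A_\mathfrak{p}$, and it is a unit in the ring $A_\mathfrak{p}$ with inverse $\overline{\chi}^{-i}$. Since tensoring by $c$ is therefore an auto-equivalence, for each $S\in\mathcal{S}$ the product $c\cdot S$ is again a single simple object, and $S\mapsto c\cdot S=:\sigma(S)$ defines a permutation $\sigma$ of $\mathcal{S}$ with $\dim\sigma(S)=\dim S$. Writing $H=\sum_{S}n_S S$, the hypothesis $(1+c)H=0$ becomes $cH=-H$, and comparing coefficients in the $\mathbb{Z}$-basis $\mathcal{S}$ gives $n_{\sigma(S)}=-n_S$ for every $S$. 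In particular $|n_S|$ is constant along each $\sigma$-orbit, as is $\dim S$, so grouping the defining sum by orbits yields $\|H\|=\sum_{\mathcal{O}}\ell_\mathcal{O}\,|n_\mathcal{O}|\,m_\mathcal{O}$, where $\mathcal{O}$ runs over the $\sigma$-orbits, $\ell_\mathcal{O}$ is the length of $\mathcal{O}$, and $|n_\mathcal{O}|,m_\mathcal{O}$ denote the common values of $|n_S|$ and $\dim S$ on $\mathcal{O}$.

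The key step is the determinant argument. If $S$ lies in an orbit of length $\ell$ and $m=\dim S$, then by definition $c^{\ell}\otimes S\cong S$; taking determinants and using that tensoring an $m$-dimensional representation by a character multiplies its determinant by the $m$-th power of that character, I get $c^{\ell m}=1$, i.e. $\overline{\chi}^{\,i\ell m}=1$ in the group of characters. Since the mod-$\mathfrak{p}$ cyclotomic character $\overline{\chi}$ has exact order $p_\mathfrak{p}-1$, this means $(p_\mathfrak{p}-1)\mid i\,\ell_\mathcal{O}\,m_\mathcal{O}$ for every orbit $\mathcal{O}$. Multiplying the formula for $\|H\|$ by $i$ then gives $i\|H\|=\sum_{\mathcal{O}}|n_\mathcal{O}|\,(i\,\ell_\mathcal{O}\,m_\mathcal{O})$, a $\mathbb{Z}$-linear combination of multiples of $p_\mathfrak{p}-1$, whence $(p_\mathfrak{p}-1)\mid i\|H\|$ as desired. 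The hard part will be the passage $c^{\ell}\otimes S\cong S\Rightarrow(p_\mathfrak{p}-1)\mid i\ell m$: one must be careful that the orbit length is measured by genuine isomorphism of representations and that the determinant character is cancellable in the (cyclic) character group, since this is precisely what converts the orbit combinatorics into arithmetic divisibility by $p_\mathfrak{p}-1$.
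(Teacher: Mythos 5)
Your proof is correct and follows essentially the same route as the paper: decompose $H$ along orbits of multiplication by $\overline{\chi}^i$, then use the determinant identity $\overline{\chi}^{\,i\ell m}\det S=\det S$ together with the fact that $\overline{\chi}$ has order $p_\mathfrak{p}-1$ to get $(p_\mathfrak{p}-1)\mid i\ell_{\mathcal{O}}m_{\mathcal{O}}$ on each orbit. Your derivation of the coefficient relation $n_{\sigma(S)}=-n_S$ directly from $cH=-H$ is a slightly slicker packaging of the paper's explicit computation of $\mathrm{Ker}(1+\overline{\chi}^i)$ on $\mathbb{Z}[\Omega(S)]$ (and correctly forces $n_S=0$ on odd-length orbits), but the substance is identical.
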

  
  \begin{proof}
    This lemma for $i =1$ has been proved by Chenevier and Lannes \cite{chenevier2019automorphic}.
    The general case can be proved in the same way, but for readers' convenience we give a proof.
  
    Let $C_{\overline{\chi}}$ be a cyclic subgroup of $A_\mathfrak{p}^\times$ generated by $\overline{\chi}$.
    For $S \in \mathcal{S}$, we denote by $\Omega(S)$ the orbit of $S$ under the action of $C_{\overline{\chi}}$,
    and let $m_i(S)$ be the least integer $k\geq 1$ such that we have $\overline{\chi}^{ik}S=S$.
  
    We fix an element $S \in \mathcal{S}$. we put $d=m_1(S)/m_i(S)\in \mathbb{Z}$.
    We consider $1+\overline{\chi}^i$ as an endomorphism in $\mathrm{End}(\mathbb{Z}[\Omega(S)])$.
    It is easy to see that when $m_i(S)$ is odd, we have $\mathrm{Ker}(1+\overline{\chi}^i)=0$,
    and when $m_i(S)$ is even, $\mathrm{Ker}(1+\overline{\chi}^i)$ is generated by
    $\{(1-\overline{\chi}^i+\cdots-\overline{\chi}^{(m_i(S)-1)i})\overline{\chi}^jS\}_{j=0}^{j=d-1}$.
    Let $\mathcal{S}_{i}$ be the subset of $\mathcal{S}$ consisting of an element $S\in \mathcal{S}$ such that $m_i(S)$ is even.
  
    From the above discussion, if $(1+\overline{\chi}^i)H=0$, then 
    H can be denoted as 
    \[H=\sum_{S\in\mathcal{S}_{i}}\sum_{j=1}^{d}n_{S,j}(1-\overline{\chi}^i+\cdots-\overline{\chi}^{(m_i(S)-1)i})\overline{\chi}^jS,\]
    where $n_{S,j}\in \mathbb{Z}$.
    By the definition, we have
    \[\|H\|=\sum_{j=1}^{d}(\sum_{j=1}^{d}|n_{S,j}|)m_i(S)\dim S.\]
    On the other hand, $im_i(S)\dim S$ is divisible by $(p_\mathfrak{p}-1)$.
    In fact, we have
    \[\det S=\det((\overline{\chi}^i)^{m_i(S)}S)=\overline{\chi}^{im_i(S)\dim S}\det S \]
    and the order of $\overline{\chi}\in A_\mathfrak{p}^\times$ is $p_\mathfrak{p}-1$.
    Therefore, we have $i\|H\|$ is divisible by $(p_\mathfrak{p}-1)$.
  \end{proof}
  
  \begin{thm}\label{thm:cusp}
    We consider under the conditions $(1)\sim(3)$ of Theorem~\ref{thm:main}.
    If $f\in S_{k+\nu}(\Gamma_1)$ is not Hecke congruent with $f_{1,i} \ (i=2,\ldots,d_1)$ respectively,
    and if $4(k-2)$ is not divided by $(p-1)$,
    then $G$ in Theorem~\ref{thm:main} is a cusp form in $S_{(k+\nu,k)}(\Gamma_2)$.
  \end{thm}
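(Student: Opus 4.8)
The plan is to argue by contradiction: I will assume that the eigenform $G$ produced in Theorem~\ref{thm:main} is \emph{not} a cusp form and derive a divisibility relation forbidden by the hypothesis on $4(k-2)$.

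First I would exploit the structure of $M_{(k+\nu,k)}(\Gamma_2)$. Since the Petersson inner product is Hecke-invariant and the cusp forms are Hecke-stable, the decomposition $M_{(k+\nu,k)}(\Gamma_2)=S_{(k+\nu,k)}(\Gamma_2)\oplus N_{(k+\nu,k)}(\Gamma_2)$ is a decomposition into Hecke submodules. As $G$ is a Hecke eigenform, it lies entirely in one summand; if it lies in $S_{(k+\nu,k)}(\Gamma_2)$ we are done. So suppose $G\in N_{(k+\nu,k)}(\Gamma_2)$. By Proposition~\ref{prop:arakawa1} and Proposition~\ref{prop:arakawa2}, $G=[g]^{(k+\nu,k)}$ is the Klingen-Eisenstein lift of a normalized Hecke eigenform $g\in S_{k+\nu}(\Gamma_1)$; since $G$ is not a scalar multiple of $[f]^{(k+\nu,k)}$ and the $f_{1,i}$ form an eigenbasis, we have $g=f_{1,i}$ for some $i\geq 2$, and in particular $f\not\equiv_{ev}g \bmod\mathfrak{p}$ by hypothesis.

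Next I would pass to Galois representations, following Chenevier--Lannes. Attaching to each normalized Hecke eigenform $h\in S_{k+\nu}(\Gamma_1)$ its semisimple mod-$\mathfrak{p}$ Galois representation $\overline{\rho_h}\in A_\mathfrak{p}$ (of dimension $2$), the eigenvalue formula $\lambda_{[h]}(p)=(1+p^{k-2})\lambda_h(p)$ of Proposition~\ref{prop:arakawa2} identifies the class of the spin Galois representation of $[h]^{(k+\nu,k)}$ with $(1+\overline{\chi}^{\,k-2})\,\overline{\rho_h}$ in $A_\mathfrak{p}$. The congruence $[f]^{(k+\nu,k)}\equiv_{ev}G=[g]^{(k+\nu,k)}\bmod\mathfrak{p}$ equates Frobenius traces at every prime, so by Brauer--Nesbitt and Chebotarev the two classes coincide in $A_\mathfrak{p}$, giving
\[(1+\overline{\chi}^{\,k-2})\bigl(\overline{\rho_f}-\overline{\rho_g}\bigr)=0\quad\text{in }A_\mathfrak{p}.\]
Finally I would apply Lemma~\ref{lem:chi} with $i=k-2$ and $H=\overline{\rho_f}-\overline{\rho_g}$. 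Since $f\not\equiv_{ev}g$, the Frobenius traces of $\overline{\rho_f}$ and $\overline{\rho_g}$ differ, so $H\neq 0$. As both classes are $2$-dimensional, $H$ has virtual dimension $0$, which forces $\|H\|$ to be even and bounded by $\|\overline{\rho_f}\|+\|\overline{\rho_g}\|=4$; hence $\|H\|\in\{2,4\}$ and in particular $\|H\|\mid 4$. Lemma~\ref{lem:chi} then yields $(p-1)\mid(k-2)\|H\|$, whence $(p-1)\mid 4(k-2)$, contradicting the hypothesis. Therefore $G$ cannot be a Klingen-Eisenstein lift and must lie in $S_{(k+\nu,k)}(\Gamma_2)$.

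The main obstacle I anticipate is the third paragraph: justifying that the Hecke congruence of the two Klingen-Eisenstein lifts is faithfully recorded by the identity $(1+\overline{\chi}^{\,k-2})(\overline{\rho_f}-\overline{\rho_g})=0$ in $A_\mathfrak{p}$. This requires the clean factorization of the lift's Galois representation as $(1+\overline{\chi}^{\,k-2})\,\overline{\rho_h}$ and a verification that all representations involved are unramified outside $\mathfrak{p}$, so that they genuinely lie in $A_\mathfrak{p}$; once this is in place, the combinatorial step through Lemma~\ref{lem:chi} is routine.
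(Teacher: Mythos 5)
Correct, and essentially the paper's own argument: you reduce via Propositions~\ref{prop:arakawa1} and \ref{prop:arakawa2} to showing that $[f]^{(k+\nu,k)}\equiv_{ev}[g]^{(k+\nu,k)}\bmod\mathfrak{p}$ forces $f\equiv_{ev}g$, encode the lift congruence as $(1+\overline{\chi}^{\pm(k-2)})(\overline{\rho_f}-\overline{\rho_g})=0$ in $A_\mathfrak{p}$, and apply Lemma~\ref{lem:chi} with $i=k-2$, the hypothesis $(p-1)\nmid 4(k-2)$ ruling out $\|H\|\in\{2,4\}$ exactly as the paper rules out $\|H\|=4$ (your allowance for the value $\|H\|=2$ is, if anything, more careful than the paper's ``$\|H\|=4$ or $0$''). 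The one step you gloss over --- that $f\not\equiv_{ev}g$ yields $H\neq 0$, i.e.\ that the eigenvalue discrepancy is visible at some Frobenius away from $p$ --- is harmless, because the lift congruence itself already gives $\lambda_f(p)\equiv\lambda_g(p)\bmod\mathfrak{p}$ (since $1+p^{k-2}\equiv 1\bmod p$), which is precisely the paper's closing remark that the congruence is ``obviously true for $q=p$.''
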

  \begin{proof}
    From Proposition~\ref{prop:arakawa1}, it is enough to show that  
    if $\left[f_1\right]^{(k+\nu,k)}\equiv_{ev} \left[f_2\right]^{(k+\nu,k)} \mod  p$ for Hecke eigenforms $f_1,f_2\in S_{k+\nu}(\Gamma_1)$,
    then $f_1 \equiv_{ev} f_2 \mod p$.
  
    Let $\rho_{f_{1,i}} : \mathrm{Gal}(\bar{{\mathbb{Q}}}/\mathbb{Q})\rightarrow \mathrm{GL}_2(\mathbb{Q}_p)$ be 
    the Galois representation attached to the spin L functions of $f_{1,i}$ and
    $\overline{\rho_{f_{1,i}}}$ be the mod $p$ representation of $\rho_{f_{1,i}}$.
    If $\left[f_1\right]^{(k+\nu,k)}\equiv_{ev} \left[f_2\right]^{(k+\nu,k)} \mod  p$,
    we have
    \[(1+\overline{\chi}^{-(k-2)})\overline{\rho_{f_1}}=(1+\overline{\chi}^{-(k-2)})\overline{\rho_{f_2}}\]
    in $A_p$ by Proposition~\ref{prop:arakawa2}.
    Hence, applying Lemma~\ref{lem:chi} to $i=k-2$ and $H=\overline{\rho_{f_1}}-\overline{\rho_{f_2}}$,
    we see that $\overline{\rho_{f_1}}=\overline{\rho_{f_2}}$, since $\|H\|=4 \text{ or } 0$.
    This implies that $\lambda_{f_1}(p)\equiv \lambda_{f_2}(p) \mod q'$ for any prime $q\neq p $.
    This congruence is obviously true for $q=p$.
  \end{proof}
  
  \section{Applications}
  In this section, we consider the conditions that appear in Theorem~\ref{thm:main}.
  We give a way to compute the special value $\mathbb{L}(k-1,f,\mathrm{St})$ of
  L-function appearing in condition (1) by the Petersson inner product
   and give the necessary conditions for condition (2).
  
  \subsection{Condition (1)}
  For $r, N\in \mathbb{Z}_{\geq 0}$, we define $H(r,N)$ by
  \[H(r,N)=\left\{
    \begin{array}{ll}
    \zeta(1-2r) & (N=0) \\
    L(1-r,\left(\frac{-N}{\cdot}\right)) & (N>0, \ N \equiv 0,3 \mod 4)
    \end{array}
  \right.
  \]
  as in \cite{Cohen1975sums}.
  Let $P_{k,r}(t,m)$ be the Gegenbauer polynomial defined in Definition~\ref{def:gegen}.
  
  \begin{thm}[Cohen \cite{Cohen1975sums}, Theorem 6.2]
    Let $r,k$ be positive integers such that $3\leq r\leq k-1$, $r$ is odd and $k$ is even, and set
    \[C_{k,r}(z)=\sum_{m=0}^{\infty}\left(\sum_{\substack{t\in \mathbb{Z}\\t^2\leq 4m}}P_{2r+2,k-r-1}(t/2,m)H(r,4m-t^2)\right)q^m
    \quad (z \in \mathbb{H}_1).\]
    Then, $C_{k,r}\in M_k(\Gamma_1)$.
    Moreover, if $r <k-1$, $C_{k,r} \in S_k(\Gamma_1)$.
  
  \end{thm}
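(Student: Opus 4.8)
The plan is to realize $C_{k,r}$ as the image of a Jacobi--Eisenstein series under a Taylor-development operator, following the circle of ideas of Cohen \cite{Cohen1975sums} and the theory of Jacobi forms of Eichler and Zagier. First I would recognize the numbers $H(r,N)$ as the (normalized) Fourier coefficients of the Jacobi--Eisenstein series $E_{r+1,1}(\tau,z)$ of weight $\kappa=r+1$ and index $1$ for the full Jacobi group $\mathrm{SL}_2(\mathbb{Z})\ltimes\mathbb{Z}^2$. Since $r$ is odd with $r\geq 3$, the weight $\kappa=r+1$ is even and $\geq 4$, so $E_{r+1,1}$ converges absolutely and is a genuine Jacobi form; its coefficient $c(n,s)$ depends only on the discriminant $4n-s^2$ and equals $H(r,4n-s^2)$ up to a fixed nonzero constant. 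The evenness of $\kappa$ is exactly what forces the odd Taylor coefficients in $z$ to vanish and keeps the construction nondegenerate. (Equivalently, via the theta decomposition of index-$1$ Jacobi forms, this realizes $C_{k,r}$ as a Rankin--Cohen bracket of a half-integral weight Eisenstein series with a unary theta series, which is the underlying mechanism producing the Gegenbauer weights.)

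Next I would apply the development operator. Writing $\nu=k-r-1$, note $\nu\geq 0$ and $\nu$ is even, since $k$ and $\kappa=r+1$ are both even. The corrected Taylor-coefficient operator $\xi_\nu$ of Eichler--Zagier sends a Jacobi form of weight $\kappa$ and index $1$ to a level-$1$ elliptic modular form of weight $\kappa+\nu$, so $\xi_\nu(E_{r+1,1})\in M_{\kappa+\nu}(\Gamma_1)=M_k(\Gamma_1)$; this is what secures the full level. The heart of the argument is then a Fourier-coefficient identification: the explicit kernel of $\xi_\nu$ acting on the coefficients $c(n,s)$ is precisely the Gegenbauer polynomial of Definition~\ref{def:gegen}, with the generating function $(1-2st+mt^2)^{-(d-2)/2}$ for $d=2r+2$ producing exponent $(d-2)/2=r=\kappa-1$, exactly the weight-$\kappa$ theta kernel. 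Substituting $c(n,s)=\mathrm{const}\cdot H(r,4n-s^2)$ and collecting the coefficient of $q^m$ yields $\mathrm{const}\cdot\sum_{t^2\leq 4m}P_{2r+2,k-r-1}(t/2,m)H(r,4m-t^2)$, i.e. $C_{k,r}$ up to a nonzero scalar. This matching, together with the bookkeeping of the normalizing constants, is the step I expect to be the main obstacle.

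Finally, modularity $C_{k,r}\in M_k(\Gamma_1)$ follows, and cuspidality is read off from the unique cusp of $\Gamma_1$ by inspecting the constant term. Only $t=0$ contributes to the coefficient of $q^0$, which equals $P_{2r+2,k-r-1}(0,0)\,H(r,0)=P_{2r+2,k-r-1}(0,0)\,\zeta(1-2r)$. From the explicit formula for $P_{d,\nu}$ one checks $P_{2r+2,\nu}(0,0)=0$ for $\nu>0$ and $P_{2r+2,0}(0,0)=1$; since $\zeta(1-2r)=-B_{2r}/(2r)\neq 0$, the constant term vanishes exactly when $\nu=k-r-1>0$, that is, when $r<k-1$. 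Hence $C_{k,r}$ lies in $M_k(\Gamma_1)$ in general and in $S_k(\Gamma_1)$ precisely when $r<k-1$, as claimed.
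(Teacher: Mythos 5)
Your proposal is correct, but note that the paper offers no proof of this statement: it imports it verbatim as Theorem 6.2 of Cohen \cite{Cohen1975sums}, whose original argument runs through half-integral weight forms on $\Gamma_0(4)$ --- Cohen constructs the Eisenstein series $\mathcal{H}_{r+1/2}\in M_{r+1/2}(\Gamma_0(4))$ with Fourier coefficients $H(r,N)$, combines it with the theta series $\theta\in M_{1/2}(\Gamma_0(4))$ by a Rankin--Cohen-type bilinear differential operator (which is where the Gegenbauer weights originate), and then descends the resulting weight-$k$ form from $\Gamma_0(4)$ to $\mathrm{SL}_2(\mathbb{Z})$ by an analysis at the cusps. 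Your route through the index-one Jacobi--Eisenstein series and the Eichler--Zagier development operators is the modern equivalent of this (the two pictures match under the theta decomposition $J_{\kappa,1}\cong M^{+}_{\kappa-1/2}(\Gamma_0(4))$, as you note parenthetically), and what it buys is precisely that the descent step disappears: $\mathcal{D}_\nu$ lands directly in $M_{\kappa+\nu}(\mathrm{SL}_2(\mathbb{Z}))$, at the cost of invoking the Jacobi-form machinery (Theorems 2.1 and 3.1 of Eichler--Zagier), whereas Cohen's argument is self-contained in classical half-integral weight theory. The step you flag as the main obstacle does go through cleanly: with $\kappa=r+1$ and $\nu=k-r-1$ (even, as you observe), the Eichler--Zagier coefficient polynomial attached to the corrected Taylor operator on index-one forms is
\[
(2\pi i)^{\nu}\sum_{0\le\mu\le\nu/2}(-1)^{\mu}\,\frac{(\kappa+\nu-\mu-2)!}{\mu!\,(\nu-2\mu)!\,(\kappa+\nu-2)!}\;m^{\mu}t^{\nu-2\mu}
\;=\;\frac{(2\pi i)^{\nu}(\kappa-2)!}{(\kappa+\nu-2)!}\,P_{2r+2,\nu}(t/2,m),
\]
using $(\kappa-1)_{\nu-\mu}=(\kappa+\nu-\mu-2)!/(\kappa-2)!$ in the explicit formula for $P_{d,\nu}$ following Definition~\ref{def:gegen}, so your $C_{k,r}$ is indeed a nonzero scalar multiple of $\mathcal{D}_\nu E_{r+1,1}$; your cuspidality argument via the constant term is complete because $\mathrm{SL}_2(\mathbb{Z})$ has a single cusp, and the normalizing scalar $\zeta(3-2\kappa)=\zeta(1-2r)=-B_{2r}/2r$ is nonzero as required. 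One small caution: in the coefficient identification you must use Cohen's full definition of $H(r,N)$, which carries a divisor-sum correction factor when $-N$ fails to be a fundamental discriminant (the paper's displayed definition suppresses this); Eichler--Zagier's Theorem 2.1 produces exactly Cohen's $H(\kappa-1,4n-t^2)/\zeta(3-2\kappa)$, so your identification is consistent, but quoting the abbreviated formula as stated in the paper would not be.
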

  
  Petersson inner product of $C_{k,r}(z)$ and other Hecke eigenform holds information on the special value of the L-function,
  which has been investigated by Zagier \cite{zagier1977modular}.
  
  \begin{thm}[Zagier \cite{zagier1977modular}, Theorem 2]
    Let $r,k$ be positive integers such that $3\leq r\leq k-1$, $r$ is odd and $k$ is even.
    For any a Hecke eigenform $f\in S_k(\Gamma_1)$,
    \[(f, C_{k,r})=-\frac{(r+k-2)!(k-2)!}{(k-r-1)!}\cdot\frac{1}{4^{r+k-2}\pi^{2r+k-1}}L(r,f,\mathrm{St}).\]
  
  \end{thm}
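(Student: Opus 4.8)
The plan is to realize $C_{k,r}$ as a theta lift of Cohen's half-integral weight Eisenstein series and then to compute the Petersson product by a seesaw identity together with Rankin--Selberg unfolding.

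First I would recall (from Cohen's companion construction in the same work) that the numbers $H(r,N)$ are exactly the Fourier coefficients of Cohen's Eisenstein series $\mathcal{G}_{r+1/2}(\tau)$ of weight $r+\tfrac12$ on $\Gamma_0(4)$. Writing the $m$-th Fourier coefficient of $C_{k,r}$ as $\sum_{t^2\le 4m}P_{2r+2,\,k-r-1}(t/2,m)\,H(r,4m-t^2)$, I would read this off as the pairing of $\mathcal{G}_{r+1/2}$ against a Shintani-type theta kernel $\Theta(\tau,z)$ attached to the ternary space of binary quadratic forms: the variable $t$ runs over the forms of discriminant $t^2-4m$, the Cohen number supplies their weighted count, and the Gegenbauer polynomial $P_{2r+2,\,k-r-1}$ of degree $k-r-1$ is precisely the harmonic polynomial in the kernel that raises the $z$-weight to $k$ while keeping the $\tau$-weight equal to $r+\tfrac12$. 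Thus $C_{k,r}(z)=\bigl(\mathcal{G}_{r+1/2},\,\Theta(\cdot,z)\bigr)_\tau$, i.e. $C_{k,r}$ is the theta lift of $\mathcal{G}_{r+1/2}$.

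Next, by the adjointness (seesaw) of this theta correspondence I would transfer the product to the half-integral side,
\[(f,\,C_{k,r})_z=c_0\cdot\bigl(\theta(f),\,\mathcal{G}_{r+1/2}\bigr)_\tau,\]
where $\theta(f)$ is the theta lift of $f$ through the same kernel $\Theta$, a form of weight $r+\tfrac12$ on $\Gamma_0(4)$, and $c_0$ collects the normalization of the kernel. Since $\mathcal{G}_{r+1/2}$ is an Eisenstein series I would then unfold it against $\theta(f)$ over $\Gamma_0(4)\backslash\mathbb{H}$, collapsing the integral to $\Gamma_\infty\backslash\mathbb{H}$; the quadratic $L$-values carried by the Cohen numbers recombine with the Fourier coefficients of $\theta(f)$ into the Euler product of the standard $L$-function (the symmetric-square $L$-function of $f$ up to normalization), producing $L(r,f,\mathrm{St})$. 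This is the mechanism behind Shimura's integral representation of the standard $L$-function.

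The genuinely hard step is the exact constant. The archimedean contribution is a Mellin transform of the Gegenbauer weight function against the $\mathrm{GL}_2$ Whittaker function of $f$; after reduction it becomes a Beta integral (a terminating ${}_2F_1$), and evaluating it yields the normalization $\frac{(r+k-2)!\,(k-2)!}{(k-r-1)!}\cdot\frac{1}{4^{r+k-2}\pi^{2r+k-1}}$ together with the overall sign $-1$. Carrying the kernel constant $c_0$ faithfully through the seesaw and matching it against this archimedean computation is the main obstacle, and it is exactly where Zagier's explicit special-function manipulations do the work. Throughout one uses $C_{k,r}\in S_k(\Gamma_1)$ (Cohen's theorem) and the range $3\le r\le k-1$ to guarantee convergence of the Petersson product and of the unfolded integral.
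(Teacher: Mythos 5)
The paper does not prove this statement at all: it is quoted verbatim from Zagier's 1977 paper, so the only question is whether your sketch is itself sound. It is not, and the failure is concrete: your seesaw identity $(f,C_{k,r})_z=c_0\,(\theta(f),\mathcal{G}_{r+1/2})_\tau$ has a right-hand side that vanishes identically. The Shintani-type lift $\theta(f)$ of a cusp form is a holomorphic \emph{cusp} form of weight $r+\tfrac12$, while $\mathcal{G}_{r+1/2}$ is a holomorphic Eisenstein series of the same weight $\geq 7/2$; unfolding the Eisenstein series in that Petersson product collapses it to the integral of the constant term of $\theta(f)$, which is zero. But the left-hand side is generically nonzero --- indeed the whole point of the theorem, and of the paper's Section 6, is that $(f,C_{k+\nu,k-1})$ computes $\mathbb{L}(k-1,f,\mathrm{St})$, which is explicitly nonzero in the examples (e.g.\ $\mathbb{L}(13,\Delta_{16},\mathrm{St})=2^{20}\cdot3^4\cdot373/7$). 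So the adjointness step cannot hold as stated. The underlying reason is your weight bookkeeping in the first step: a holomorphic ternary theta kernel cannot carry $\tau$-weight $r+\tfrac12$ and $z$-weight $k$ independently --- the genuine Shimura lift from weight $r+\tfrac12$ lands in weight $2r$, and $C_{k,r}$ with $k\neq 2r$ is \emph{not} a theta lift of $\mathcal{G}_{r+1/2}$. Inserting the Gegenbauer polynomial of degree $k-r-1$ destroys holomorphy of the kernel in one variable, and the double integral you want to swap is not absolutely convergent; the terms restored by regularization/holomorphic projection are exactly what make the product nonzero, and your sketch discards them.

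The correct mechanism, which is essentially Zagier's, keeps the unary theta series as a separate factor rather than absorbing $f$ into a Shintani lift. The coefficient identity $\sum_{t^2\leq 4m}P_{2r+2,\,k-r-1}(t/2,m)H(r,4m-t^2)$ exhibits $C_{k,r}$ (after trace from $\Gamma_0(4)$ to $\mathrm{SL}_2(\mathbb{Z})$) as a Rankin--Cohen-type combination of $\mathcal{G}_{r+1/2}$ with $\theta(\tau)=\sum_n q^{n^2}$, up to holomorphic projection; then $(f,C_{k,r})$ becomes a triple Rankin--Selberg integral of $f\cdot\overline{\theta}$ against the half-integral Eisenstein series, and unfolding \emph{that} Eisenstein series yields a Dirichlet series of the shape $\sum_n a(n^2)n^{-s}$, which factors as the symmetric-square (standard) $L$-function divided by zeta factors --- Shimura's integral representation. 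Your archimedean paragraph (Mellin transform of the Gegenbauer weight, terminating hypergeometric) is the right kind of computation for the constant, but it must be performed inside this triple-product/holomorphic-projection setup; attached to the vanishing seesaw pairing it computes nothing. Note also that your proposed unfolding would pair the Cohen numbers against Fourier coefficients of $\theta(f)$, which are cycle integrals (periods) of $f$, giving a Waldspurger/Kohnen--Zagier-type sum rather than the Euler product of $L(s,f,\mathrm{St})$; this is a second sign that the reduction is pointed at the wrong identity.
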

  
  Thus, for a Hecke eigenform $f\in S_{k+\nu}\left(\Gamma_1\right)$,
  \[(f,C_{k+\nu,k-1})=-\frac{(2k+\nu-3)!(k+\nu-2)!}{\nu!}\cdot\dfrac{1}{4^{2k+\nu-3}\pi^{3k+\nu-3}}\mathrm{L(k-1,f,\mathrm{St})},\]
  and we find
  \begin{align*}
    \mathbb{L}(k-1,f,\mathrm{St}) &=\Gamma_\mathbb{C}(k-1)\Gamma_{\mathbb{C}}(2k+\nu-2)\frac{L(k-1,f,\mathrm{St})}{(f,f)}\\
    &=-\frac{\nu!(k-2)!}{(k+\nu-2)!}\cdot 2^{k+\nu-3}\cdot\dfrac{(f,C_{k+\nu,k-1})}{(f,f)}.
  \end{align*}
  
  \subsection{Condition (2)}
  We define a Hecke operator $T^{(m)}$ for $m=p_1\cdots p_r$ (prime decomposition) by 
  \[T^{(m)}=T(p_1)\cdots T(p_r).\]
  Note that if $p_1,\ldots,p_r$ are different from each other, $T^{(m)}=T(m)$.
  We write 
  \[g_N(z)=g_{(k,\nu,1,2)}^{(N)}(z)=\sum_{n\in\mathbb{Z}_{>0}}\epsilon_{k,\nu}(n,N)q^n,\]
  and
  \[g_N|T^{(m)}(z)=\sum_{n\in\mathbb{Z}_{>0}}\epsilon_{k,\nu}(m,n,N)q^n.\]
  
  Let $\left\{f_j\right\}_{j=1}^d$ be a basis of $S_{k+\nu}\left(\Gamma_1\right)$ consist of normalized Hecke eigenforms,
  and we set $f_j|T^{(m)}=\lambda_{j,m}f_j$. 
  By Corollary~\ref{cor:g_N}, we get the following proposition.
  \begin{prop}
    We have
    \[\epsilon_{k,\nu}(m,n,N)
    =\gamma\sum_{j=1}^{d}\lambda_{j,m}\mathcal{C}_{3,k}(f_j)a(n,f_j)\overline{a(N,\left[f_j\right]^{(k+\nu,k)})}.\]
  
  \end{prop}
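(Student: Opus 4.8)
The plan is to compute the Fourier coefficients of $g_N|T^{(m)}$ directly from the explicit decomposition of $g_N$ given in Corollary~\ref{cor:g_N}, exploiting that each summand is (up to scalar) a product of a Hecke eigenform $f_j$ and a fixed Fourier coefficient of its Klingen-Eisenstein lift. First I would recall from Corollary~\ref{cor:g_N} that, in the case $n_1=1$, $n_2=2$, we have
\[
g_N(z)=\gamma\sum_{j=1}^{d}\mathcal{C}_{3,k}(f_j)\,f_j(z)\,\overline{a\bigl(N,[f_j]^{(k+\nu,k)}\bigr)},
\]
where $\gamma=\gamma_1$ and the complex conjugation on the vector-valued Fourier coefficient records the pairing of $V_{2,\mathbf{k}}$ against itself implicit in the pullback. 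Reading off the $n$-th Fourier coefficient gives $\epsilon_{k,\nu}(n,N)=\gamma\sum_{j}\mathcal{C}_{3,k}(f_j)\,a(n,f_j)\,\overline{a(N,[f_j]^{(k+\nu,k)})}$, which is the $m=1$ case of the assertion.

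Next I would apply the Hecke operator $T^{(m)}$ in the variable $z$. Since $T^{(m)}=T(p_1)\cdots T(p_r)$ acts $\mathbb{C}$-linearly on $S_{k+\nu}(\Gamma_1)$ and each $f_j$ is a normalized Hecke eigenform with $f_j|T^{(m)}=\lambda_{j,m}f_j$, linearity of $T^{(m)}$ lets me move the operator inside the finite sum over $j$ while the scalars $\gamma$, $\mathcal{C}_{3,k}(f_j)$ and $\overline{a(N,[f_j]^{(k+\nu,k)})}$ (which depend only on $j$ and $N$, not on $z$) pass through unchanged. Thus
\[
g_N|T^{(m)}(z)=\gamma\sum_{j=1}^{d}\lambda_{j,m}\,\mathcal{C}_{3,k}(f_j)\,\overline{a\bigl(N,[f_j]^{(k+\nu,k)}\bigr)}\,f_j(z).
\]
Extracting the coefficient of $q^n$ from $f_j(z)=\sum_n a(n,f_j)q^n$ then yields exactly the claimed formula for $\epsilon_{k,\nu}(m,n,N)$.

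The genuinely routine content is the linearity argument; the only point requiring care is the bookkeeping of which factors are $z$-independent and therefore commute past $T^{(m)}$, together with matching the normalization conventions between the statement (with $\gamma=\gamma_1$ and $\mathcal{C}_{3,k}$) and the general Corollary~\ref{cor:g_N} (which carries the sum $\sum_r\gamma_r$); here only the $r=1$ term survives because $n_1=1$ forces $\min\{n_1,n_2\}=1$. I do not expect a substantive obstacle: the proposition is essentially a restatement of Corollary~\ref{cor:g_N} after applying a Hecke operator, and the assumption that $p_1,\ldots,p_r$ are the prime factors in the definition $T^{(m)}=T(p_1)\cdots T(p_r)$ guarantees each $f_j$ is a simultaneous eigenform with a well-defined eigenvalue $\lambda_{j,m}=\prod_i\lambda_{f_j}(p_i)$, so that the passage of $T^{(m)}$ through the sum is unambiguous.
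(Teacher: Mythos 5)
Your proof is correct and takes essentially the same route as the paper, which deduces the proposition directly from Corollary~\ref{cor:g_N} (with $n_1=1$, $n_2=2$, so only the $r=1$ term survives and $[f_j]^{\rho_1}=f_j$), applying $T^{(m)}$ term by term to the eigenforms and reading off the coefficient of $q^n$. Your handling of the conjugate is also fine, since $a(N,[f_j]^{(k+\nu,k)})$ is rational for the chosen integral basis, so conjugation is harmless.
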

  
  Note that $\mathcal{C}_{4,k}(f)=\zeta(3-2k)\mathcal{C}_{3,k}(f)$, the following propositions follow by a simple calculation.
  
  \begin{prop}\label{prop:cond2}
    For $N \in \mathbb{H}_2(\mathbb{Z})_{>0}$, we define $e_m=\epsilon_{k,\nu}(m,1,N)$.
    Let ${f_1=f,\ldots,f_d}$ be a basis of $S_{k+\nu}\left(\Gamma_1\right)$ consist of Hecke eigenforms.
    For $m_1,\ldots, m_d \in \mathbb{Z}_{>0}$, we set $\Delta=\Delta(m_1,\ldots,m_d)=\det(\lambda_{j,m_j})$.
    Then,
    \[\Delta\gamma \mathcal{C}_{4,k}(f)\overline{a(N,[f]^{\rho_2})}=\zeta(3-2k)
    \begin{vmatrix} 
      e_{m_1} & \lambda_{2,m_1} & \dots  & \lambda_{d,m_1} \\
      \vdots & \vdots & \ddots & \vdots \\ 
      e_{m_d} & \lambda_{2,m_d} & \dots  & \lambda_{d,m_d} \\
    \end{vmatrix}.\]
  
  \end{prop}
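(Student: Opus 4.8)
The plan is to reduce the statement to a single application of Cramer's rule, using the explicit Fourier-coefficient formula for $g_N|T^{(m)}$ recorded in the preceding proposition. First I would specialize that formula to $n=1$: since $f_1,\ldots,f_d$ are normalized we have $a(1,f_j)=1$, so
\[e_m = \epsilon_{k,\nu}(m,1,N) = \gamma\sum_{j=1}^d \lambda_{j,m}\,\mathcal{C}_{3,k}(f_j)\,\overline{a(N,[f_j]^{\rho_2})}.\]
Abbreviating $c_j := \gamma\,\mathcal{C}_{3,k}(f_j)\,\overline{a(N,[f_j]^{\rho_2})} \in V_{2,\mathbf{k}}$, this reads $e_m = \sum_{j=1}^d \lambda_{j,m}\,c_j$ for every $m$, and $c_1$ is, up to the factor $\zeta(3-2k)$, exactly the quantity appearing on the left-hand side of the claim.

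Next I would evaluate this identity at the $d$ chosen indices $m_1,\ldots,m_d$ to obtain the linear system $e_{m_i} = \sum_{j=1}^d \lambda_{j,m_i}\,c_j$ for $1\leq i\leq d$, whose coefficient matrix is $(\lambda_{j,m_i})_{1\leq i,j\leq d}$ with determinant $\Delta$ (this is how I read the shorthand $\Delta=\det(\lambda_{j,m_j})$, i.e. the $d\times d$ matrix with $(i,j)$-entry $\lambda_{j,m_i}$). Applying Cramer's rule to solve for the unknown $c_1$ gives $\Delta\,c_1$ equal to the determinant of the matrix obtained by replacing the first (the $j=1$) column by the column $(e_{m_1},\ldots,e_{m_d})^{t}$, which is precisely the determinant displayed in the statement. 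Because the $c_j$ and $e_{m_i}$ are vector-valued while the $\lambda_{j,m_i}$ are scalars, Cramer's rule is applied entrywise in $V_{2,\mathbf{k}}$, or equivalently by Laplace-expanding the displayed determinant along its (vector-valued) first column.

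Finally I would substitute $f_1=f$ together with the relation $\mathcal{C}_{4,k}(f)=\zeta(3-2k)\,\mathcal{C}_{3,k}(f)$, so that $c_1 = \zeta(3-2k)^{-1}\,\gamma\,\mathcal{C}_{4,k}(f)\,\overline{a(N,[f]^{\rho_2})}$; multiplying the Cramer identity through by $\zeta(3-2k)$ then yields the asserted formula. I do not expect a genuine obstacle here, since the content is entirely linear algebra over $V_{2,\mathbf{k}}$. The only points requiring a word of care are the vector-valuedness of the coefficients, handled by the entrywise remark above, and matching the ordering conventions so that the column being replaced is indeed the $j=1$ column corresponding to $f_1=f$; I would double-check the latter before invoking Cramer's rule.
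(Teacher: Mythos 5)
Your proposal is correct and is precisely the ``simple calculation'' the paper alludes to (the paper gives no detailed proof beyond citing the Fourier-coefficient formula and the relation $\mathcal{C}_{4,k}(f)=\zeta(3-2k)\,\mathcal{C}_{3,k}(f)$): evaluating $\epsilon_{k,\nu}(m,n,N)$ at $n=1$ with $a(1,f_j)=1$ and applying the cofactor-expansion form of Cramer's rule is the intended argument. Your remark that the identity $\Delta\,c_1=\det(M_1)$ holds entrywise by Laplace expansion, hence without assuming $\Delta\neq 0$, is a correct and welcome point of care.
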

  
  \begin{cor}
    Assume that a prime ideal $\mathfrak{p}$ of $\mathbb{Q}$ satisfies
    $\mathfrak{p}>\mathrm{max}\left\{2k,k+\nu-2\right\}$ and $\gamma$ is $\mathfrak{p}$-integer.
    Suppose that $\mathfrak{p}$ divides neither $\zeta(3-2k)$ nor 
    $\begin{vmatrix} 
      e_{m_1} & \lambda_{2,m_1} & \dots  & \lambda_{d,m_1} \\
      \vdots & \vdots & \ddots & \vdots \\ 
      e_{m_d} & \lambda_{2,m_d} & \dots  & \lambda_{d,m_d} \\
    \end{vmatrix}$.
    Then
    \[\mathrm{ord}_\mathfrak{p}\left(\mathcal{C}_{4,k}(f)\overline{a(N,[f]^{\rho_2})}\right)\leq 0.\]
  \end{cor}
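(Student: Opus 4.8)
The plan is to read the corollary off directly from the identity in Proposition~\ref{prop:cond2} by applying $\mathrm{ord}_\mathfrak{p}$ to both sides and controlling the integrality of each factor. Recall that Proposition~\ref{prop:cond2} gives
\[
\Delta\,\gamma\,\mathcal{C}_{4,k}(f)\overline{a(N,[f]^{\rho_2})}
=\zeta(3-2k)
\begin{vmatrix}
e_{m_1} & \lambda_{2,m_1} & \dots  & \lambda_{d,m_1} \\
\vdots & \vdots & \ddots & \vdots \\
e_{m_d} & \lambda_{2,m_d} & \dots  & \lambda_{d,m_d}
\end{vmatrix},
\]
so it suffices to compute the $\mathfrak{p}$-order of the right-hand side and to bound from below the orders of the extraneous factors $\Delta$ and $\gamma$ appearing on the left.

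First I would establish the $\mathfrak{p}$-integrality of every quantity on the right. The Hecke eigenvalues $\lambda_{j,m}$ are algebraic integers, hence $\mathfrak{p}$-integral. For the first column, the entries $e_m=\epsilon_{k,\nu}(m,1,N)$ are Fourier coefficients of $g_N|T^{(m)}$; by Corollary~\ref{cor:g_N rationality} the hypothesis $\mathfrak{p}>\max\{2k,k+\nu-2\}$ guarantees $g_N\in (M_{\rho_{1}}(\Gamma_{1})\otimes V_{2,\mathbf{k}})(\mathbb{Z}_{(p)})$, and the Hecke operators $T^{(m)}$ preserve the $\mathfrak{p}$-integrality of Fourier coefficients, so each $e_m$ is $\mathfrak{p}$-integral. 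Consequently the determinant is a $\mathfrak{p}$-integer, and since by hypothesis $\mathfrak{p}$ does not divide it, its order is exactly $0$. Likewise $\zeta(3-2k)=-B_{2k-2}/(2k-2)$ is $\mathfrak{p}$-integral by the Von Staudt--Clausen theorem, and as $\mathfrak{p}$ does not divide it either, $\mathrm{ord}_\mathfrak{p}(\zeta(3-2k))=0$. Hence the right-hand side has $\mathrm{ord}_\mathfrak{p}=0$.

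Turning to the left-hand side, $\Delta=\det(\lambda_{j,m_j})$ is again a determinant of algebraic integers, so $\mathrm{ord}_\mathfrak{p}(\Delta)\geq 0$, and $\gamma$ is a $\mathfrak{p}$-integer by assumption, so $\mathrm{ord}_\mathfrak{p}(\gamma)\geq 0$. Taking $\mathrm{ord}_\mathfrak{p}$ of the whole identity then yields
\[
\mathrm{ord}_\mathfrak{p}\!\left(\mathcal{C}_{4,k}(f)\overline{a(N,[f]^{\rho_2})}\right)
=-\,\mathrm{ord}_\mathfrak{p}(\Delta)-\mathrm{ord}_\mathfrak{p}(\gamma)\leq 0,
\]
which is the assertion of the corollary.

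The argument presents no genuine obstacle, being essentially an order computation performed on an identity already in hand; the only point requiring care is the $\mathfrak{p}$-integrality of the entries $e_m$, which is precisely where the numerical hypothesis $\mathfrak{p}>\max\{2k,k+\nu-2\}$ enters through Corollary~\ref{cor:g_N rationality}, together with the standard fact that the operators $T^{(m)}$ carry $\mathfrak{p}$-integral forms to $\mathfrak{p}$-integral forms.
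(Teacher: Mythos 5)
Your proposal is correct and takes essentially the same approach as the paper, whose entire proof is the one-line observation that the claim follows from Proposition~\ref{prop:cond2} because $\Delta$ and $\gamma$ are $\mathfrak{p}$-integers. Your extra verification that the entries $e_m$ are $\mathfrak{p}$-integral via Corollary~\ref{cor:g_N rationality} and the $T^{(m)}$-stability of $\mathfrak{p}$-integrality merely makes explicit the role of the hypothesis $\mathfrak{p}>\max\{2k,k+\nu-2\}$ that the paper leaves implicit.
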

  \begin{proof}
    It follows from Proposition~\ref{prop:cond2} since $\Delta$ and $\gamma$ are $\mathfrak{p}$-integers.
  \end{proof}
  
  Fourier coefficients $a(T,\widetilde{E_{n,k}})$ of the Siegel-Eisenstein Series $\widetilde{E_{n,k}}(Z)$
  are considered by Katsurada \cite{katsurada1999explicit}. The remaining part of this chapter will summarize the results of the study. 
  
  We define $\chi_p(a)$ and a polynomial $\gamma_p(B,X)$
  for $a \in \mathbb{Q}_p^\times$ and a nondegenerate matrix $B \in H_n(\mathbb{Z}_p)$ by
  \begin{eqnarray*}
  \chi_p(a)&=&\left\{
    \begin{array}{lll}
    1 & (\mathbb{Q}_p(\sqrt{a})=\mathbb{Q}_p)\\
    -1 & (\mathbb{Q}_p(\sqrt{a})/\mathbb{Q}_p \text{ is quadratic unramified})\\
    0 & (\mathbb{Q}_p(\sqrt{a})/\mathbb{Q}_p \text{ is quadratic ramified})
    \end{array}
  \right.,\\
  \gamma_p(B,X)&=&\left\{
    \begin{array}{ll}
      (1-X)\prod_{i=1}^{n/2}(1-p^{2i}X^2)(1-p^{n/2}\chi_p\left((-1)^{n/2}\det B\right)X)^{-1} &(\text{$n$ is even})\\
      (1-X)\prod_{i=1}^{(n-1)/2}(1-p^{2i}X^2) &(\text{$n$ is odd})
    \end{array}
  \right..
  \end{eqnarray*}
  For $B \in H_n(\mathbb{Z})$ with $n$ even, let $\mathfrak{d}_B$ be the discriminant of $\mathbb{Q}(\sqrt{(-1)^{n/2}\det B})/\mathbb{Q}$
  and $\chi_B=(\frac{\mathfrak{d}_B}{\cdot})$ be the Kronecker character.
  
  Let $b_p(B,s)$ be the local Siegel series for an element $B \in H_n(\mathbb{Z}_p)$.
  We define the polynomial $F_p(B,X) \in \mathbb{Z}[X]$ with constant term 1 by 
  \[b_p(B,s)=\gamma_p(B,p^{-s})F_p(B,p^{-s}).\]
  
  For $T \in H_n(\mathbb{Z}_p)\backslash \left\{0\right\}$ (resp. $T \in H_n(\mathbb{Z})_{\geq 0}\backslash \left\{0\right\}$),
  there exists a nondegenerate matrix $\tilde{T} \in H_m(\mathbb{Z}_p)$ (resp. $\tilde{T} \in H_n(\mathbb{Z})_{> 0}$) such that 
  $T$ is similar to  $\begin{pmatrix} 
    \tilde{T} & O \\
    O&O
  \end{pmatrix}$ over $\mathbb{Z}_p$ (resp. $\mathbb{Z}$).
  Using this $\tilde{T}$,
  define $F_p^*(T,X) \in \mathbb{Z}[X] $ for a matrix $T \in H_n(\mathbb{Z}_p)$ by $F_p^*(T,X)=F_p(\tilde{T},X)$,
  and $\chi_T^*$ for $T \in H_n(\mathbb{Z})_{>0}$ with even rank by $\chi_T^*=\chi_{\tilde{T}}$.
  
  \begin{prop}
    Let $k \in 2\mathbb{Z}$. Assume that $k \geq (n+1)/2$ and that neither $k=(n+2)/2 \equiv 2 \mod 4$ nor $k=(n+3)/2 \equiv 2 \mod 4$.
    Then for $T \in H_n(\mathbb{Z})_{\geq 0}$ of rank $m$, we have 
    \begin{eqnarray*}
    a(T,\widetilde{E_{n,k}})&=&2^{[(m+1)/2]}\prod_{p|\det(2\tilde{T})}F_p^*(T,p^{k-m-1})\\
    &\times&  \left\{
    \begin{array}{ll}
      \prod_{i=m/2+1}^{[n/2]}\zeta(1+2i-2k)L(1+m/2-k,\chi_T^*) &(\text{$m$ is even})\\
      \prod_{i=(m+1)/2}^{[n/2]}\zeta(1+2i-2k) &(\text{$m$ is odd})
    \end{array}
    \right..
    \end{eqnarray*}
    Here we make the convention $F_p^*(T,p^{k-m-1})=1$ and $L(1+m/2-k,\chi_T^*)=\zeta(1-k)$ if $m=0$.
  
  \end{prop}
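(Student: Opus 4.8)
The plan is to derive the formula from the classical Fourier expansion of the Siegel--Eisenstein series in terms of local Siegel series, following Katsurada \cite{katsurada1999explicit}. For $\mathrm{Re}(s)$ large, the nondegenerate Fourier coefficients of $E_{n,k}(Z,s)$ factor as an archimedean confluent hypergeometric factor times the Euler product $\prod_p b_p(T,\cdot)$ of local Siegel series; the weight hypotheses are exactly those of the earlier holomorphy proposition, guaranteeing that $E_{n,k}(Z)=E_{n,k}(Z,0)$ is holomorphic, so that the Fourier expansion may legitimately be specialised to the arithmetic point. First I would treat the nondegenerate case $m=n$.

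For $T\in H_n(\mathbb{Z})_{>0}$ I would insert the factorisation $b_p(T,s)=\gamma_p(T,p^{-s})F_p(T,p^{-s})$ into the Euler product, evaluated at the arithmetic point $X=p^{k-n-1}$. The polynomials $F_p(T,X)$ assemble into the finite product $\prod_{p\mid\det(2T)}F_p(T,p^{k-n-1})$, since the factor at an unramified prime is $1$. The remaining product $\prod_p\gamma_p(T,p^{k-n-1})$ telescopes, via $\prod_p(1-p^{-w})^{-1}=\zeta(w)$ and $\prod_p(1-\chi_T^*(p)p^{-w})^{-1}=L(w,\chi_T^*)$, into a ratio of $\zeta$-values together with the single $L$-value $L(1+n/2-k,\chi_T^*)$ when $n$ is even, or a pure product of $\zeta$-values when $n$ is odd. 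The essential step is then to check that the $\zeta$-values thrown off by $\prod_p\gamma_p$ cancel exactly against the normalising factor $Z(n,k)=\zeta(1-k)\prod_{j=1}^{[n/2]}\zeta(1+2j-2k)$ and the archimedean factor at $s=0$, leaving only the surviving $L$-value and an explicit power of two.

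Next I would reduce the degenerate case $m<n$ to the nondegenerate one. Writing $T$ up to $\mathrm{GL}_n(\mathbb{Z})$-equivalence as $\mathrm{diag}(\tilde T,0)$ with $\tilde T\in H_m(\mathbb{Z})_{>0}$, I would apply the Siegel operator $\Phi^n_m$: since $\Phi^n_m(E_{n,k})$ is a multiple of the degree-$m$ Eisenstein series, the $T$-th coefficient is governed by the $\tilde T$-th coefficient of $E_{m,k}$, the extra ``trivial'' directions of $T$ contributing precisely the residual zeta factors $\prod_{i=m/2+1}^{[n/2]}\zeta(1+2i-2k)$ that no longer cancel. This is exactly what the definitions $F_p^*(T,X)=F_p(\tilde T,X)$ and $\chi_T^*=\chi_{\tilde T}$ encode, and it accounts for the truncated products over $i$ as well as the power of two $2^{[(m+1)/2]}$; the parity split reflects whether the genus of $\tilde T$ contributes the $L$-value $L(1+m/2-k,\chi_T^*)$ (when $m$ is even) or not (when $m$ is odd).

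The main obstacle I expect is the exact determination of the archimedean normalisation at $s=0$: one must evaluate Shimura's confluent hypergeometric factor at the relevant near-central point and match it, together with $Z(n,k)$, against the $\zeta$-values produced by $\prod_p\gamma_p$, thereby pinning down the constant $2^{[(m+1)/2]}$ and the precise cancellations. It is here that the exclusions ruling out $k=(n+2)/2\equiv2$ and $k=(n+3)/2\equiv2\pmod4$ are genuinely used, to avoid the poles and zeros of the archimedean factor; once this evaluation is in hand, the remaining steps are the routine Euler-product and functional-equation manipulations indicated above.
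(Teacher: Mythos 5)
The paper offers no proof of this proposition---it is quoted directly from Katsurada's explicit formula for Siegel series \cite{katsurada1999explicit}---and your sketch is exactly the argument underlying that citation: factor the nondegenerate coefficients into $\prod_p\gamma_p(T,\cdot)F_p(T,\cdot)$, use the functional equations of $\zeta$, $L(\cdot,\chi_T)$ and of $F_p$ to move to the point $p^{k-n-1}$ and cancel against $Z(n,k)$, evaluate Shimura's confluent hypergeometric factor at $s=0$ (where the excluded weights $k=(n+2)/2,\,(n+3)/2\equiv 2\bmod 4$ matter), and reduce rank-$m$ degenerate $T$ to $\tilde T$ via the Siegel operator, the leftover factors $\prod_{i}\zeta(1+2i-2k)$ arising from the ratio $Z(n,k)/Z(m,k)$. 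Your outline is correct and correctly flags the archimedean normalisation as the genuinely nontrivial step, so it matches the paper's (cited) treatment.
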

  
  We take a variables $x,y$ of $\rho_{(k+\nu,k)}\coloneqq \det^k\otimes\mathrm{Sym}^\nu$.
  (Then, $\{x^\nu,x^{\nu-1}y,\ldots,y^\nu\}$ is a basis of $\det^k\otimes\mathrm{Sym}^\nu$.)
  We put $v={}^t(x,y)$, and $r(n,N,R)=\mathop{\mathrm{rank}}\begin{pmatrix}
    n &R/2\\ 
    {}^t\!R/2 & N
  \end{pmatrix}\in\left\{2,3\right\}$.
  We can easily find the following proposition.
  
  \begin{prop}
    \[
      \epsilon_{k,\nu}(n,N)(v)
      =\sum_{R\in M_{1,2}(\mathbb{Z})}a\left(T_{(n,N,R)},\widetilde{E_{3,k}}\right)\cdot P_{2k,\nu}(R/2\cdot v,n{}^t\!vNv),
    \]
  where $T_{(n,N,R)}=\begin{pmatrix}
      n &R/2\\ 
      {}^t\!R/2 & N
    \end{pmatrix}$ and $P_{2k,\nu}$ is the Gegenbauer polynomial.
  \end{prop}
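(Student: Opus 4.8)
The plan is to unwind the definition of $\epsilon_{k,\nu}(n,N)$ back to the Fourier expansion of the Siegel--Eisenstein series and apply the differential operator $\mathbb{D}_{k,\nu}$ term by term. First I would write $Z=\left(\begin{smallmatrix}Z_1 & Z_{12}\\ {}^tZ_{12} & Z_2\end{smallmatrix}\right)\in\mathbb{H}_3$ with $Z_1\in\mathbb{H}_1$, $Z_2\in\mathbb{H}_2$, $Z_{12}\in M_{1,2}(\mathbb{C})$, and expand $\widetilde{E_{3,k}}(Z)=\sum_{T}a(T,\widetilde{E_{3,k}})\mathbf{e}(\mathrm{tr}(TZ))$. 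Parametrizing each $T\in H_3(\mathbb{Z})_{\geq0}$ by its blocks as $T=T_{(n,N,R)}$ with $n\in\mathbb{Z}$, $N\in H_2(\mathbb{Z})$, $R\in M_{1,2}(\mathbb{Z})$, a block computation gives $\mathrm{tr}(T_{(n,N,R)}Z)=nZ_1+\sum_{j}R_j z^{(12)}_{1,j}+\mathrm{tr}(NZ_2)$, so the off-diagonal variable $Z_{12}$ sees exactly the block $R$.

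The key point is that $\mathbf{e}(\mathrm{tr}(TZ))$ is an eigenfunction of every constant-coefficient operator appearing in $\mathbb{D}_{k,\nu}$. Using the matrix-derivative conventions fixed in the text, I would check the eigenvalue relations $\frac{\partial}{\partial Z_1}\mathbf{e}(\mathrm{tr}(TZ))=2\pi\sqrt{-1}\,n\,\mathbf{e}(\mathrm{tr}(TZ))$, $\frac{\partial}{\partial Z_2}\mathbf{e}(\mathrm{tr}(TZ))=2\pi\sqrt{-1}\,N\,\mathbf{e}(\mathrm{tr}(TZ))$, and $\frac{\partial}{\partial Z_{12}}\mathbf{e}(\mathrm{tr}(TZ))=\pi\sqrt{-1}\,R\,\mathbf{e}(\mathrm{tr}(TZ))$, the extra $\tfrac12$ in the last relation accounting for the $\tfrac12$ in the definition of $\partial/\partial Z_{12}$. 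Substituting these into the explicit form $\mathbb{D}_{k,\nu}=P_{2k,\nu}\!\left(u\frac{\partial}{\partial Z_{12}}{}^tv,\bigl(u\frac{\partial}{\partial Z_1}{}^tu\bigr)\bigl(v\frac{\partial}{\partial Z_2}{}^tv\bigr)\right)$ and identifying the single $\mathrm{GL}_1$-monomial $u^\nu$ with the generator of $V_{1,\mathbf{k}}\cong\mathbb{C}$ (equivalently setting $u=1$), the two arguments of $P_{2k,\nu}$ become $2\pi\sqrt{-1}\,(R/2\cdot v)$ and $(2\pi\sqrt{-1})^2\,(n\,{}^tvNv)$ respectively.

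Now I would invoke the isobaric homogeneity of the Gegenbauer polynomial: from $P_{2k,\nu}(s,m)=\sum_{\mu}(-1)^\mu\frac{(k-1)_{\nu-\mu}}{(\nu-2\mu)!\,\mu!}(2s)^{\nu-2\mu}m^\mu$ one reads off $P_{2k,\nu}(\lambda s,\lambda^2 m)=\lambda^\nu P_{2k,\nu}(s,m)$. Taking $\lambda=2\pi\sqrt{-1}$, the operator $\mathbb{D}_{k,\nu}$ multiplies each Fourier term by $(2\pi\sqrt{-1})^\nu P_{2k,\nu}(R/2\cdot v,\,n\,{}^tvNv)$, so the factor $(2\pi\sqrt{-1})^{-\nu}$ in the definition of $\mathcal{E}$ cancels these powers. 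Restricting to $Z_{12}=0$ kills the off-diagonal exponential and sends $\mathbf{e}(\mathrm{tr}(T_{(n,N,R)}Z))\mapsto q^n\mathbf{e}(\mathrm{tr}(NZ_2))$; collecting the coefficient of $q^n\mathbf{e}(\mathrm{tr}(NZ_2))$ in $\mathcal{E}(Z_1,Z_2)=\sum_N g_N(Z_1)\mathbf{e}(\mathrm{tr}(NZ_2))$, and then the coefficient of $q^n$ in $g_N$, yields the asserted formula. The sum over $R$ is finite because $a(T,\widetilde{E_{3,k}})=0$ unless $T\geq0$, which for fixed $n>0$ and $N>0$ bounds $R$ through the Schur complement $\tfrac14 RN^{-1}{}^tR\leq n$.

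The computation carries no conceptual obstacle; the only genuine difficulty is bookkeeping. The delicate points I expect to spend time on are: pinning down every power of $2\pi\sqrt{-1}$ together with the factors of $\tfrac12$ arising from the $\frac{1+\delta_{i,j}}{2}$ and $\tfrac12$ normalizations of $\partial/\partial Z_m$ and $\partial/\partial Z_{12}$; matching the row-vector operator variables $u,v$ with the column vector $v={}^t(x,y)$ and the representation $V_{2,\mathbf{k}}\cong\mathrm{Sym}^\nu$ of the statement; and tracking the residual scalar so that the overall constant in the normalization $\nu!(2\pi\sqrt{-1})^{-\nu}$ reduces to exactly the one displayed, the $\nu!$ being absorbed by the $\mathrm{Sym}^\nu$-identification of the $\mathrm{GL}_1$-variable.
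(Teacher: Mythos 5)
Your proposal is essentially the argument the paper has in mind: the paper offers no proof at all (it introduces the proposition with ``We can easily find\ldots''), and the intended verification is exactly your direct computation --- expand $\widetilde{E_{3,k}}$ in Fourier series, observe that each exponential $\mathbf{e}(\mathrm{tr}(T_{(n,N,R)}Z))$ is an eigenfunction of the constant-coefficient operators $u\frac{\partial}{\partial Z_{12}}{}^tv$ and $\bigl(u\frac{\partial}{\partial Z_1}{}^tu\bigr)\bigl(v\frac{\partial}{\partial Z_2}{}^tv\bigr)$ with eigenvalues $2\pi\sqrt{-1}\,u(R/2)\,{}^tv$ and $(2\pi\sqrt{-1})^2\,n\,u^2(vN{}^tv)$, use the isobaric homogeneity $P_{2k,\nu}(\lambda s,\lambda^2 m)=\lambda^\nu P_{2k,\nu}(s,m)$ to pull out $(2\pi\sqrt{-1})^\nu$, restrict to $Z_{12}=0$, and collect coefficients. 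Your eigenvalue relations (including the $\tfrac{1}{2}$ from the normalization of $\partial/\partial Z_{12}$), the block-trace identity, and the finiteness of the $R$-sum via the Schur complement $\tfrac14 RN^{-1}{}^tR\leq n$ are all correct.

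The one soft spot is your final claim that the $\nu!$ is ``absorbed by the $\mathrm{Sym}^\nu$-identification of the $\mathrm{GL}_1$-variable.'' That is not justified: the $u$-dependence of the output is exactly $u^\nu$, and the natural identification $u^\nu\mapsto 1$ contributes no factorial. Carried out literally with the paper's stated normalizations ($\mathcal{E}=\nu!(2\pi\sqrt{-1})^{-\nu}\mathbb{D}\widetilde{E_{3,k}}$ and $\mathbb{D}_{k,\nu}=P_{2k,\nu}(\cdots)$), your computation yields $\epsilon_{k,\nu}(n,N)(v)=\nu!\sum_{R}a(T_{(n,N,R)},\widetilde{E_{3,k}})P_{2k,\nu}(R/2\cdot v,\,n\,{}^tvNv)$, i.e.\ the displayed identity holds only up to the factor $\nu!$. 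This discrepancy traces to the paper's loose bookkeeping (the operator $\mathbb{D}_{\nu,n_1,n_2}$ in the definition of $\mathcal{E}$ is never pinned down relative to the $P_{2k,\nu}$-normalization, and is plausibly meant with a $1/\nu!$), and it is harmless for every application in the paper since condition (3) forces $p\geq 2(k+\nu)-3>\nu$, making $\nu!$ a $p$-unit; but in your write-up you should either state the identity up to this explicit factor or fix the normalization of $\mathbb{D}$ honestly, rather than assert the absorption.
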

  
  \section{Examples}
  \subsection{$(k,\nu)=(14,2)$}
  It is known that $\mathrm{dim}\ S_{16}(\Gamma_1)=\mathrm{dim}\ S_{(16,14)}(\Gamma_2)=1$, and we take
  normalized Hecke eigenforms $\Delta_{16} \in S_{16}(\Gamma_1)$
  and $\Delta_{16,14} \in S_{(16,2)}(\Gamma_2)$.
  We can calculate that
  \[\mathbb{L}(13,\Delta_{16},\mathrm{St})=\frac{2^{20}\cdot3^4\cdot373}{7}.\]
  Therefore 373 is the only prime that satisfies the condition(3) in Theorem\ref{thm:main}.
  Let $n=1,N=\begin{pmatrix}
    1&0\\
    0&1
  \end{pmatrix}$, we get
  \begin{eqnarray*}
  \epsilon_{14,2}\left(1,\begin{pmatrix}
    1&0\\
    0&1
  \end{pmatrix}\right)\left(\begin{pmatrix}
    1\\
    0
  \end{pmatrix}\right)&=&-5291173154072\neq 0 \mod 373,\\
  \gamma&=&-\frac{91}{2147483648} \neq 0 \mod 373.
  \end{eqnarray*}
  By theorem\ref{thm:main} we prove the following theorem.
  \begin{thm}
    \[\Delta_{16,14}\equiv_{ev}[\Delta_{16}]^{(16,14)} \mod 373. \]
    In particular,
    \[\lambda_{\Delta_{16,14}}(p) \equiv (1+p^{12})\lambda_{\Delta_{16}}(p) \mod 373. \]
  
  \end{thm}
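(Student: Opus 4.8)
The plan is to apply the main theorem (Theorem~\ref{thm:main}) with $f=\Delta_{16}$, $k=14$, $\nu=2$ and the prime $p=373$, and then to identify the resulting eigenform $G$ by a dimension count. Condition~(3) is immediate, since $p=373\geq 2(k+\nu)-3=29$. For condition~(1), I would compute $\mathbb{L}(13,\Delta_{16},\mathrm{St})$ via its expression as a Petersson inner product against the Cohen--Zagier cusp form $C_{16,13}$ recorded in the Condition~(1) subsection; this yields $\mathbb{L}(13,\Delta_{16},\mathrm{St})=2^{20}\cdot 3^{4}\cdot 373/7$, so that $\mathrm{ord}_{373}(\mathbb{L}(13,\Delta_{16},\mathrm{St}))=1=:\alpha>0$. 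This is precisely the computation that singles out $373$: the only numerator primes are $2,3,373$, and of these only $373$ clears the lower bound of condition~(3).

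The heart of the matter is condition~(2). Because $\dim S_{16}(\Gamma_1)=1$, I would invoke Proposition~\ref{prop:cond2} in the degenerate case $d=1$, where the determinant collapses to the single entry $e_{m_1}=\epsilon_{14,2}(m_1,1,N)$; taking $m_1=1$ and $N=\left(\begin{smallmatrix}1&0\\0&1\end{smallmatrix}\right)$ gives $\Delta=\lambda_{1,1}=1$, so that
\[
\gamma\,\mathcal{C}_{4,14}(\Delta_{16})\,\overline{a(N,[\Delta_{16}]^{\rho_2})}
=\zeta(3-2k)\,\epsilon_{14,2}(1,1,N).
\]
The vector $\epsilon_{14,2}(1,1,N)=\epsilon_{14,2}(1,N)$ is the $q^{1}$-coefficient of $g_N$, whose $x^{\nu}$-component (the evaluation at $v={}^t(1,0)$) is computed numerically to be $\epsilon_{14,2}(1,N)({}^t(1,0))=-5291173154072\not\equiv 0\bmod 373$. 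Since $\gamma=-91/2^{31}$ is a $373$-unit and $\zeta(3-2k)=\zeta(-25)=-B_{26}/26$ is a $373$-unit (by von Staudt--Clausen its denominator is $6$, and $373\nmid 8553103$), the corresponding component of $\mathcal{C}_{4,14}(\Delta_{16})\,a(N,[\Delta_{16}]^{\rho_2})$ is a $373$-unit; as all components are $373$-integral by Corollary~\ref{cor:g_N rationality} (applicable since $p>\max\{2k,k+\nu-2\}=28$), the minimal order over the components is exactly $0$, which is condition~(2).

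Theorem~\ref{thm:main} then produces a Hecke eigenform $G\in M_{(16,14)}(\Gamma_2)$, not proportional to $[\Delta_{16}]^{(16,14)}$, with $[\Delta_{16}]^{(16,14)}\equiv_{ev}G\bmod\mathfrak{p}$ for some $\mathfrak{p}\mid 373$. To pin down $G$ I would use that the Hecke operators preserve the decomposition $M_{(16,14)}(\Gamma_2)=S_{(16,14)}(\Gamma_2)\oplus N_{(16,14)}(\Gamma_2)$: by Arakawa's isomorphism (Proposition~\ref{prop:arakawa1}) the space $N_{(16,14)}(\Gamma_2)\cong S_{16}(\Gamma_1)$ is one-dimensional, spanned by $[\Delta_{16}]^{(16,14)}$, while $S_{(16,14)}(\Gamma_2)$ is one-dimensional, spanned by $\Delta_{16,14}$. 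Hence the only Hecke eigenline off $\mathbb{C}\,[\Delta_{16}]^{(16,14)}$ is $\mathbb{C}\,\Delta_{16,14}$, forcing $G=\Delta_{16,14}$ up to scalar; as all Hecke fields here equal $\mathbb{Q}$ we may take $\mathfrak{p}=(373)$, giving $\Delta_{16,14}\equiv_{ev}[\Delta_{16}]^{(16,14)}\bmod 373$. The ``in particular'' statement then follows by specializing this congruence to $T(p)$ and substituting $\lambda_{[\Delta_{16}]^{(16,14)}}(p)=(1+p^{k-2})\lambda_{\Delta_{16}}(p)=(1+p^{12})\lambda_{\Delta_{16}}(p)$ from Proposition~\ref{prop:arakawa2}. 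I expect condition~(2) to be the delicate step: one must simultaneously confirm the numerical non-vanishing of the explicit integer modulo $373$ and the $373$-integrality of every component, so that the valuation is pinned to $0$ rather than merely bounded above by it.
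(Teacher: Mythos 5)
Your proposal is correct and follows essentially the same route as the paper: verify condition (1) via the Cohen--Zagier computation of $\mathbb{L}(13,\Delta_{16},\mathrm{St})=2^{20}\cdot3^4\cdot373/7$, verify condition (2) through the Section 6 machinery (Proposition~\ref{prop:cond2} with $d=1$, the $373$-unit values of $\gamma$ and $\zeta(-25)$, the integer $-5291173154072\not\equiv0\bmod373$, and $373$-integrality from Corollary~\ref{cor:g_N rationality}), and then apply Theorem~\ref{thm:main} and identify $G=\Delta_{16,14}$ by the one-dimensionality of $S_{(16,14)}(\Gamma_2)$ and $N_{(16,14)}(\Gamma_2)$. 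Your write-up in fact makes explicit two steps the paper leaves implicit (the von Staudt--Clausen check that $\zeta(-25)$ is a $373$-unit, and the dimension-count identification of $G$ via Propositions~\ref{prop:arakawa1} and~\ref{prop:arakawa2}), but the argument is the same.
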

  
  \subsection{$(k,\nu)=(8,8)$}
  As previous subsection, $\mathrm{dim}\ S_{16}(\Gamma_1)=\mathrm{dim}S_{(16,8)}(\Gamma_2)=1$, and we take
  normalized Hecke eigenforms $\Delta_{16,8} \in S_{(16,8)}(\Gamma_2)$.
  We can calculate that
  \[\mathbb{L}(7,\Delta_{16},\mathrm{St})=\frac{2^{15}\cdot23^2}{11\cdot13}.\]
  Therefore 23 is the only prime that satisfies the condition(3) in Theorem\ref{thm:main}.
  Let $n=1,N=\begin{pmatrix}
    1&0\\
    0&1
  \end{pmatrix}$, we get
  \begin{eqnarray*}
  \epsilon_{8,8}\left(1,\begin{pmatrix}
    1&0\\
    0&1
  \end{pmatrix}\right)\left(\begin{pmatrix}
    1\\
    1
  \end{pmatrix}\right)&=&-46666368\neq 0 \mod 23,\\
  \gamma&=&-\frac{945945}{2143483648} \neq 0 \mod 23.
  \end{eqnarray*}
  Noting that $\mathrm{dim}\ S_{16}(\Gamma_1)=1$,
  by a simple improvement of Lemma~\ref{lem:cong} and the first half of theorem~\ref{thm:main},
  we can prove the following theorem without using the second half of the theorem~\ref{thm:main}.
  \begin{thm}\label{example}
    \[\Delta_{16,8}\equiv_{ev}[\Delta_{16}]^{(16,8)} \mod 23^2. \]
    In particular,
    \[\lambda_{\Delta_{16,8}}(p) \equiv (1+p^{6})\lambda_{\Delta_{16}}(p) \mod 23^2. \]
  
  \end{thm}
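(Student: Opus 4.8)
The plan is to run the first half of the proof of Theorem~\ref{thm:main} for $(k,\nu)=(8,8)$ and $p=23$, and then to feed the resulting two-term decomposition into a sharpened form of Lemma~\ref{lem:cong} that outputs a congruence modulo $\mathfrak{p}^{\alpha}$ rather than merely modulo $\mathfrak{p}$; this route avoids the integrality machinery of the second half of Theorem~\ref{thm:main}. First I would set $\alpha=\mathrm{ord}_{23}(\mathbb{L}(7,\Delta_{16},\mathrm{St}))$, which equals $2$ from the displayed value $\mathbb{L}(7,\Delta_{16},\mathrm{St})=2^{15}\cdot 23^2/(11\cdot 13)$, so condition $(1)$ holds with $\alpha=2$. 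For condition $(2)$ I would use $\mathcal{C}_{4,k}(f)=\zeta(3-2k)\mathcal{C}_{3,k}(f)$ together with the $n_1=1$ expansion of Section~6.2: since $d_1=1$ this reads $\mathcal{C}_{4,k}(f)\,a(1_2,[f]^{\rho_2})=\zeta(3-2k)\,\epsilon_{8,8}(1,1_2)/\gamma$ up to Galois conjugation, and because $\mathrm{ord}_{23}(\zeta(-13))=0$ while the displayed values $\epsilon_{8,8}(1,1_2)\bigl({}^t(1,1)\bigr)=-46666368$ and $\gamma=-945945/2143483648$ are both $23$-units, this gives $\mathrm{ord}_{23}(\mathcal{C}_{4,k}(f)a(1_2,[f]^{\rho_2}))=0$. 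Finally, $\gamma=\gamma_1$ is a $23$-unit, so condition $(3)'$ applies and holds because $23\geq\max\{2k,k+\nu-2\}=16$.

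The structural input that makes the prime-power congruence available is $\dim S_{16}(\Gamma_1)=\dim S_{(16,8)}(\Gamma_2)=1$. Applying Corollary~\ref{cor:g_N} with $n_1=n_2=2$ and $N=A:=1_2$, the pullback coefficient $g^{(2)}_A\in\bigl(M_{\rho_2}(\Gamma_2)\otimes V_{2,\mathbf{k}}\bigr)(\mathbb{Z}_{(23)})$, which is $23$-integral by Corollary~\ref{cor:g_N rationality} since $23>16$, decomposes as
\[g^{(2)}_A=c_1F_1+c_2F_2,\qquad c_1=\gamma_1\mathcal{C}_{4,k}(f)a(A,[f]^{\rho_2}),\]
with only two Hecke eigenforms occurring: $F_1=[\Delta_{16}]^{\rho_2}$ (the $r=1$ term, as $d_1=1$) and $F_2=\Delta_{16,8}$ (the $r=2$ term, as $d_2=1$ and $[f_{2,1}]^{\rho_2}=f_{2,1}$). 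The order computation in the proof of Theorem~\ref{thm:main}, combined with $\mathrm{ord}_{23}(\zeta(3-2k))=0$ here, gives $\mathrm{ord}_{\mathfrak{p}}(c_1a(A,F_1))\leq-\alpha$ (in fact $=-\alpha=-2$).

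The improvement of Lemma~\ref{lem:cong} is elementary once only two forms appear. For each $T\in\mathcal{H}_2$ the form
\[\lambda_{F_2}(T)\,g^{(2)}_A-T\,g^{(2)}_A=\bigl(\lambda_{F_2}(T)-\lambda_{F_1}(T)\bigr)c_1F_1\]
is $23$-integral, because $g^{(2)}_A$ is $23$-integral, $\mathcal{H}_2$ preserves $23$-integrality of Fourier coefficients, and $\lambda_{F_2}(T)\in\mathbb{Z}$ (here $\dim S_{(16,8)}(\Gamma_2)=1$ forces $\mathbb{Q}(\Delta_{16,8})=\mathbb{Q}$). Reading off the Fourier coefficient at $A$ yields $\mathrm{ord}_{\mathfrak{p}}\bigl((\lambda_{F_2}(T)-\lambda_{F_1}(T))\,c_1a(A,F_1)\bigr)\geq 0$, and combining with $\mathrm{ord}_{\mathfrak{p}}(c_1a(A,F_1))\leq-\alpha$ gives $\lambda_{F_2}(T)\equiv\lambda_{F_1}(T)\bmod\mathfrak{p}^{\alpha}$ for every $T$. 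Since $\mathbb{Q}(\Delta_{16,8})=\mathbb{Q}$ we have $\mathfrak{p}=(23)$ and $\mathfrak{p}^{\alpha}=(23^2)$, so this is exactly $\Delta_{16,8}\equiv_{ev}[\Delta_{16}]^{(16,8)}\bmod 23^2$; the displayed eigenvalue congruence then follows from Arakawa's formula $\lambda_{[f]^{(k+\nu,k)}}(p)=(1+p^{k-2})\lambda_f(p)$ of Proposition~\ref{prop:arakawa2} with $k-2=6$.

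I expect the main obstacle to be the bookkeeping that upgrades the modulus from $\mathfrak{p}$ to $\mathfrak{p}^{\alpha}$: everything hinges on $\mathrm{ord}_{\mathfrak{p}}(c_1a(A,F_1))\leq-\alpha$ holding with the full power $\alpha=2$, which requires that the auxiliary factors $\zeta(3-2k)$ and $\gamma_1$ contribute nothing to the $23$-order. The binary structure of the decomposition, forced by the two one-dimensional spaces, is precisely what lets the $\mathfrak{p}$-pole of $c_1F_1$ be cancelled by a single cuspidal form and thereby transfers the entire order $\alpha$ into the eigenvalue congruence; with more forms present one would be thrown back on the weaker conclusion of Lemma~\ref{lem:cong} or on the integrality arguments of the second half of Theorem~\ref{thm:main}.
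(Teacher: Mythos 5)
Your proposal is correct and is precisely the route the paper intends: the paper's ``proof'' is only the remark that the theorem follows from the displayed numerics ($\alpha=2$, the $23$-unit values of $\epsilon_{8,8}(1,1_2)$ and $\gamma$), the first half of Theorem~\ref{thm:main}, and ``a simple improvement of Lemma~\ref{lem:cong}'' exploiting $\dim S_{16}(\Gamma_1)=\dim S_{(16,8)}(\Gamma_2)=1$. Your two-term elimination $\lambda_{F_2}(T)\,g^{(2)}_A-T\,g^{(2)}_A=(\lambda_{F_2}(T)-\lambda_{F_1}(T))c_1F_1$, combined with $\mathrm{ord}_{23}(c_1a(A,F_1))=-\alpha$, is exactly that improvement, so you have merely (and correctly) filled in the details the paper leaves implicit.
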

  
  \section{Acknowledgment}
  The author would like to thank T. Ikeda for his great guidance and support as my supervisor,
  and T. Ibukiyama and H. Katsurada for their many suggestions and comments.

  \bibliography{kuromizu}
  \bibliographystyle{plain}
  
  \end{document}